\DeclareMathOperator*{\argmin}{\arg\!\min}
\crefname{equation}{}{}
\crefname{section}{section}{sections}
\crefname{figure}{figure}{figures}
\crefname{table}{table}{tables}
\crefname{example}{example}{examples}
\crefname{proposition}{proposition}{propositions}
\Crefname{section}{Section}{Sections}
\Crefname{figure}{Figure}{Figures}
\Crefname{table}{Table}{Tables}
\Crefname{definition}{Definition}{Definitions}
\Crefname{theorem}{Theorem}{Theorems}
\Crefname{remark}{Remark}{Remarks}
\Crefname{example}{Example}{Examples}
\Crefname{proposition}{Proposition}{Propositions}
\numberwithin{equation}{section}
\newtheorem{definition}{Definition}[section]
\newtheorem{theorem}[definition]{Theorem}
\newtheorem{lemma}[definition]{Lemma}
\newtheorem{corollary}[definition]{Corollary}
\newtheorem{proposition}[definition]{Proposition}
\newtheorem{claim}[definition]{Claim}
\newtheorem{remark}[definition]{Remark}
\newcommand{\C}{\mathbb{C}}
\newcommand{\R}{\mathbb{R}}
\newcommand{\N}{\mathbb{N}}
\title{Reconstruction of frequency-localized functions from pointwise samples via least squares and deep learning}
\author{A. Martina Neuman$^1$, Andr\'es Felipe Lerma Pineda$^1$, \\
Jason J. Bramburger$^2$, Simone Brugiapaglia$^2$}
\date{\small $^1$Faculty of Mathematics, University of Vienna, Vienna, Austria\\
$^2$Department of Mathematics and Statistics, Concordia University, Montr\'eal, QC, Canada}
\begin{document}

\maketitle

\abstract{Recovering frequency-localized functions from pointwise data is a fundamental task in signal processing. We examine this problem from an approximation-theoretic perspective, focusing on least squares and deep learning-based methods. First, we establish a novel recovery theorem for least squares approximations using the Slepian basis from uniform random samples in low dimensions, explicitly tracking the dependence of the bandwidth on the sampling complexity. Building on these results, we then present a recovery guarantee for approximating bandlimited functions via deep learning from pointwise data. This result, framed as a practical existence theorem, provides conditions on the network architecture, training procedure, and data acquisition sufficient for accurate approximation. To complement our theoretical findings, we perform numerical comparisons between least squares and deep learning for approximating one- and two-dimensional functions. We conclude with a discussion of the theoretical limitations and the practical gaps between theory and implementation.}



\maketitle

\section{Introduction}

The study of frequency-localized functions is fundamental to signal processing and sampling theory. For example, reconstructing bandlimited functions from pointwise samples is the object of the classical Nyquist-Shannon sampling theorem, which establishes that a bandlimited signal can be reconstructed if the sampling rate is twice its bandwidth \citep{jerri1977shannon}. Frequency-localized and bandlimited signals are ubiquitous in engineering and signal processing, with far-reaching applications from geophysical signal estimation \citep{simons2010slepian}, radar imaging \citep{chen2002highly}, radio transmission \citep{cabric2004implementation}, and compressed sensing \citep{tropp2009beyond}.

Motivated by these applications, we study the approximation of multi-dimensional frequency-localized functions from uniform random samples using both least squares regression and deep learning. Specifically, we consider least squares reconstruction methods based on the \emph{Slepian basis}, or the \emph{prolate spheroidal wave functions (PSWFs)} \citep{osipov2013prolate}. 
Introduced by Slepian et al. at Bell Laboratories \citep{slepian1961prolate, slepian1964prolate, slepian1965some}, this orthogonal basis consists of functions optimally concentrated in both space (or time) and frequency \citep{moore2004prolate, simons2010slepian, walter2003sampling, xiao2001prolate}, making it particularly effective for approximating frequency-localized or bandlimited functions confined to a bounded spatial domain. Continued research on the Slepian basis has underscored its distinctive mathematical properties, as reviewed in \citep{wang2017review}. We include a brief discussion on the numerical benefits of using Slepian functions in applications at the end of Section~\ref{sec:bandlimited}.



Building on recent advances in deep learning, we further consider the approximation of smooth functions via \emph{neural networks (NNs)}. Here, linear expansions in the Slepian basis are replaced by nonlinear artificial NNs. This approach is motivated by the demonstrated capability of NNs to learn piecewise smooth functions \citep{eckle2019comparison,petersen2018optimal}, with established dimension-independent approximation rates in the Fourier \citep{barron1993universal} and Radon domains \citep{parhi2022near}. These results strongly suggest that deep learning can provide an efficient means of modeling frequency-localized functions in applications requiring high spatial resolution.

Our contribution to recovering frequency-localized functions via deep learning falls within the class of approximation theorems known as \emph{practical existence theorems (PETs)}, as conceived in \citep{adcock2021gap} and further developed in \citep{adcock2022deep}; see also \citep{adcock2024learning} for a review. 
Traditional universal approximation theorems establish the existence of NNs capable of approximating functions from a given target class with arbitrary accuracy, provided certain conditions on network architecture are met. By leveraging results from sampling and approximation theories, PETs deliver recovery guarantees for trained NNs, contingent on sufficient conditions related to the training data and optimization strategy. Recent developments in PETs include using NNs to approximate orthogonal polynomials for basis expansions in Hilbert spaces \citep{daws2019analysis, de2021approximation, opschoor2022exponential}, realizing spectral Fourier collocation approximations to PDE solutions through physics-informed NNs \citep{brugiapaglia2024physics}, and applying convolutional auto-encoders for reduced-order modeling \citep{franco2025practical}.


In this work, we prove a PET for approximating frequency-localized, bandlimited functions, using an intermediate step that represents Slepian basis functions through Legendre polynomials, which can in turn be modeled by NNs \citep{opschoor2022exponential}.
We build on results from \citep{adcock2022deep}, leveraging previously established findings on least squares approximation sample complexities (see, e.g., \citep[Chapter~5]{adcock2022sparse} and references therein). This means that theoretically our NNs are assumed to be partially pre-trained. A closely related field of study to these ideas is \emph{transfer learning} \citep{pan2009survey}, where some layers of a previously trained model are fine-tuned on new data to solve a similar task. Specifically, this method of fine-tuning (or retraining) the last NN layer has been shown to improve the accuracy of neural networks when dealing with spurious features in the data \citep{kirichenko2022last,strombergenhancing}.

\subsection{Main contributions}

Our main theoretical contributions in this paper can be found in Section~\ref{sec:main_results} and are summarized as follows. 

First, Theorem~\ref{thm:leastsquaresampling} provides a sufficient condition on the number of random samples required for least squares approximation methods to accurately recover frequency-localized functions defined over the hypercube $[-1,1]^d$, where $d = 1, 2, 3$, using an expansion in Slepian functions of bandwidth $\mathsf{w} \geq 1$. The linear expansion is supported on a \emph{hyperbolic cross}, a popular multi-index set used in multi-variate function approximation \citep{dung2018hyperbolic}. Specifically, we demonstrate that to recover the best approximation error in a hyperbolic cross of order $n$, it is sufficient to collect $n^{\gamma(\mathsf{w})}$ random samples, up to a constant and a logarithmic factor, where $\gamma(\mathsf{w}) = \lceil \log_2(24\mathsf{w}) \rceil$. 


Second, Theorem~\ref{thm:ApproxSpan} shows that assuming the same sample complexity bound as in Theorem~\ref{thm:leastsquaresampling} it is possible to accurately approximate frequency-localized functions using deep neural networks (NNs). In particular, this finding demonstrates the existence of a class of trainable neural networks that can be fitted to the given data by solving a least squares-type optimization problem. A key auxiliary result in our proof is Proposition~\ref{prop:ApproxSlepian}, which provides a NN construction of Slepian functions.

Third, in Section~\ref{sec:numerics}, we run a numerical comparison between least squares and deep learning for approximating frequency-localized functions in dimensions one and two. 
Our experiments show that both techniques exhibit convergence in approximation error as the number of samples increases while also underscoring the influence of the function's domain on their performance. Moreover, we propose a Slepian-based initialization strategy inspired by the proof of Theorem~\ref{thm:ApproxSpan} and show that it can outperform standard random initializations in NN training.

\subsection{Paper outline}

After providing some essential background notions and illustrating the problem setting in Section~\ref{sec:background}, we state our theoretical guarantees for least squares and deep learning in Section~\ref{sec:main_results}. Our theory is complemented by several numerical illustrations, carried out in Section~\ref{sec:numerics}. Section~\ref{sec:proofs} contains the proofs of the theoretical results stated in Section~\ref{sec:main_results} and constitutes the main technical core of the paper. We conclude by discussing the limitations of our results and directions of future work in Section~\ref{s:conclusions}. Appendix~\ref{appx:basicnetworks} contains auxiliary results on neural networks needed for some of the proofs in Section~\ref{sec:proofs}.

\section{Setup} \label{sec:background}

In this section we provide the appropriate background material to present our results. Precisely, in Section~\ref{sec:bandlimited} we provide an overview of the relevant literature on bandlimited functions, in turn leading up to the definition of Slepian functions. Then, in Section~\ref{sec:problem} we describe the framework for the least squares approximation and formally define the class of NNs considered in this paper. Before proceeding, we note that throughout this paper we denote $\N_0:=\N\cup\{0\}$ as the set of non-negative integers. For a vector $\vec{v} = (v_j)_{j=1}^n\in\mathbb{C}^n$, we will make use of the (Euclidean) $\ell^2$-norm and the so-called $\ell^0$-norm of $\vec{v}$, defined as $\|\vec{v}\|_2 := (\sum_{j=1}^n |v_j|^2)^{1/2}$ and $\|\vec{v}\|_0 := \texttt{\#} \{ j: v_j \neq 0  \}$, respectively. Moreover, by treating a matrix $A\in\mathbb{R}^{m\times n}$ as a vector, we write $\|A\|_2$ to denote its Frobenius norm.

\subsection{Bandlimited and Slepian functions} 
\label{sec:bandlimited}

Given a function $f:\R^{d}\to\mathbb{C}$, its Fourier transform is defined to be
\begin{equation*} 
    \hat{f}(\vec{v}):=\int_{\R^{d}} f(\vec{x})e^{-i\vec{v}\cdot\vec{x}}\,{\rm d}\vec{x}, \qquad \forall \vec{v}\in\R^d, 
\end{equation*}
with inverse transform 
\begin{equation} \label{inverseF}
    \widecheck{f}(\vec{x}):= \frac{1}{(2\pi)^d}\int_{\R^{d}} f(\vec{v})e^{i\vec{v}\cdot\vec{x}}\,{\rm d}\vec{v}, \qquad \forall \vec{x}\in\R^d,
\end{equation}
whenever either exists. 
The Fourier transform is known to be a unitary map on $L^2(\R^d)$.
Within this space lies a linear subspace of bandlimited functions, distinguished by their compactly supported Fourier transforms, as formalized in the following definition.

\begin{definition} Let $\mathsf{w}>0$. A function  $f\in L^2(\R^d)$ is said to be \emph{$\mathsf{w}$-bandlimited} if $\hat{f}=0$ for almost every $x\in\R^d\setminus [-\mathsf{w},\mathsf{w}]^d$. Equivalently,
\begin{equation} \label{eq:bandlimitedequiv}
    f(\vec{x}) = \frac{1}{(2\pi)^d} \int_{[-\mathsf{w},\mathsf{w}]^d} \hat{f}(\vec{v})e^{i\vec{v}\cdot\vec{x}}\,{\rm d}\vec{v}.
\end{equation}
\end{definition}

When $d = 1$ the space of $\mathsf{w}$-bandlimited functions is referred to as the \emph{Paley-Wiener space} \citep[Section~2.2]{zayed2018advances}, denoted by
\begin{equation*}
    {\rm PW}_\mathsf{w} := \{f\in L^2(\R): \mathrm{supp}(\hat{f})\subset [-\mathsf{w},\mathsf{w}]\}.
\end{equation*}
The uncertainty principle asserts that no nontrivial $f\in L^2(\R)$ can have both the set $\{x: f(x)\neq 0\}$ and the set $\{v: \hat{f}(v)\neq 0\}$ confined to finite measures (see, e.g., \citep[Proposition~8]{amrein1977support} or \citep[Theorem~2]{benedicks1985fourier}). 
Nevertheless, the ``Bell Labs theory'' presents a framework for obtaining functions that achieve optimal concentration in both the time and frequency domains, as we now describe in greater detail. 

\paragraph{The Slepian basis in one dimension.} 
For $T,\mathsf{w}>0$, we define the following two restriction operators on $L^2(\mathbb{R})$, using the notation in \eqref{inverseF}, as
\begin{equation*}
    P_\mathsf{w} f := (\hat{f}\mathbbm{1}_{[-\mathsf{w},\mathsf{w}]})\, \widecheck{} \quad \text{ and } \quad Q_{T} f := f\mathbbm{1}_{[-T,T]},
\end{equation*}
where $\mathbbm{1}_A$ denotes the indicator function for a set $A\subset\R$. It is readily verified that $P_\mathsf{w}(L^2(\R)) = {\rm PW}_{\mathsf{w}}$. 
Since no nontrivial $f \in L^2(\R)$ remains invariant under both $P_\mathsf{w}$ and $Q_T$, an alternative approach is to assess the extent to which the energy of a function in ${\rm PW}_{\mathsf{w}}$ is concentrated on $[-T,T]$.
Particularly, we consider the operator $P_\mathsf{w}Q_T$, that is,
\begin{equation*}
    P_\mathsf{w}Q_Tf(x) = \frac{1}{2\pi}\int_{-\mathsf{w}}^{\mathsf{w}} \int_{-T}^T f(t)e^{-itv}e^{ivx}\,{\rm d}t{\rm d}v, \qquad \forall x\in\R.
\end{equation*}
The operator $P_\mathsf{w}Q_T$ is a Hilbert-Schmidt, hence compact, operator on ${\rm PW}_{\mathsf{w}}$ (see, e.g., \citep[Section~3.3.3]{hogan2005time}).
A quick calculation shows that $P_\mathsf{w}Q_Tf = P_{T\mathsf{w}}Q_1 f(T\cdot)$ for all $f \in L^2(\R)$, and hence without loss of generality we may set $T=1$ and simply define $\mathcal{Q}_{\mathsf{w}} := P_\mathsf{w}Q_1$. From a straightforward calculation, we can equivalently write
\begin{equation} \label{eqdef:Qw}
     \mathcal{Q}_{\mathsf{w}}f(x) = P_\mathsf{w}Q_1f(x)= \int_{-1}^1 f(t)\,\frac{\sin\mathsf{w}(x-t)}{\pi(x-t)}\,{\rm d}t, \qquad \forall x\in\R.
\end{equation}
We are particularly interested in the eigenfunctions of $\mathcal{Q}_{\mathsf{w}}$, i.e., the solutions to the homogeneous Fredholm integral equation of the second kind, $\mathcal{Q}_{\mathsf{w}} f = \mu f$, on ${\rm PW}_{\mathsf{w}}$.
It follows that the corresponding eigenvalue roughly indicates ``how much'' of an eigenfunction is fixed by $\mathcal{Q}_{\mathsf{w}}$.
The spectrum of $\mathcal{Q}_{\mathsf{w}}$ is countable and accumulates at $0$; we denote these eigenvalues as
\begin{equation} \label{eq:muj}
    1> \mu_{\mathsf{w},0} >\mu_{\mathsf{w},1}>\dots> 0,
\end{equation}
where $\lim_{j\to\infty} \mu_{ \mathsf{w}, j} = 0$ \citep[Section~3.2.1]{osipov2013prolate}. 
An orthogonal basis for the space $L^2_{\bf u}([-1,1])$, which is the $L^2$-Lebesgue space on $[-1,1]$ with norm calibrated to the uniform measure ${\bf u}$ on $[-1,1]$, can be generated using the eigenfunctions of $\mathcal{Q}_{\mathsf{w}}$.
Specifically, for $j \geq 0$, let $\varphi_{\mathsf{w},j}\in{\rm PW}_{\mathsf{w}}\cap L_{{\bf u}}^2([-1,1])$ to be the \emph{normalized} eigenfunction associated with $\mu_{\mathsf{w},j}$, i.e., $\mathcal{Q}_{\mathsf{w}}\varphi_{\mathsf{w},j} = \mu_{\mathsf{w},j}\varphi_{\mathsf{w},j}$ and
\begin{equation*} 
    \|\varphi_{\mathsf{w},j}\|^2_{L^2_{{\bf u}}[-1,1]} = \int_{-1}^1 |\varphi_{\mathsf{w},j}(x)|^2 \, {\rm d}{\bf u}(x) = 1.
\end{equation*}
We refer to the set $\{\varphi_{\mathsf{w},j}\}_{j\in\N_0}$ of normalized eigenfunctions, the Slepian basis functions (the prolate spheroidal wave functions), illustrated in Figure~\ref{fig:PSAF} for $\mathsf{w}=16$.
From \eqref{eq:muj}, it is apparent that the function in ${\rm PW}_{\mathsf{w}}$ that minimizes the energy loss most effectively when first time-limited to $[-1,1]$ and subsequently bandlimited to $[-\mathsf{w},\mathsf{w}]$, is $\varphi_{\mathsf{w},0}$. 
Thus, heuristically, a function is interpreted as being $(1,\mathsf{w})$-time-frequency localized if it is a finite linear combination of eigenfunctions $\varphi_{\mathsf{w},j}$ of $\mathcal{Q}_{\mathsf{w}}$ corresponding to eigenvalues $\mu_{\mathsf{w},j}$ close to $1$, or equivalently, those with low indices $j$.

\begin{figure}[t]
    \centering
    \includegraphics[width = 0.49\textwidth]{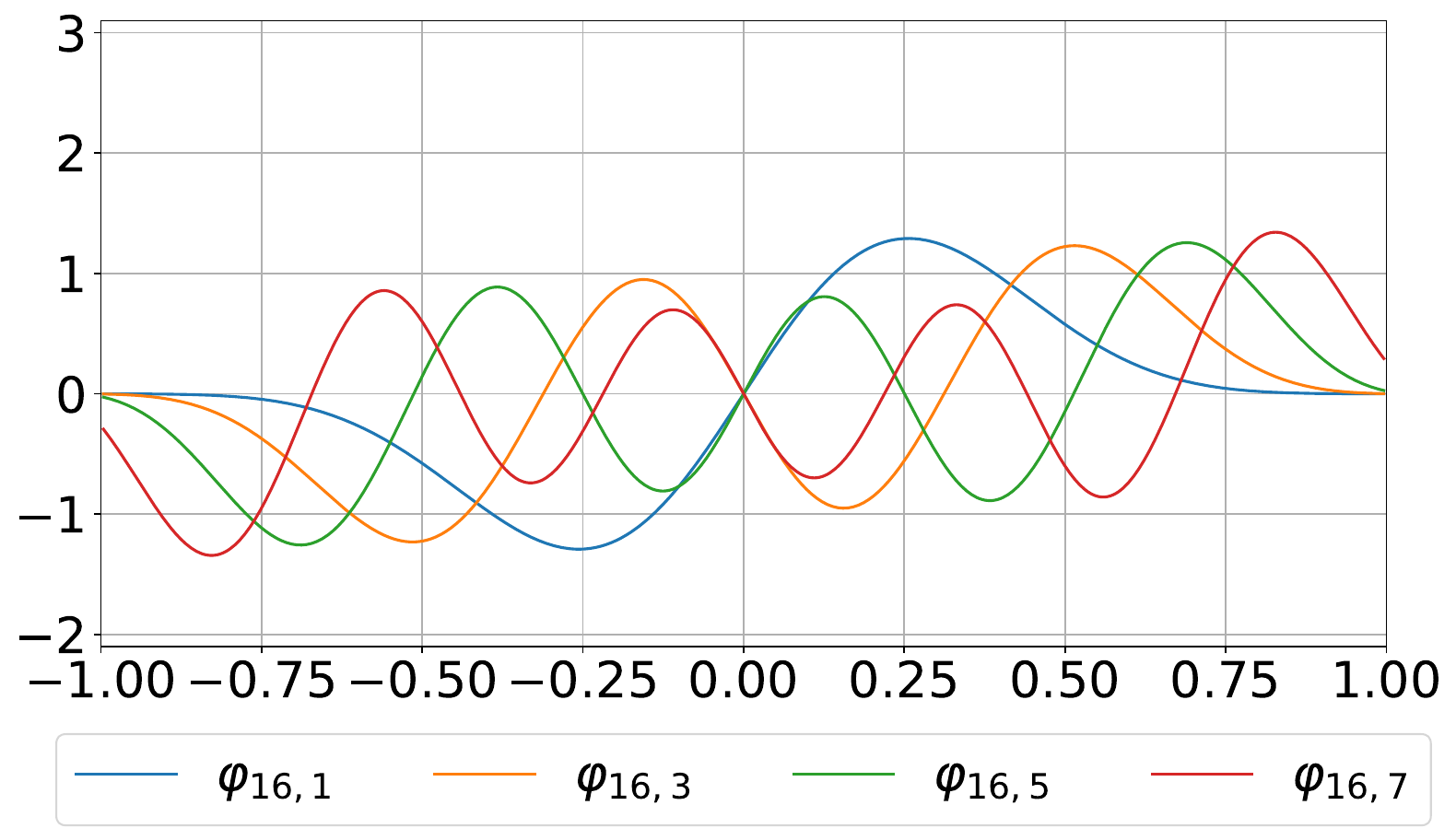} 
    \includegraphics[width = 0.49\textwidth]{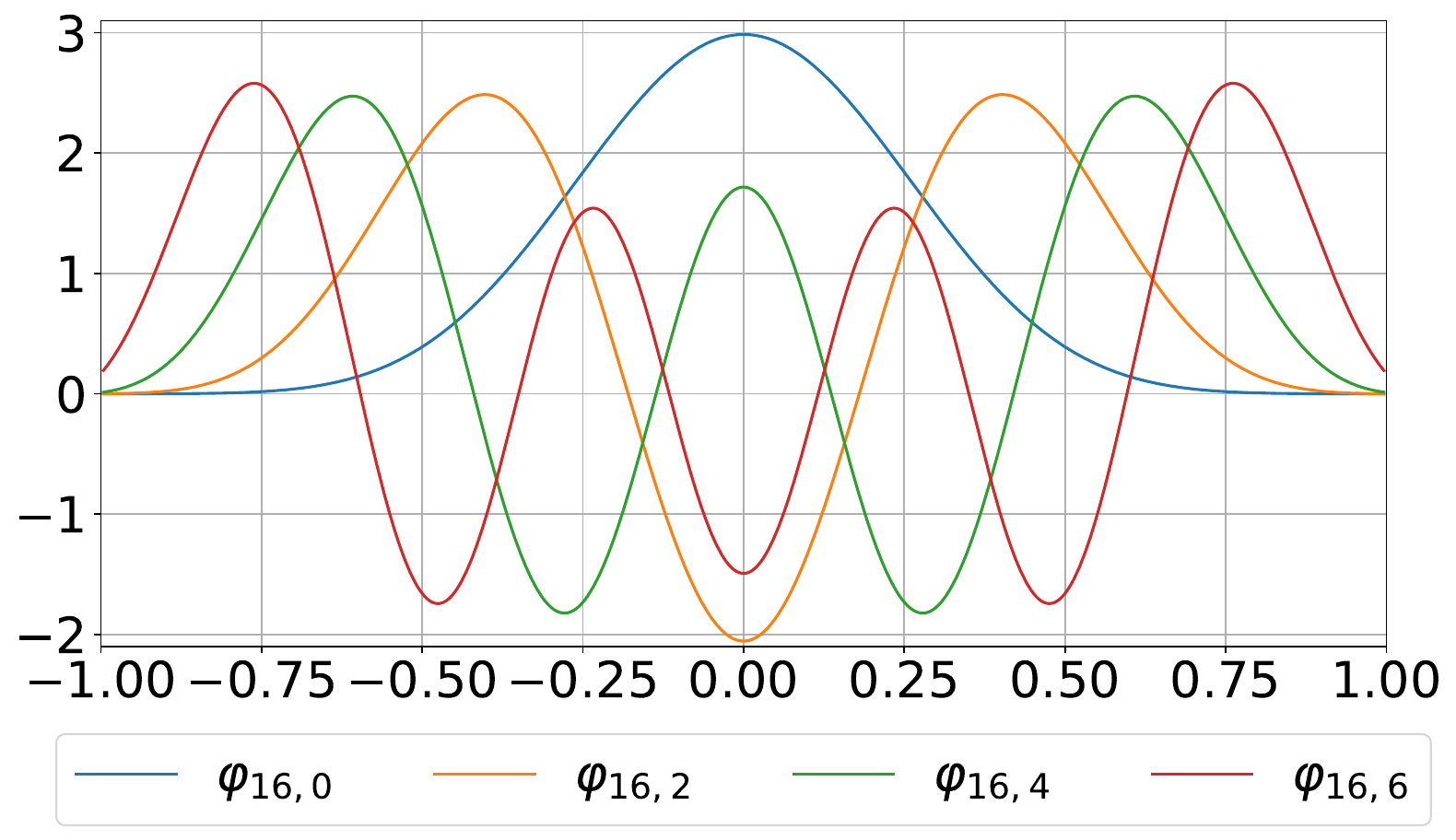}
    \caption{Plot of Slepian functions (PSWFs) $\varphi_{16,j}$ with $\mathsf{w} = 16$ for odd (left) and even (right) indices $j = 0,1,2,3,4,5,6,7$.} 
    \label{fig:PSAF} 
\end{figure}

An alternative way to arrive at the Slepian basis comes in the form of solutions to a Sturm-Louville equation. Precisely, consider the second-order boundary value problem
\begin{equation} \label{eqdef:Lw}
    \mathcal{L}_{\mathsf{w}}f(x) := -\frac{d}{dx}\bigg((1-x^2)\frac{df}{dx}(x)\bigg) + \mathsf{w}^2x^2f(x), \qquad \forall x\in (-1,1).
\end{equation}
The normalized eigenfunctions of this problem are exactly the PSWFs, $\mathcal{L}_{\mathsf{w}}\varphi_{\mathsf{w},j} = \chi_{\mathsf{w},j}\varphi_{\mathsf{w},j}$, but now with the eigenvalues satisfy the ordering
\begin{equation} \label{eq:spectrumchi}
    0< \chi_{\mathsf{w},0} < \chi_{\mathsf{w},1} <\cdots
\end{equation}
with $\lim_{j\to\infty} \chi_{\mathsf{w},j} = \infty$. The progression of the spectrum \eqref{eq:spectrumchi} will play a key role in our subsequent analysis of the $L^{\infty}$-norm of a Slepian basis function on $[-1,1]$, a prerequisite for the proofs of the main results.

\paragraph{The Slepian basis in $d$ dimensions.}
To produce a multi-dimensional Slepian basis, we will simply tensorize the one-dimensional basis. That is, for $d \geq 2$ we construct a $d$-dimensional Slepian basis function as the $d$-tensorization of its one-dimensional counterparts
\begin{equation} \label{tensor}
    \varphi_{\mathsf{w},\vec{\nu}} := \varphi_{\mathsf{w},\nu_1} \otimes \cdots \otimes \varphi_{\mathsf{w},\nu_d},
\end{equation}
for a multi-index $\vec{\nu}=(\nu_1,\dots,\nu_d)\in\N^d_0$. Continuing to denote ${\bf u}$ to be the uniform measure on $[-1,1]^d$, it follows from the preceding discussion that $\{\varphi_{\mathsf{w},\vec{\nu}}\}_{\vec{\nu}\in\N_0^d}$ forms an orthonormal basis of $L^2_{{\bf u}}([-1,1]^d)$, which we simply refer to as the Slepian basis in $d$ dimensions.

\begin{remark}[Practical advantages of Slepian representations]
Slepian functions are specifically designed for applications involving spatiospectral concentration, such as spectral estimation or problems constrained by geometry. This reflects a fundamental principle in approximation theory: an approximation is most accurate and efficient when the basis mirrors the intrinsic structure of the target space. The practical utility of Slepian functions and their variants (Slepian-like bases) is well documented. For instance, \citep{simons2006spatiospectral} employed them to obtain leakage-controlled spectral estimates of geophysical data localized to Earth’s continents and oceans, in contrast to spherical harmonics, whose energy is distributed globally.

Associated with a spatiospectral concentration problem is its \textit{Shannon number}, which quantifies the effective dimensionality of the space of functions simultaneously localized in space (or time) and frequency. This number is known to be roughly proportional to the product between the spatial (or temporal) and frequency domain areas. Remarkably, a defining feature of the corresponding Slepian basis is the rapid decay of their eigenvalues (see e.g. \eqref{eq:muj}) beyond a certain index, which in one dimension has been rigorously shown to closely align with the Shannon number (see \citep[Corollary 3]{karnik2021improved} or \eqref{eq:mu_bounds}). As a result, only the first few Slepian functions contribute significantly, while the remainder have negligible impact, enabling sparse and efficient representations of similarly localized functions. This behavior naturally yields a dimensionality reduction determined entirely by the domain, to which the Slepian basis is inherently adapted. In contrast, classical bases such as Chebyshev or Legendre polynomials provide no intrinsic mechanism for such compression.

Moreover, Slepian functions offer notable advantages in spectral and spectral-element methods for nearly bandlimited, highly oscillatory solutions, primarily due to their nearly uniform oscillatory behavior, which induces quasi-uniform spatial grids. This uniformity improves spatial resolution and allows for longer stable time steps, giving the Slepian basis a distinct advantage in applications such as weather prediction. By contrast, 
Chebyshev and Legendre polynomials have non-uniform oscillations that significantly cluster near the boundaries, limiting their efficiency in high-resolution computations \citep{boyd2004prolate}.
Similarly, for eigenvalue problems \citep[Section~5.3]{wang2017review}, Slepian-based collocation or Galerkin methods deliver superior resolution of oscillatory waves compared with traditional polynomial bases, requiring fewer points per wavelength to achieve a given accuracy.
\end{remark}

\subsection{Problem setting} \label{sec:problem}

To further initialize the work in this paper, in this subsection we present the main elements of our problem setting. To remain consistent with the relevant literature, the notation adopted here is analogous to that of \citep[Section~5]{adcock2022sparse}. 
Letting $m\in\mathbb{N}$, we seek to approximate a continuous function $f\in \mathcal{C}([-1,1]^d)$ 
from noisy samples
\begin{equation*}
    f(\vec{y}_1) + \eta_1,\  \dots,\ f(\vec{y}_m) + \eta_m,
\end{equation*}
where $\vec{y}_1,\dots, \vec{y}_m \in [-1,1]^d$ are random sample points drawn independently from the uniform measure ${\bf u}$. Note that the assumption $f \in \mathcal{C}([-1,1]^d)$ is only made for pointwise evaluations of $f$ to be well-defined, but it can be weakened to, for example, having $f$ be piecewise continuous. Defining the noise vector as $\vec{e}:=\frac{1}{\sqrt{m}}(\eta_j)_{j=1}^m$, we assume throughout a 
deterministic noise model, meaning that the $\eta_j$'s are arbitrary complex numbers.

First, we approximate $f$ using a least squares fit estimator from a hypothesis set composed of linear combinations of Slepian basis functions. 
Let $\Lambda\subset\N_0^d$ be a finite index set, whose cardinality is denoted as $\texttt{\#}\Lambda$. Then, for a fixed $\mathsf{w}>0$, we consider the orthonormal Slepian system $\{\varphi_{\mathsf{w},\vec{\nu}}\}_{\vec{\nu}\in\Lambda}$ whose linear span $\mathcal{S}_{\Lambda}$ constitutes a $(\texttt{\#}\Lambda)$-dimensional subspace in $L_{{\bf u}}^2([-1,1]^d)$.
Supposing $m\geq\texttt{\#}\Lambda$, we approximate $f$ through $f^{\natural}$, the least squares fit among $\mathcal{S}_{\Lambda}$, given by
\begin{equation} \label{leastsquaresproblem}
    f^{\natural} \in \argmin_{g\in\mathcal{S}_{\Lambda}} 
    \frac{1}{m} \sum_{j=1}^m |g(\vec{y}_j) - f(\vec{y}_j) - \eta_j|^2.
\end{equation}
The index set $\Lambda$ will be chosen strategically for our analysis, specifically utilizing the hyperbolic cross, as defined below.

\begin{definition} \label{def:HC}
Let $d,n\in\N$. The $d$-dimensional hyperbolic cross index set of order $n$ is defined to be
\begin{equation*}
    \Lambda^{\rm HC}_{n-1} := \bigg\{\vec{\nu}\in\N_0^{d}: \prod_{k=1}^{d}(\nu_k+1)\leq n\bigg\}.        
\end{equation*}
\end{definition}
To maintain a compact presentation, we omit the dimension $d$ in the notation $\Lambda^{\rm HC}_{n-1}$, allowing it to be inferred from the context.\footnote{Among its desirable properties, the hyperbolic cross has a well-understood cardinality: $\texttt{\#}\Lambda^{\rm HC}_{n-1}\sim n (\log n)^{d-1}/(d-1)!$ for $n \to \infty$ and any fixed $d \in \mathbb{N}$ (see \citep{dobrovol1998number}). A nonasymptotic bound can be found in Lemma~\ref{lem:HCsize}.} 
Continuing, we highlight two key properties of a hyperbolic cross index set. 
From \citep[Lemma~5.15]{adcock2022sparse}, we have the following \emph{slicing property}:\footnote{Note that in this union, the level $k$ stops at $n-1$. Moreover---although we do not use this explicitly---the sets $\Lambda_k$ are \emph{nested}, meaning $\Lambda_j\subset \Lambda_k$ if $j\geq k$. This property, in turn, implies the monotonicity property \eqref{nesting}.}
\begin{equation} \label{slicing}
    \Lambda^{\rm HC}_{n-1} = \bigsqcup_{k=0}^{n-1} \{k\}\times \Lambda_k, 
\end{equation}
where $\bigsqcup$ denotes the disjoint union, and $\Lambda_k := \{(\nu_2,\dots,\nu_d): (k,\nu_2,\dots,\nu_d) \in\Lambda^{\rm HC}_{n-1}\}$. Moreover, for $0\leq k\leq j\leq n-1$, we further have the \emph{monotonicity property}:
\begin{equation} \label{nesting}
    \texttt{\#} \Lambda_j \leq \texttt{\#}\Lambda_k.
\end{equation}

Second, we establish the existence of a class of NNs capable of approximating the Slepian basis functions.
To this end, we introduce the formal definition of an NN, as follows.

\begin{definition}[{\citep[Definition~2.1]{PetV2018OptApproxReLU}}] \label{def:NeuralNetworks}
Let $d, L \in \N$, and $d_1, \dots, d_{L} \in \N$, and $\varrho: \R \to \R$ be given, referred to as an \emph{activation function}. A \emph{neural network (NN)} $\Phi$ with input dimension $d$, output dimension $d_L$, and comprised of $L$ layers is a sequence of matrix-vector pairs
\begin{equation*}
    \Phi = \big((A_1,b_1), \dots, (A_L, b_L)\big), 
\end{equation*}
where $A_l \in \R^{d_l\times d_{l-1}}$ and $b_l \in \R^{d_l}$, for $l =1,\dots,L$, with $d_0 := d$.

Moreover, the \emph{realization} of $\Phi$ is a function ${\rm R}(\Phi): \R^d \to \R^{d_L}$, mapping an input $x\in\R^d$ to an output ${\rm R}(\Phi)(x)\in\R^{d_L}$, defined by the  sequence
\begin{align} \label{eq:NetworkScheme}
    \nonumber x_0 &\equiv x, \\
    x_{l} &:= \varrho(A_{l} \, x_{l-1} + b_l) \quad \text{ for }\quad l = 1, \dots, L-1,\\
    \nonumber x_L &:= A_{L} \, x_{L-1} + b_{L} \equiv {\rm R}(\Phi)(x).
\end{align}
In \eqref{eq:NetworkScheme}, $\varrho$ acts component-wise on vector inputs. We refer to $L(\Phi):= L$ as the \emph{depth} 
and $W(\Phi) := \sum_{j=1}^L \| A_j\|_{0} + \| b_j \|_{0}$ as the \emph{size} of $\Phi$. 
\end{definition}

In this paper, we fix the activation function to be the ReLU function $\varrho(x) = \max \{x,0\}$. The resulting neural networks are referred to as ReLU NNs.

\section{Main results}\label{sec:main_results}

In this section we provide the two main results of our paper: the least squares approximation result (Theorem~\ref{thm:leastsquaresampling}) and a PET (Theorem~\ref{thm:ApproxSpan}). In what follows, a universal constant 
refers to a constant that is independent of any other parameters or hyperparameters of the problem. Additionally, to streamline the presentation, for a bandwidth $\mathsf{w}\geq 1$ we define
\begin{equation} \label{eqdef:gammaw}
    \gamma(\mathsf{w}):= \lceil \log_2(24\mathsf{w}) \rceil,
\end{equation}
to be used in the ensuing statements. We begin in Section~\ref{subsec:LSresult} with our least squares approximation result. Then, our PET is presented in Section~\ref{subsec:LSresult}. We conclude in Section~\ref{ss:more_LS_results} with two additional least squares corollaries.

\subsection{Least squares approximation} \label{subsec:LSresult}

Our first result provides an error bound for the least squares problem \eqref{leastsquaresproblem} of approximating a continuous function 
using Slepian basis functions 
indexed over a hyperbolic cross.
A proof can be found in Section~\ref{sec:leastsquaresproof}.

\begin{theorem}[Recovery guarantee for least squares]  \label{thm:leastsquaresampling}
Let $\beta,\delta\in (0,1)$, $d=1,2,3$, and $\Lambda=\Lambda^{\rm HC}_{n-1}$ be the $d$-dimensional hyperbolic cross of order $n$, where $n\in\mathbb{N}$ if $d=1,2$, and $n\geq 26$ if $d=3$.
For $\mathsf{w}\geq 1$, let $\mathcal{S}_{\Lambda}$ be the linear span of $\{\varphi_{\mathsf{w},\vec{\nu}}\}_{\vec{\nu}\in\Lambda}$ and let $\vec{y}_1, \dots, \vec{y}_m$ be random sample points drawn independently from the uniform measure $\bf u$ on $[-1,1]^d$. 
If 
\begin{equation} 
\label{eq:sample_complexity_LS}
    m \geq ((1-\delta)\log(1-\delta)+\delta)^{-1} (2\texttt{\#}\Lambda)^{2\gamma(\mathsf{w})} \log(\texttt{\#}\Lambda/\beta),
\end{equation}
then, with probability at least $1-\beta$, for every $f \in \mathcal{C}([-1,1]^d)$ 
and every noise vector $\vec{e} = \frac{1}{\sqrt{m}} (\eta_j)_{j=1}^m \in \C^m$, the least squares problem \eqref{leastsquaresproblem} has a unique solution $f^{\natural}\in\mathcal{S}_{\Lambda}$ satisfying 
\begin{equation}\label{eq:AcFinal}
    \| f - f^{\natural} \|_{L_{ {\bf u}}^2([-1,1]^d)} 
    \leq 
    \left(1+ \dfrac{1}{\sqrt{1-\delta}} \right)\inf_{g \in \mathcal{S}_{\Lambda}} \| f - g \|_{L^{\infty}([-1,1]^d)} 
    + \dfrac{1}{\sqrt{1-\delta}} \| \vec{e} \|_2.
\end{equation}
\end{theorem}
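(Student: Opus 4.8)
The plan is to reduce the entire statement to a single high-probability event on which the empirical Gram matrix of the Slepian system is well-conditioned, and then to run the standard accuracy-and-stability argument for least squares on that event. Write $N := \texttt{\#}\Lambda$ and introduce the random design matrix $A \in \C^{m \times N}$ with entries $A_{j,\vec{\nu}} = \tfrac{1}{\sqrt m}\varphi_{\mathsf{w},\vec{\nu}}(\vec{y}_j)$, so that \eqref{leastsquaresproblem} is equivalent to the algebraic problem $\min_{\vec{c}} \|A\vec{c} - \vec{b}\|_2$ with $\vec{b} = \tfrac{1}{\sqrt m}(f(\vec{y}_j)+\eta_j)_{j=1}^m$. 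Since $\{\varphi_{\mathsf{w},\vec{\nu}}\}_{\vec{\nu}\in\Lambda}$ is orthonormal in $L^2_{\bf u}([-1,1]^d)$ and the $\vec{y}_j$ are i.i.d.\ uniform, each rank-one summand of $A^*A$ has expectation equal to the identity, i.e.\ $\EE[A^*A] = I$. Everything then hinges on controlling the deviation $\|A^*A - I\|_2$.

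The natural tool is a lower-tail matrix Chernoff bound applied to the i.i.d.\ positive semidefinite summands $X_j := \tfrac{1}{m}\vec{\varphi}(\vec{y}_j)\vec{\varphi}(\vec{y}_j)^*$, where $\vec{\varphi}(\vec{y}) := (\varphi_{\mathsf{w},\vec{\nu}}(\vec{y}))_{\vec{\nu}\in\Lambda}$. The uniform bound entering the estimate is the Christoffel-type quantity
\[
    K(\Lambda) := \sup_{\vec{x}\in[-1,1]^d}\ \sum_{\vec{\nu}\in\Lambda}|\varphi_{\mathsf{w},\vec{\nu}}(\vec{x})|^2,
\]
since $\lambda_{\max}(X_j) \le K(\Lambda)/m$ almost surely and $\lambda_{\min}(\EE[A^*A]) = 1$. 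The matrix Chernoff bound then yields $\lambda_{\min}(A^*A) \ge 1-\delta$ with probability at least $1 - N\,[e^{-\delta}(1-\delta)^{-(1-\delta)}]^{m/K(\Lambda)}$; since $\log[e^{-\delta}(1-\delta)^{-(1-\delta)}] = -((1-\delta)\log(1-\delta)+\delta)$, requiring this failure probability to be at most $\beta$ reproduces exactly the constant and the shape of the sample-complexity condition \eqref{eq:sample_complexity_LS}, provided $K(\Lambda) \le (2N)^{2\gamma(\mathsf{w})}$.

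Matching \eqref{eq:sample_complexity_LS} therefore reduces to the estimate $K(\Lambda) \le (2N)^{2\gamma(\mathsf{w})}$, and this is where the real work — and the dimensional restriction $d \le 3$ — resides. I would first establish a sharp $L^\infty$ bound on a single Slepian function $\|\varphi_{\mathsf{w},j}\|_{L^\infty([-1,1])}$, growing polynomially in the index $j$ with an exponent governed by $\gamma(\mathsf{w})$; this is precisely where the Sturm-Liouville characterization \eqref{eqdef:Lw} and the eigenvalue growth \eqref{eq:spectrumchi} enter (conveniently accessed through a Legendre-polynomial expansion of $\varphi_{\mathsf{w},j}$). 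For $d=1$, summing this per-index bound over $\Lambda = \{0,\dots,n-1\}$ gives the result directly. For $d=2,3$, the tensor-product structure \eqref{tensor} turns $K(\Lambda)$ into a sum of products over the hyperbolic cross, which I would control through the slicing property \eqref{slicing} and the monotonicity property \eqref{nesting}; the combinatorics of these nested slices is what forces $d \le 3$ (and $n \ge 26$ when $d = 3$) in order to preserve the clean exponent $2\gamma(\mathsf{w})$. I expect this $L^\infty$ bound on Slepian functions, together with its careful aggregation across the hyperbolic cross, to be the main obstacle.

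Finally, on the event $\lambda_{\min}(A^*A) \ge 1-\delta$ the matrix $A$ has full column rank, so \eqref{leastsquaresproblem} admits the unique solution $f^\natural$. To derive \eqref{eq:AcFinal} I would invoke the standard accuracy-and-stability estimate: fixing a near-best $L^\infty$ approximant $g^\star \in \mathcal{S}_\Lambda$ of $f$, I split $f - f^\natural = (f - g^\star) + (g^\star - f^\natural)$, bound the first summand by $\inf_{g\in\mathcal{S}_\Lambda}\|f-g\|_{L^\infty}$, and bound the second using that the least singular value of $A$ is at least $\sqrt{1-\delta}$ together with the fact that the empirical $\ell^2$ semi-norm of $f-g^\star$ is dominated by $\|f-g^\star\|_{L^\infty}$. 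The factor $1/\sqrt{1-\delta}$ is exactly the reciprocal of this least singular value, and the noise term $\|\vec{e}\|_2$ enters through the data residual, yielding \eqref{eq:AcFinal}. This concluding step is essentially a black-box application of the least squares framework of \citep[Chapter~5]{adcock2022sparse}.
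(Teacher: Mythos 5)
Your proposal is correct and follows essentially the same route as the paper: the stability event comes from a matrix Chernoff bound governed by the Christoffel quantity (Theorem~\ref{thm:LS}, adapted from \citep[Theorem~5.7]{adcock2022sparse}), the core technical work is the bound $\kappa(\mathcal{S}_\Lambda)\le(2\texttt{\#}\Lambda)^{2\gamma(\mathsf{w})}$ obtained from per-index $L^{\infty}$ bounds on Slepian functions aggregated over the hyperbolic cross via the slicing and monotonicity properties (Propositions~\ref{prop:gammaw}, \ref{prop:Theta}, and~\ref{prop:relating}), and the final error bound is the standard accuracy-and-stability estimate of \citep[Corollary~5.9]{adcock2022sparse}. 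The only minor discrepancy is your aside that the per-index $L^{\infty}$ bound is accessed via a Legendre expansion; the paper instead splits at the critical index $\lfloor 4\mathsf{w}\rfloor-1$, using Landau's spectral-gap estimate for low indices and the Sturm--Liouville boundary-maximum results of Osipov and Bonami--Karoui for high indices, but this does not change the substance of the argument.
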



The error estimate in Theorem~\ref{thm:leastsquaresampling} provides an example of a \emph{uniform recovery guarantee}, meaning that a single draw of sample points is sufficient to recover any function $f \in \mathcal{C}([-1,1]^d)$ 
with high probability \textit{up to the bound given in} \eqref{eq:AcFinal}. 
The number of samples required in \eqref{eq:sample_complexity_LS} scales as $(2\texttt{\#}\Lambda)^{2\gamma(\mathsf{w})}$, up to a constant and a logarithmic factor.
The corresponding $L^2_{\bf u}$-error is dominated by the $L^\infty$-distance of $f$ from the given Slepian basis and the $\ell^2$-norm of the noise vector, up to multiplicative constants. 
Consequently, while Theorem~\ref{thm:leastsquaresampling} applies for any continuous function $f$, it is most informative for frequency-localized functions that are well-approximated by the Slepian span of $\{\varphi_{\mathsf{w}, \vec{\nu}}\}_{\vec{\nu} \in \Lambda}$. For these functions, the approximation error term $\inf_{g \in \mathcal{S}_{\Lambda}} \|f - g\|_{L^{\infty}([-1,1]^d)}$ is minimal, making the bound in \eqref{eq:AcFinal} conclusive, and ensuring that pointwise samples suffice for accurate recovery.
Particularly, $\mathsf{w}$-bandlimited functions that are exactly linear combinations of elements in $\{\varphi_{\mathsf{w},\vec{\nu}}\}_{\vec{\nu}\in\Lambda}$ can be recovered with accuracy proportional to the noise level $\|\vec{e}\|_2$. 

Further recovery guarantees for least squares are presented in Section~\ref{ss:more_LS_results}.

\begin{remark}[Comparison with polynomial least squares approximation theory] \label{rem:compare}
Most of the literature on function approximation via least squares from random samples employs orthogonal polynomials (e.g., Legendre or Chebyshev) as the approximation basis---see the key contributions \citep{chkifa2015discrete, cohen2013stability, cohen2017optimal,migliorati2014analysis, migliorati2013polynomial} and the surveys \citep{adcock2022sparse, cohen2015approximation} for further references. The proof of Theorem~\ref{thm:leastsquaresampling} follows the same strategy used in the polynomial least squares literature, which relies on the study of the so-called Christoffel function---a key topic of Section~\ref{sec:leastsquaresproof}. These polynomial least squares approximation results yield sample complexity bounds that are, at first glance, more favorable than \eqref{eq:sample_complexity_LS}. For example, under the same assumptions as in Theorem~\ref{thm:leastsquaresampling} it can be shown that (up to a logarithmic factor) a number of random samples $m$ scaling like $(\texttt{\#}\Lambda)^2$ is sufficient to recover the best approximation $L^\infty$-error of $f$ with respect to Legendre polynomials. A similar result holds for Chebyshev polynomials and random samples distributed according to the arcsine measure with a scaling proportional to $(\texttt{\#}\Lambda)^{\log_3(2)} \approx (\texttt{\#}\Lambda)^{1.58}$ (up to a logarithmic factor). Moreover, resorting to weighted least squares and picking the random samples according to the so-called Christoffel sampling, one obtains a log-linear sampling complexity (scaling linearly in $\texttt{\#}\Lambda$ up to a logarithmic factor) \citep{cohen2017optimal}.

Despite their seemingly more favorable sampling complexity, recovery guarantees for orthogonal polynomials are not directly comparable to Theorem~\ref{thm:leastsquaresampling}. Indeed, the right-hand side of the error bound \eqref{eq:AcFinal} (which has an analogous form in the orthogonal polynomial setting---see, e.g., \cite[Corollary~5.9]{adcock2022sparse}) contains the best $L^\infty$-approximation error of $f$, which depends on the choice of basis. Consequently, the relevant question becomes: 
which basis is more suitable to efficiently approximate $f$ (e.g., with a potentially smaller set $\Lambda$)?
To the best of our knowledge, this approximation-theoretical question has no definitive answer in the literature.
An interesting theoretical comparison is carried out in, e.g., \citep[Section~7]{boyd2004prolate}, where the benefits of using the Slepian basis to approximate the simple analytic function $f(x) = \exp(-ik x)$ are shown asymptotically for $k \to \infty$. As we see in Section~\ref{sec:numerics}, there are functions (such as those considered in \citep{boyd2004prolate}) for which least squares approximation from random samples is more efficient when employing Slepian basis than orthogonal polynomial bases. 
\end{remark}

\subsection{Practical existence theorem} \label{subsec:PETresult}

Using the definition of $\gamma(\mathsf{w})$ in \eqref{eqdef:gammaw} for $\mathsf{w}\geq 1$, for $n\in\mathbb{N}$ we further define
\begin{equation} \label{eq:B}
    B(d,n) := 
    \begin{cases}
        n^{\gamma(\mathsf{w})} + 1 &\text{ if } d=1, \\
        3n^{2\gamma(\mathsf{w})} + 4n^{\gamma(\mathsf{w})} +2 &\text{ if } d=2, \\
        7n^{3\gamma(\mathsf{w})} + 12n^{2\gamma(\mathsf{w})} + 8n^{\gamma(\mathsf{w})} +3 &\text{ if } d=3,
    \end{cases}
\end{equation}
and 
\begin{equation} \label{eq:M}
    M(d,n) := 
    \begin{cases}
        1 &\text{ if } d=1, \\
        2n^{\gamma(\mathsf{w})} + 1 &\text{ if } d=2, \\
        4n^{2\gamma(\mathsf{w})} + 4n^{\gamma(\mathsf{w})} +2 &\text{ if } d=3.
    \end{cases}
\end{equation}

Our second result presents a PET for the least squares approximation of continuous functions, 
with a complexity rate derived from Theorem~\ref{thm:leastsquaresampling}.
A proof is given in Section~\ref{sec:ProofTheo3}. 

\begin{theorem}[Practical existence theorem]\label{thm:ApproxSpan} 
Let $\varepsilon,\delta,\beta \in (0,1)$, $d=1,2,3$, and $\Lambda=\Lambda^{\rm HC}_{n-1}$ be the $d$-dimensional hyperbolic cross of order $n$, where $n\in\mathbb{N}$ if $d=1,2$, and $n\geq 26$ if $d=3$. Let $\mathsf{w}\geq 1$ and $B(d,n)$, $M(d,n)$ be as in \eqref{eq:B}, \eqref{eq:M}, respectively.
Let $\vec{y}_1, \dots, \vec{y}_m$ be independent sample points drawn from the uniform measure $\bf u$ on $[-1,1]^d$, with 
\begin{equation*}
    m \geq ((1-\delta)\log(1-\delta)+\delta)^{-1} (2\texttt{\#}\Lambda)^{2\gamma(\mathsf{w})} \log(\texttt{\#}\Lambda/\beta).
\end{equation*}
Then for every $\varepsilon>0$ sufficiently small satisfying
\begin{equation} \label{epscondition}
    \varepsilon \leq \frac{\sqrt{1-\delta}}{2\sqrt{\texttt{\#}\Lambda} B(d,n)},
\end{equation}
there exists a class of NNs, denoted $\mathcal{N}_{\Lambda,\mathsf{w},\varepsilon}$, such that the following holds with probability at least $1-\beta$. For every $f \in \mathcal{C}([-1,1]^d)$ and every noise vector $\vec{e}=\frac{1}{\sqrt{m}}(\eta_j)_{j=1}^m \in \C^m$, the problem 
\begin{equation} \label{leastsquaresproblem2}
    \Psi^{\natural} \in \argmin_{\Psi \in \mathcal{N}_{\Lambda,\mathsf{w},\varepsilon}} 
    \frac{1}{m} \sum_{j=1}^m |f(\vec{y}_j) - {\rm R}(\Psi)(\vec{y}_j) - \eta_j|^2
\end{equation}
has a unique solution $\Psi^{\natural}\in\mathcal{N}_{\Lambda,\mathsf{w},\varepsilon}$ satisfying
\begin{align} \label{eq:PETFinal}
    \nonumber &\| f - {\rm R}(\Psi^{\natural}) \|_{L_{ {\bf u}}^2([-1,1]^d)} \\
    &\leq 
    \Big(1+ \dfrac{2}{\sqrt{1-\delta}} \Big) \Big(\inf_{g \in \mathcal{S}_{\Lambda}} \|f-g\|_{L^{\infty}([-1,1]^d)} + \sqrt{\texttt{\#}\Lambda} B(d,n)\|g\|_{L^2_{\bf u}([-1,1]^d)}\varepsilon \Big) + \dfrac{2\|\vec{e}\|_2}{\sqrt{1-\delta}}.
\end{align}
Moreover, it is guaranteed that
\begin{align*}   
    L(\Psi^{\natural}) 
    &
    \leq C \big((1+\log_2N_{\star})(N_{\star}+\log_2(N_{\star}/\varepsilon)) + (1+ \log_2 (M(d,n)/\varepsilon)\big),\\
    W(\Psi^{\natural}) 
    & 
    \leq C \texttt{\#}\Lambda \big((N_{\star}^2 + N_{\star})(N_{\star} + \log_2(N_{\star}/\varepsilon)) + (1+ \log_2 (M(d,n)/\varepsilon)\big),
\end{align*}
for a universal constant $C>0$ and $N_{\star}=N_{\star}(\Lambda,\mathsf{w},\varepsilon)\in\mathbb{N}$.
\end{theorem}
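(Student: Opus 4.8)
The plan is to emulate the exact least squares estimator of \Cref{thm:leastsquaresampling} by a neural network whose free parameters are the Slepian expansion coefficients, and to show that minimizing the empirical loss over this network class reproduces the recovery bound up to a controlled perturbation. As the first step I would apply \Cref{prop:ApproxSlepian} to obtain, for each $\vec{\nu}\in\Lambda$, a ReLU network $\Phi_{\vec{\nu}}$ satisfying $\|\varphi_{\mathsf{w},\vec{\nu}}-{\rm R}(\Phi_{\vec{\nu}})\|_{L^\infty([-1,1]^d)}\le B(d,n)\,\varepsilon$, with depth and size governed by the parameter $N_\star=N_\star(\Lambda,\mathsf{w},\varepsilon)$ and by the factor $\log_2(M(d,n)/\varepsilon)$. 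For $d\ge 2$ this construction runs through the Legendre representation of the one-dimensional prolates and then through ReLU multiplication networks (\Cref{appx:basicnetworks}) emulating the tensor product \eqref{tensor}; the amplification of the per-factor tolerance by the $L^\infty$-sizes of the one-dimensional prolates (each of order $n^{\gamma(\mathsf{w})}$) and the magnitudes fed into the multiplication networks are precisely what the explicit forms of $B(d,n)$ and $M(d,n)$ in \eqref{eq:B}--\eqref{eq:M} record.

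Second, I would set $\mathcal{N}_{\Lambda,\mathsf{w},\varepsilon}$ to be the family of realizations $\sum_{\vec{\nu}\in\Lambda}c_{\vec{\nu}}\,{\rm R}(\Phi_{\vec{\nu}})$ indexed by coefficient vectors $\vec{c}=(c_{\vec{\nu}})_{\vec{\nu}\in\Lambda}$, assembling the $\Phi_{\vec{\nu}}$ in parallel and taking a linear output layer via the concatenation and summation lemmas of \Cref{appx:basicnetworks}. Because every member shares this fixed architecture, the bounds on $L(\Psi^\natural)$ and $W(\Psi^\natural)$ follow immediately, the $\texttt{\#}\Lambda$ prefactor in $W$ reflecting the $\texttt{\#}\Lambda$ parallel subnetworks while the depth is unaffected by parallelization. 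With this definition, the minimization \eqref{leastsquaresproblem2} becomes a genuine finite-dimensional least squares over $\vec{c}$, with design matrix $\tilde{A}=\big(\tfrac{1}{\sqrt m}{\rm R}(\Phi_{\vec{\nu}})(\vec{y}_j)\big)_{j,\vec{\nu}}$; I would compare it to the exact-Slepian matrix $A=\big(\tfrac{1}{\sqrt m}\varphi_{\mathsf{w},\vec{\nu}}(\vec{y}_j)\big)_{j,\vec{\nu}}$ underlying \Cref{thm:leastsquaresampling} and estimate, in Frobenius norm, $\|A-\tilde{A}\|_2\le\sqrt{\texttt{\#}\Lambda}\,B(d,n)\,\varepsilon$.

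Third, I would condition on the high-probability event of \Cref{thm:leastsquaresampling}, on which the normalized Gram matrix of the exact Slepians lies within $\delta$ of the identity, so $\sigma_{\min}(A)\ge\sqrt{1-\delta}$. Hypothesis \eqref{epscondition} then makes $\|A-\tilde{A}\|_2\le\tfrac12\sqrt{1-\delta}$, and a Weyl perturbation bound yields $\sigma_{\min}(\tilde{A})\ge\tfrac12\sqrt{1-\delta}>0$; hence $\tilde{A}$ has full column rank and \eqref{leastsquaresproblem2} admits a unique minimizer. For the error estimate I would rerun the abstract least squares argument used for \Cref{thm:leastsquaresampling}, now over the span of $\{{\rm R}(\Phi_{\vec{\nu}})\}$: the lower frame constant $1-\delta$ is replaced by a constant multiple of $(1-\delta)/4$ coming from the halved singular value, which inflates the stability factor from $1+\tfrac{1}{\sqrt{1-\delta}}$ to $1+\tfrac{2}{\sqrt{1-\delta}}$ and doubles the noise coefficient, while the best approximation over the perturbed span is bounded, for any $g=\sum_{\vec{\nu}}c_{\vec{\nu}}\varphi_{\mathsf{w},\vec{\nu}}\in\mathcal{S}_{\Lambda}$, by $\|f-g\|_{L^\infty}+\sqrt{\texttt{\#}\Lambda}\,B(d,n)\,\|g\|_{L^2_{\bf u}([-1,1]^d)}\,\varepsilon$ through Cauchy--Schwarz and the orthonormality identity $\|g\|_{L^2_{\bf u}}=\|\vec{c}\|_2$. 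Taking the infimum over $g$ produces \eqref{eq:PETFinal}.

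The step I expect to be the main obstacle is the $d\ge 2$ bookkeeping underlying the first two steps: approximating the tensor-product prolates forces ReLU multiplication networks whose output error is amplified by the large $L^\infty$-norms of the one-dimensional factors, and reconciling the per-factor tolerance, the multiplication accuracy, the magnitude bounds, and the aggregate uniform error $\sqrt{\texttt{\#}\Lambda}\,B(d,n)\,\varepsilon$---so that \eqref{epscondition} exactly closes the singular-value perturbation---is where the delicate constant tracking behind the explicit polynomial forms of $B(d,n)$ and $M(d,n)$ resides.
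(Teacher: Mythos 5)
Your proposal is correct and follows essentially the same route as the paper's proof: approximate each Slepian basis function by a ReLU network via Proposition~\ref{prop:ApproxSlepian}, define $\mathcal{N}_{\Lambda,\mathsf{w},\varepsilon}$ as linear combinations of these fixed subnetworks with trainable output coefficients, compare the resulting design matrix $\tilde{A}$ to the exact Slepian matrix $A$ via the Frobenius bound $\sqrt{\texttt{\#}\Lambda}\,B(d,n)\varepsilon$ and Weyl's inequality, and conclude on the event $\sigma_{\min}(A)>\sqrt{1-\delta}$ of Theorem~\ref{thm:leastsquaresampling} that $\sigma_{\min}(\tilde{A})>\tfrac12\sqrt{1-\delta}$, which yields uniqueness, the inflated constants in \eqref{eq:PETFinal}, and the perturbed best-approximation term exactly as in Section~\ref{sec:ProofTheo3}. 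The only cosmetic difference is that you obtain uniqueness directly from the positive lower bound on $\sigma_{\min}(\tilde{A})$, whereas the paper additionally records a separate linear-independence argument by contradiction; the two are equivalent.
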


A few remarks are in order.
First, similar to Theorem~\ref{thm:leastsquaresampling}, Theorem~\ref{thm:ApproxSpan} states that functions well-approximated by the Slepian span of $\{\varphi_{\mathsf{w},\vec{\nu}}\}_{\vec{\nu}\in\Lambda}$ can be accurately recovered via deep learning using a training set of pointwise samples with size proportional to $(2\texttt{\#}\Lambda)^{2\gamma(\mathsf{w})}$, up a constant and a logarithmic factor. 
The theorem assumes training to be performed by solving a least squares-type optimization problem \eqref{leastsquaresproblem2} that closely resembles \eqref{leastsquaresproblem}, operating over a class of NNs $\mathcal{N}_{\Lambda,\mathsf{w},\varepsilon}$ with specific architectures, determined by $\Lambda$, $\mathsf{w}$, and $\varepsilon$. The class $\mathcal{N}_{\Lambda,\mathsf{w},\varepsilon}$ is defined in \eqref{eqdef:NLambda} and is composed by NNs whose first-to-second-last layers are explicitly constructed to emulate the Slepian basis and whose last layer is made of trainable weights, optimized through \eqref{leastsquaresproblem2}. 

Second, we briefly discuss the upper bounds on $L(\Psi^\sharp)$ and $W(\Psi^\sharp)$, i.e., the depth and width of the optimal network $\Psi^\sharp$, respectively. Note that since training occurs only in the last layer, these bounds are not directly tied to the computational difficulty of training $\Psi^\sharp$. However, they explicitly quantify how the network depth and width depend on $\Lambda$, $\mathsf{w}$, $\varepsilon$. In order to better understand these bounds, we unpack the definition of $N_{\star}=N_{\star}(\Lambda,\mathsf{w},\varepsilon)$. For a one-dimensional index set $\Lambda$, this is given in \eqref{eqdef:Nstar}. For the special case $\Lambda = \Lambda^{\rm HC}_{n-1}$ where $d=1,2,3$, the dependence on $\Lambda,\mathsf{w},\varepsilon$ can be simplified to that on $\mathsf{w},\varepsilon$, since---as explained in the proof of Proposition~\ref{prop:ApproxSlepian}---we have
\begin{equation} \label{theN}
    N_{\star} = \Big \lceil  \max \Big\{2\lfloor e  \mathsf{w} \rfloor +1, \frac{\log(3/(\varepsilon\mu_{\mathsf{w},n-1}))}{\log (3/2)} \Big\} \Big\rceil.
\end{equation}
Here, recall that $\mu_{\mathsf{w},n-1}$ denotes the $n$th-eigenvalue \eqref{eq:muj} of the operator $P_{\mathsf{w}}Q_T$. Moreover, for $\mathsf{w}\geq 1$, the following bounds, adapted from \citep[Corollary~3]{karnik2021improved}, hold for a universal constant $C>0$:
\begin{equation}\label{eq:mu_bounds}
\begin{alignedat}{2}
    \mu_{\mathsf{w},n-1} &\leq 10\exp\Big(-C\Big|\Big\lceil \frac{4\mathsf{w}}{\pi}\Big\rceil-(n-1)\Big|\Big) \qquad &&\text{ if } \qquad n-1 \geq \Big\lceil \frac{4\mathsf{w}}{\pi}\Big\rceil, \\
    \mu_{\mathsf{w},n-1} &\geq 1-10\exp\Big(-C\Big|\Big\lfloor \frac{4\mathsf{w}}{\pi}\Big\rfloor-n\Big|\Big) \qquad &&\text{ if } \qquad 0\leq n-1 \leq \Big\lfloor \frac{4\mathsf{w}}{\pi}\Big\rfloor - 1.
\end{alignedat}
\end{equation}
From \eqref{theN}, \eqref{eq:mu_bounds}, it follows that when $\Lambda = \Lambda^{\rm HC}_{n-1}$ with $d=1,2,3$, the quantity $N_{\star}$ grows linearly with respect to $\mathsf{w}$ and logarithmically with $1/\varepsilon$. Applying this to Theorem~\ref{thm:ApproxSpan} yields a network depth and width that scale at most polynomially with $\mathsf{w}$ and logarithmically with $1/\varepsilon$.

Third, the error bound \eqref{eq:PETFinal} in Theorem~\ref{thm:ApproxSpan} is similar to \eqref{eq:AcFinal} in Theorem~\ref{thm:leastsquaresampling}, so that besides minor changes in the constant factors, the main difference between \eqref{eq:PETFinal} and \eqref{eq:AcFinal} is the presence of an extra term in the best approximation error. 
The impact of this extra term is mild, though, as it scales linearly with an auxiliary parameter $\varepsilon \in (0,1)$ that can be made arbitrarily close to $0$ at the price of increasing the architecture bounds proportionally to $\log(1/\varepsilon)$. 

Finally, we note that Theorem~\ref{thm:ApproxSpan} demonstrates the existence of a class of neural networks that, when trained, can attain performance nearly matching least squares approximation in the Slepian basis. Owing to the nature of its proof (see Section~\ref{sec:ProofTheo3}), however, the theorem does not assert that such networks can outperform the Slepian basis---though this may occur empirically. Moreover, we stress that the main purpose of Theorem~\ref{thm:ApproxSpan} is of theoretical nature (i.e., it provides an improved version of a universal approximation theorem), and it is not intended to propose a new algorithmic recipe.

\subsection{Discussion on further recovery guarantees for least squares} \label{ss:more_LS_results}

We now turn to two additional recovery guarantees for least squares approximation. The reason we present these results is that the recovery guarantee in Theorem~\ref{thm:leastsquaresampling} comes with the drawback of bounding the error in terms of the best approximation of $f$ in the $L^{\infty}$-norm, which is a stronger and more restrictive norm compared to the $L_{ {\bf u}}^{2}$-norm used to measure the recovery error. Thus, here we present two results aimed at addressing this shortcoming. They can be seen as direct corollaries of Theorem~\ref{thm:LS} and Proposition~\ref{prop:Theta}, while their proofs are omitted as they are identical to other published results, which we will provide precise pointers to as we proceed through this subsection. 

The first result is an $L_{ {\bf u}}^{2}$-$L_{ {\bf u}}^{2}$ uniform recovery guarantee in probability. It is stated as follows.

\begin{corollary}[$L_{ {\bf u}}^{2}$-$L_{ {\bf u}}^{2}$ recovery guarantee in probability for least squares]\label{cor:least_squares_L2L2_prob}
In the same setting of Theorem~\ref{thm:leastsquaresampling}, let $\beta, \delta \in (0,1)$ and $f \in \mathcal{C}([-1,1]^d)$. 
Assume that the number of samples $m$ satisfies the condition $m \geq c (2\texttt{\#}\Lambda)^{2\gamma(\mathsf{w})} \log(2\texttt{\#}\Lambda/\beta)$,
where $c>0$ is a universal constant. Then for every noise vector $e\in \mathbb{C}^m$, with probability at least $1-\beta$, the least squares solution $f^{\natural}$ obtained in \eqref{leastsquaresproblem} satisfies
\begin{equation*}
    \| f - f^{\natural} \|_{L_{ {\bf u}}^2([-1,1]^d)} 
    \leq 
    \left(1+ \sqrt{\dfrac{2}{\beta}}\dfrac{1}{\sqrt{1-\delta}} \right)\inf_{g \in \mathcal{S}_{\Lambda}} \| f - g \|_{L_{\bf u}^{2}([-1,1]^d)} 
    + \dfrac{1}{\sqrt{1-\delta}} \| \vec{e} \|_2.
\end{equation*}
\end{corollary}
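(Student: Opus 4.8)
The plan is to read the corollary off the abstract least squares recovery theorem (Theorem~\ref{thm:LS}) together with the Christoffel-function estimate for the Slepian system (Proposition~\ref{prop:Theta}), replacing the \emph{uniform} control of the approximation tail---responsible for the $L^\infty$ best-approximation term in Theorem~\ref{thm:leastsquaresampling}---by a \emph{nonuniform}, in-probability control obtained through Markov's inequality. The conceptual point is that here $f$ is fixed before the samples are drawn, and it is exactly this nonuniformity that lets the best-approximation error be measured in the weaker $L^2_{\bf u}$-norm, at the cost of the factor $\sqrt{2/\beta}$.

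Concretely, I would first form the normalized design matrix $A \in \C^{m \times \#\Lambda}$ with entries $A_{jk} = m^{-1/2}\varphi_{\mathsf{w},\vec{\nu}_k}(\vec{y}_j)$. By Proposition~\ref{prop:Theta}, the supremum over $[-1,1]^d$ of the Christoffel function (reproducing kernel) of $\mathcal{S}_\Lambda$ is bounded by $(2\#\Lambda)^{2\gamma(\mathsf{w})}$ up to a universal constant; feeding this into the concentration estimate underlying Theorem~\ref{thm:LS}, the sample bound $m \geq c(2\#\Lambda)^{2\gamma(\mathsf{w})}\log(2\#\Lambda/\beta)$---where the factor $2$ inside the logarithm reflects allocating probability $\beta/2$ to this event---guarantees that the event $G_1$, on which the smallest singular value of $A$ is at least $\sqrt{1-\delta}$, holds with probability at least $1-\beta/2$. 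On $G_1$ the Gram matrix $A^\ast A$ is invertible, so \eqref{leastsquaresproblem} has a unique solution, and the lower Riesz bound $\|g\|_{L^2_{\bf u}} \leq (1-\delta)^{-1/2}\big(\tfrac1m\sum_{j=1}^m |g(\vec{y}_j)|^2\big)^{1/2}$ holds for every $g \in \mathcal{S}_\Lambda$.

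Next, on $G_1$ I would run the deterministic quasi-optimality step. Writing $f = P_\Lambda f + f^\perp$ with $P_\Lambda f$ the $L^2_{\bf u}$-orthogonal projection onto $\mathcal{S}_\Lambda$, so that $\|f^\perp\|_{L^2_{\bf u}} = \inf_{g \in \mathcal{S}_\Lambda}\|f - g\|_{L^2_{\bf u}}$, the element $f^\natural - P_\Lambda f$ is the discrete least-squares projection of the vector $(f^\perp(\vec y_j)+\eta_j)_{j=1}^m$ onto $\mathcal{S}_\Lambda$; as this projection is a contraction in the discrete semi-norm, $\big(\tfrac1m\sum_j|f^\natural(\vec y_j)-P_\Lambda f(\vec y_j)|^2\big)^{1/2} \leq \big(\tfrac1m\sum_j|f^\perp(\vec y_j)|^2\big)^{1/2} + \|\vec e\|_2$, where $\|\vec e\|_2 = \big(\tfrac1m\sum_j|\eta_j|^2\big)^{1/2}$. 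Combining this with the lower Riesz bound and the triangle inequality $\|f-f^\natural\|_{L^2_{\bf u}} \leq \|f^\perp\|_{L^2_{\bf u}} + \|P_\Lambda f - f^\natural\|_{L^2_{\bf u}}$ reduces matters to controlling the discrete tail $\tfrac1m\sum_j|f^\perp(\vec y_j)|^2$. This is the only random quantity left beyond $G_1$: since $\EE\big[\tfrac1m\sum_j|f^\perp(\vec y_j)|^2\big] = \|f^\perp\|^2_{L^2_{\bf u}}$, Markov's inequality gives the event $G_2 = \big\{\tfrac1m\sum_j|f^\perp(\vec y_j)|^2 \leq \tfrac2\beta\|f^\perp\|^2_{L^2_{\bf u}}\big\}$ with probability at least $1-\beta/2$. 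A union bound over $G_1\cap G_2$ then yields $\|f-f^\natural\|_{L^2_{\bf u}} \leq \big(1+\sqrt{2/\beta}\,(1-\delta)^{-1/2}\big)\|f^\perp\|_{L^2_{\bf u}} + (1-\delta)^{-1/2}\|\vec e\|_2$ with probability at least $1-\beta$, which is precisely the stated bound.

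The main obstacle is structural rather than computational: the two random events must be handled on the same sample draw with the failure probability split cleanly as $\beta/2+\beta/2$, and applying Markov's inequality to the fixed tail $f^\perp$ makes the guarantee inherently nonuniform---it holds for each $f$ separately rather than simultaneously for all $f$ from one draw, which is the genuine difference from Theorem~\ref{thm:leastsquaresampling}. Because the Slepian system is orthonormal in $L^2_{\bf u}([-1,1]^d)$ and Proposition~\ref{prop:Theta} supplies the only basis-dependent input, every remaining step transfers verbatim from the in-probability $L^2_{\bf u}$-$L^2_{\bf u}$ guarantees for orthogonal polynomials (cf.\ \citep[Chapter~5]{adcock2022sparse}); this is exactly why the authors declare the proof identical to published results and omit it.
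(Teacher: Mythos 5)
Your proposal is correct and takes essentially the same route the paper prescribes: it combines Theorem~\ref{thm:LS} with Proposition~\ref{prop:Theta} to get the discrete stability event with failure probability $\beta/2$, and controls the fixed-$f$ tail $\tfrac1m\sum_j|f^\perp(\vec{y}_j)|^2$ via Markov's inequality with the remaining $\beta/2$, exactly the argument of \citep[Corollary~5.10]{adcock2022sparse} that the authors invoke and omit. Your reconstruction of that omitted argument (projection decomposition, contraction of the discrete projection, union bound, and the resulting nonuniformity) is sound and matches the intended proof.
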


This result can be obtained from Theorem~\ref{thm:LS} and Proposition~\ref{prop:Theta} by applying the same argument as in \citep[Corollary 5.10]{adcock2022sparse}. The error bound now contains the best approximation error of $f$ with respect to the $L^2_{\bf u}$-norm as desired. However, as opposed to Theorem~\ref{thm:leastsquaresampling}, Corollary~\ref{cor:least_squares_L2L2_prob} is a \emph{nonuniform recovery guarantee}, i.e., the error bound holds with probability $1-\beta$ for a fixed $f$. 
In other words, one single draw of sample points is not sufficient for the validity of the recovery guarantee for all $f \in \mathcal{C}([-1,1]^d)$ simultaneously. Another limitation of Corollary~\ref{cor:least_squares_L2L2_prob} is the poor scaling of the error bound with respect to the failure probability $\beta$. The following result addresses these limitations by providing a recovery guarantee in expectation.

\begin{corollary}[$L_{ {\bf u}}^{2}$-$L_{ {\bf u}}^{2}$ recovery guarantee in expectation for least squares]
\label{cor:least_squares_L2L2_exp}
    Let $\delta, \beta \in (0,1)$ and $f \in \mathcal{C}([-1,1]^d)$ 
    be such that $\| f \|_{L_{ {\bf u}}^2([-1,1]^d)} \leq L$ for some $L>0$. Define the operator $\tau_L : L_{ {\bf u}}^2([-1,1]^d) \to L_{ {\bf u}}^2([-1,1]^d)$ as
    \begin{equation*}
        \tau_L(g) := \min \bigg \{1, \dfrac{L}{ \| g \|_{L_{ {\bf u}}^2([-1,1]^d)}} \bigg \}g.
    \end{equation*}
    Then, under the same assumptions as Theorem~\ref{thm:leastsquaresampling}, if $f^{\natural}$ is the least squares solution of \eqref{leastsquaresproblem}, for every noise vector $e \in \mathbb{C}^m$ we have 
    \begin{equation*}
        \mathbb{E}
        \| f - \tau_L(f^{\natural}) \|^2_{L_{ {\bf u}}^2([-1,1]^d)}
        \leq 
        \left(\dfrac{3- \delta}{1-\delta} \right)\inf_{g \in \mathcal{S}_{\Lambda}} \| f - g \|_{L^2_{\bf u}([-1,1]^d)}  
        + \dfrac{2}{1-\delta} \| \vec{e} \|_2^2 + 4 L^2 \beta.
    \end{equation*} 
\end{corollary}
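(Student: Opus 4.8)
The plan is to follow the standard conditioning-and-truncation template used for recovery-in-expectation guarantees, e.g. the argument behind \citep[Corollary~5.10]{adcock2022sparse} and the companion expectation result in that reference, adapting it to the Slepian setting through Theorem~\ref{thm:LS} and Proposition~\ref{prop:Theta}. First I would isolate the \emph{good event} $\Omega$ on which the empirical (normalized) Gram matrix associated with $\{\varphi_{\mathsf{w},\vec{\nu}}\}_{\vec{\nu}\in\Lambda}$ and the sample points $\vec{y}_1,\dots,\vec{y}_m$ is well-conditioned---concretely, the event on which Theorem~\ref{thm:LS} supplies a deterministic $L^2_{\bf u}$-$L^2_{\bf u}$ error estimate for $f^{\natural}$. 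The sample-complexity hypothesis, together with the bound on $\Theta$ from Proposition~\ref{prop:Theta}, guarantees $\mathbb{P}(\Omega^c)\leq\beta$.

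Next I would split the expectation across $\Omega$ and $\Omega^c$, writing $\mathbb{E}\|f-\tau_L(f^{\natural})\|^2_{L^2_{\bf u}} = \mathbb{E}[\|f-\tau_L(f^{\natural})\|^2_{L^2_{\bf u}}\mathbbm{1}_\Omega] + \mathbb{E}[\|f-\tau_L(f^{\natural})\|^2_{L^2_{\bf u}}\mathbbm{1}_{\Omega^c}]$, and bound the two pieces separately. On $\Omega^c$ the treatment is crude but uniform: since $\tau_L$ maps into the $L^2_{\bf u}$-ball of radius $L$ and $\|f\|_{L^2_{\bf u}}\leq L$, the triangle inequality gives $\|f-\tau_L(f^{\natural})\|_{L^2_{\bf u}}\leq 2L$ pointwise in the randomness, hence $\mathbb{E}[\|f-\tau_L(f^{\natural})\|^2_{L^2_{\bf u}}\mathbbm{1}_{\Omega^c}]\leq 4L^2\,\mathbb{P}(\Omega^c)\leq 4L^2\beta$, which is exactly the last term of the stated bound. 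This is precisely the role of introducing $\tau_L$: it turns the otherwise unbounded least-squares error into something controllable on the low-probability failure event.

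On $\Omega$ the key observation is that $\tau_L$ is the metric projection onto the convex $L^2_{\bf u}$-ball of radius $L$, hence non-expansive; since $f$ itself lies in this ball, $\|f-\tau_L(f^{\natural})\|_{L^2_{\bf u}}\leq\|f-f^{\natural}\|_{L^2_{\bf u}}$. I would then insert the deterministic $L^2_{\bf u}$-$L^2_{\bf u}$ estimate that Theorem~\ref{thm:LS} yields on $\Omega$---of the form $\|f-f^{\natural}\|_{L^2_{\bf u}}\leq\inf_{g\in\mathcal{S}_\Lambda}\|f-g\|_{L^2_{\bf u}} + (1-\delta)^{-1/2}\|\vec{e}\|_2$---square it, and use an elementary inequality such as $(a+b)^2\leq(1+c)a^2+(1+c^{-1})b^2$ to collect a squared best-approximation term and a squared noise term. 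The factors $\tfrac{3-\delta}{1-\delta}$ and $\tfrac{2}{1-\delta}$ in the statement arise precisely from this squaring together with the stability constant $(1-\delta)^{-1/2}$ inherited from the good event.

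The main obstacle is bookkeeping rather than conceptual: one must ensure that the deterministic bound extracted from Theorem~\ref{thm:LS} on $\Omega$ is already in the $L^2_{\bf u}$-$L^2_{\bf u}$ form (as in \citep[Corollary~5.10]{adcock2022sparse}) rather than the $L^2_{\bf u}$-$L^\infty$ form of Theorem~\ref{thm:leastsquaresampling}, and then track the squaring carefully so that the constants come out exactly as $\tfrac{3-\delta}{1-\delta}$ and $\tfrac{2}{1-\delta}$. Because every ingredient---the conditioning event from Proposition~\ref{prop:Theta}, the non-expansiveness of $\tau_L$, and the split over $\Omega$ and $\Omega^c$---is identical to the polynomial case, the argument transfers verbatim once the Slepian Christoffel-type quantity $\Theta$ is in hand, which is why the full proof can be safely omitted.
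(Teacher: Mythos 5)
Your overall skeleton---splitting over the stability event $\Omega$ and its complement, using the truncation $\tau_L$ together with $\|f\|_{L^2_{\bf u}}\leq L$ to get the $4L^2\beta$ term on $\Omega^c$, and using non-expansiveness of $\tau_L$ on $\Omega$---is indeed the frame of the argument the paper points to (\citep[Corollary~5.11]{adcock2022sparse}, originally \citep[Theorem~2]{cohen2013stability}). However, there is a genuine gap at the heart of your good-event step: you propose to ``insert the deterministic $L^2_{\bf u}$-$L^2_{\bf u}$ estimate that Theorem~\ref{thm:LS} yields on $\Omega$.'' No such deterministic estimate exists. On $\Omega$, Theorem~\ref{thm:LS} only gives $\alpha>\sqrt{1-\delta}$, and the stability constant controls the discrete seminorm \emph{only for elements of} $\mathcal{S}_\Lambda$; for the residual $f-g$ with $g\in\mathcal{S}_\Lambda$ arbitrary, the discrete seminorm can only be bounded pointwise-in-the-sample-draw by $\|f-g\|_{L^\infty}$, which is exactly why Theorem~\ref{thm:leastsquaresampling} carries the $L^\infty$ best-approximation error. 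The $L^2$-$L^2$ form of Corollary~\ref{cor:least_squares_L2L2_prob} is not a deterministic statement on $\Omega$ either: it is obtained by applying Markov's inequality to the random quantity $\|f-P_\Lambda f\|_{\rm disc}^2$, holds on a further event, and carries the factor $\sqrt{2/\beta}$; squaring it inside the expectation would contaminate the approximation-error constant with $\beta^{-1}$ and cannot produce the stated bound.

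The step you dismissed as bookkeeping is the conceptual core, and it is resolved differently: one takes $g=P_\Lambda f$, the $L^2_{\bf u}$-orthogonal projection, and uses the Pythagorean identity
\begin{equation*}
\|f-f^{\natural}\|_{L^2_{\bf u}}^2=\|f-P_\Lambda f\|_{L^2_{\bf u}}^2+\|P_\Lambda f-f^{\natural}\|_{L^2_{\bf u}}^2 .
\end{equation*}
On $\Omega$, since $P_\Lambda f-f^{\natural}\in\mathcal{S}_\Lambda$, the stability bound gives $\|P_\Lambda f-f^{\natural}\|_{L^2_{\bf u}}^2\leq \tfrac{1}{1-\delta}\|P_\Lambda f-f^{\natural}\|_{\rm disc}^2$; quasi-optimality of $f^\natural$ in the discrete (semi)norm gives $\|P_\Lambda f-f^{\natural}\|_{\rm disc}\leq \|f-P_\Lambda f\|_{\rm disc}+\|\vec{e}\|_2$, hence $\|P_\Lambda f-f^{\natural}\|_{\rm disc}^2 \leq 2\|f-P_\Lambda f\|_{\rm disc}^2+2\|\vec{e}\|_2^2$. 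The decisive move---and the one entirely missing from your proposal---is that the $L^\infty$-to-$L^2$ improvement happens \emph{only under the expectation}: since the $\vec{y}_j$ are i.i.d.\ uniform, $\mathbb{E}\big[\|f-P_\Lambda f\|_{\rm disc}^2\mathbbm{1}_{\Omega}\big]\leq \mathbb{E}\|f-P_\Lambda f\|_{\rm disc}^2=\|f-P_\Lambda f\|_{L^2_{\bf u}}^2$. Assembling these pieces yields exactly the constants $1+\tfrac{2}{1-\delta}=\tfrac{3-\delta}{1-\delta}$ and $\tfrac{2}{1-\delta}$; note that your route, even if its premise held, would produce $2$ and $\tfrac{2}{1-\delta}$ via the Young-type inequality with $c=1$, so the stated constants are themselves a fingerprint of the projection argument rather than of a squared triangle-inequality bound.
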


This result can be obtained from Theorem~\ref{thm:LS} and Proposition~\ref{prop:Theta} by applying the same argument as in \citep[Corollary 5.11]{adcock2022sparse} (originally proposed in \citep[Theorem 2]{cohen2013stability}). Notably, the poor scaling with respect to $\beta$ in the error bound of Corollary~\ref{cor:least_squares_L2L2_prob} is not there anymore. However, this comes at the price of having a recovery guarantee in expectation, as opposed to high probability, while also requiring a priori knowledge of a constant $L$ such that $\| f \|_{L_{ {\bf u}}^2([-1,1]^d)} \leq L$. 

We conclude with a brief remark that the proof of Theorem~\ref{thm:ApproxSpan} can also be modified to incorporate $L^2_{\bf u}$-$L^2_{\bf u}$ recovery guarantees by employing Corollary~\ref{cor:least_squares_L2L2_prob} or \ref{cor:least_squares_L2L2_exp} in place of Theorem~\ref{thm:leastsquaresampling}. 
For the sake of conciseness, we omit the statements of the resulting corollaries.

\section{Numerical experiments}\label{sec:numerics}

In this section, we provide numerical examples illustrating the performance of least squares approximation and deep learning in reconstructing frequency-localized functions from pointwise samples. 
For simplicity, we refer to these methods as ``Least Squares'' and ``Deep Learning'' throughout. 
We describe our numerical setup and discuss the results obtained in dimensions one and two.
The Python code necessary to reproduce our experiments can be found in the GitHub repository: \url{https://github.com/andreslerma01/PSWF-}.

\paragraph{Training set, test set, and error metric.} For a given target function $f$ that we seek to approximate using either Least Squares or Deep Learning, we generate data sets of the form $\{ (x_j, f(x_j) \}_{j=1}^m$ with $x_j$ drawn independently from the uniform distribution on $[-1,1]^d$. 
For a set of $m$ training points, we also consider a test set $\{x_j^{\text{test}}\}_{j=1}^{m_{\text{test}}}$ of size $m_{\text{test}} = 0.2m $, sampled independently according to the uniform distribution on $[-1,1]^d$. The corresponding test error is given by the \emph{Root Mean Square Error (RMSE)}, defined as 
\begin{equation*} 
    \mathcal{E}_{{\text{test}}} := 
    \sqrt{\dfrac{1}{m_{\text{test}}} \sum_{j=1}^{m_{\text{test}}} |f(x_j^{{\text{test}}}) - \tilde{f}(x_j^{{\text{test}}}) |^2},
\end{equation*} 
where $f$ is the target function, and $\tilde{f}$ is the approximation obtained from either method.

\paragraph{Slepian basis functions.}
To construct one-dimensional Slepian basis functions $\varphi_{\mathsf{w}, j}$, we employ the Python package \texttt{scipy.special} \citep{2020SciPy-NMeth}, which is based on the angular solutions of the Helmholtz wave equation of the first kind. 
In fact, each function $\varphi_{\mathsf{w},j}$ is a scalar multiple of a corresponding angular solution \citep{moore2004prolate}. 
To construct two-dimensional Slepian basis functions, we utilize the tensor-product definition \eqref{tensor} $\varphi_{\mathsf{w},\vec{\nu}} = \varphi_{\mathsf{w},\nu_1} \otimes \varphi_{\mathsf{w},\nu_2}$. 
Finally, in both cases, we take the index set to be the hyperbolic cross $\Lambda^{\text{HC}}_{n-1}$ (Definition~\ref{def:HC}) for various orders $n\in\N$.

\subsection{The Slepian basis vs.\ orthogonal polynomials} 
To begin, we illustrate how the Least Squares method with the Slepian basis can outperform other recurrent orthogonal bases. More specifically, we compare the Slepian basis with the Chebyshev and Legendre orthogonal polynomial bases. Recall that Chebyshev polynomials (of the first kind) are defined on the interval $[-1,1]$ as $T_{k}(x) = \cos(k \arccos x)$, for $k \in \mathbb{N}$.
For a definition of Legendre polynomials, see Equation \eqref{eq:NormLegendr} in Section~\ref{sec:prelimformainthm2}, where they are used to prove Theorem~\ref{thm:ApproxSpan}. For more information about these families of orthogonal polynomials, we refer to, e.g., \citep[Section~2.2]{adcock2022sparse}. 
To perform our analysis, we consider the following functions. In one dimension, we take the Gaussian 
\[
g_1(x) := e^{-\pi  x^2}, \qquad \forall x\in [-1,1],
\]
and the complex exponential
\[
g_2(x) := e^{4 \pi i x}, \qquad \forall x\in [-1,1].
\]
In two dimensions, we consider
\[
g_3(x,y) := e^{-\pi(x^2+y^2+0.2xy)}, \qquad \forall (x, y)\in [-1,1]^2.
\]
Note that $g_3$ is non-separable, meaning it is not simply the product of two one-dimensional functions, thus making it purely two-dimensional. For each of these functions, we run two types of experiments. First, we analyze how the approximation error behaves when the number of samples varies while keeping the basis fixed as $\Lambda^{\text{HC}}_{n-1}$ (note that in the one-dimensional case $\Lambda^{\text{HC}}_{n-1} = \{0, \ldots, n-1\}$ corresponds to $n$ basis elements). For each of the functions $g_1, g_2, g_3$, the sample size $m$ takes values from $10$ up to $10000$.
For every experiment using a set of $m$ training points, we also consider a test set $\{x_j^{\text{test}}\}_{j=1}^{m_{\text{test}}}$ of size $m_{\text{test}} = 1000$, sampled independently from the uniform distribution on $[-1,1]^d$. 
Second, we observe the behavior of the error when the number of samples is fixed at $m=1000$ and the order $n$ varies.


\begin{figure}[t!]
    \centering
    \includegraphics[width = 0.40\textwidth]{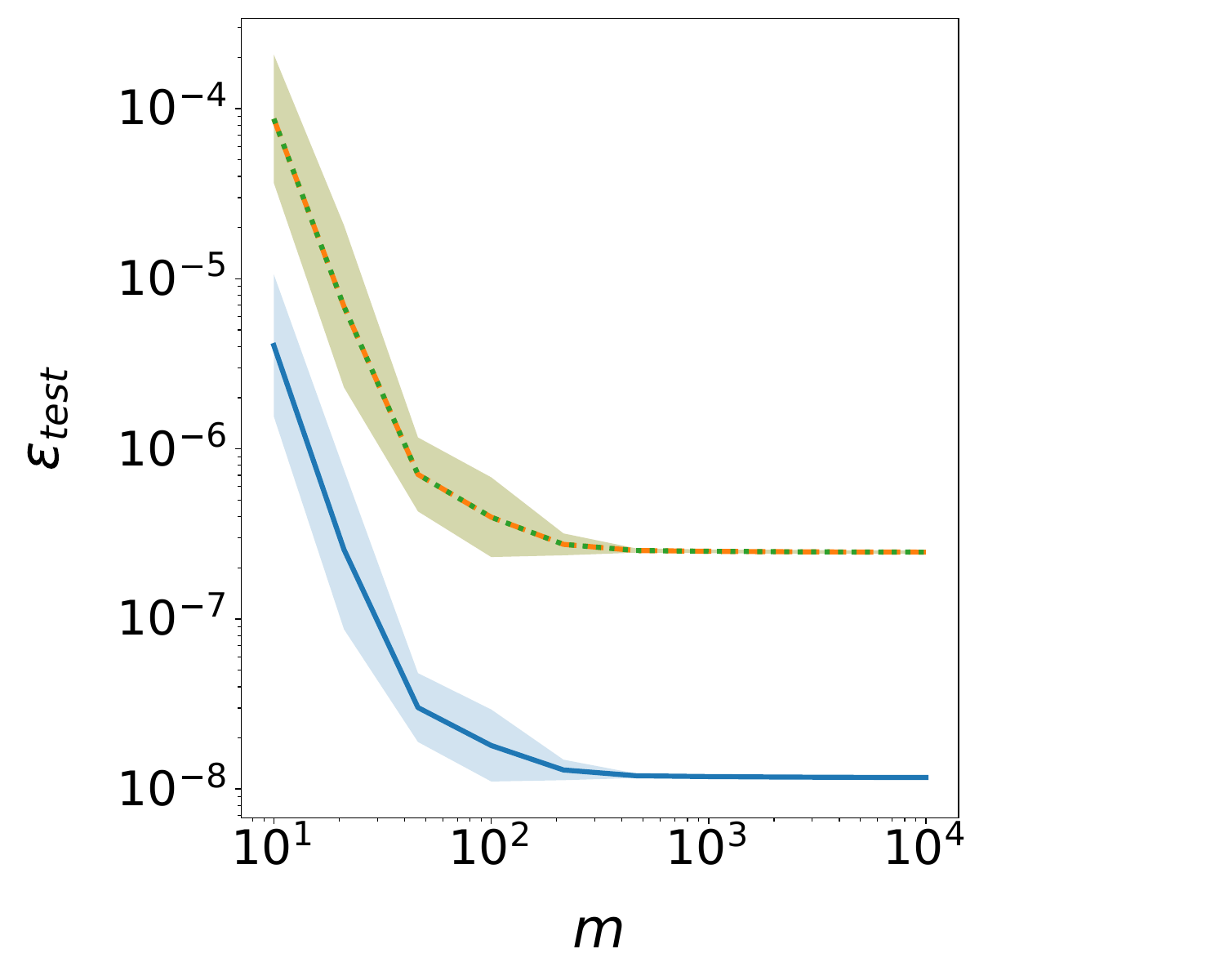} 
    \includegraphics[width = 0.49\textwidth]{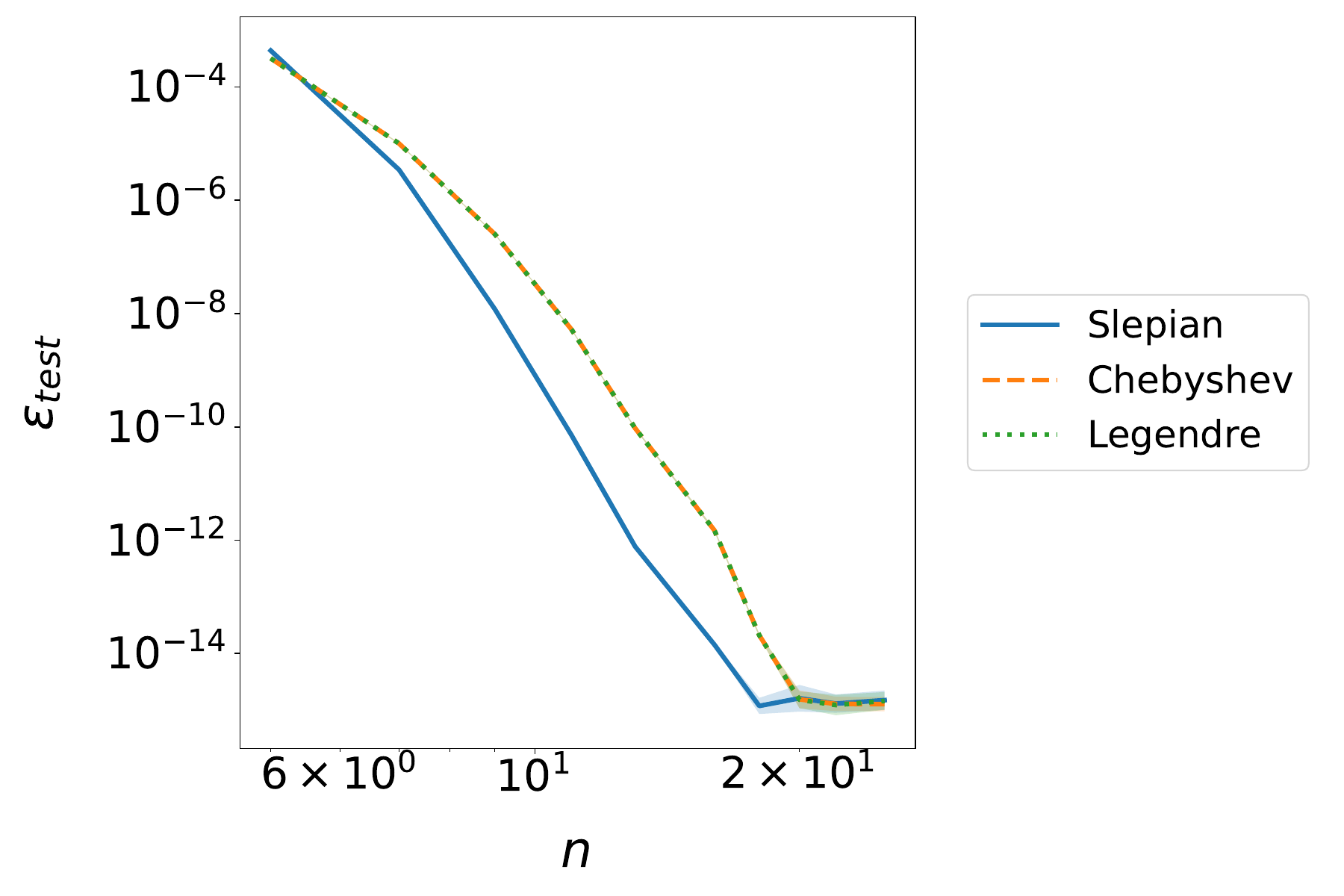}
  \caption{Comparison of Least Squares approximations using different bases for the function $g_1(x) = e^{-\pi x^2}$
  from $m$ random samples. The left panel shows results using $n=10$ basis functions and varying the number of samples, while the right panel shows results when the number of samples is fixed at $m=1000$ and the order parameter $n$ changes. Note that the results for Chebyshev and Legendre polynomials can be hard to distinguish because they are almost identical.}
  \label{fig:1DBases} 
\end{figure}

\begin{figure}[t!]
    \centering
    \includegraphics[width = 0.40\textwidth]{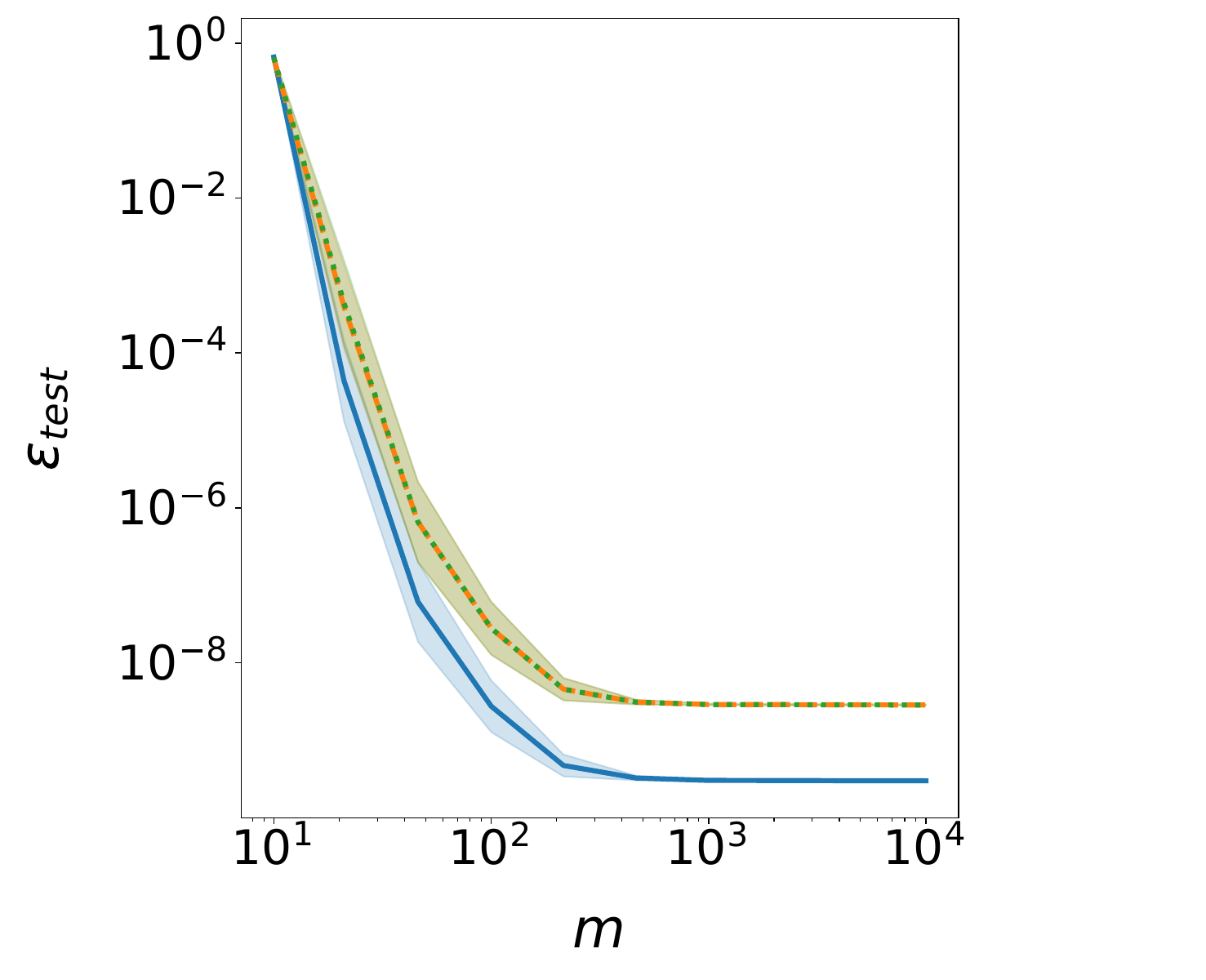} 
    \includegraphics[width = 0.49\textwidth]{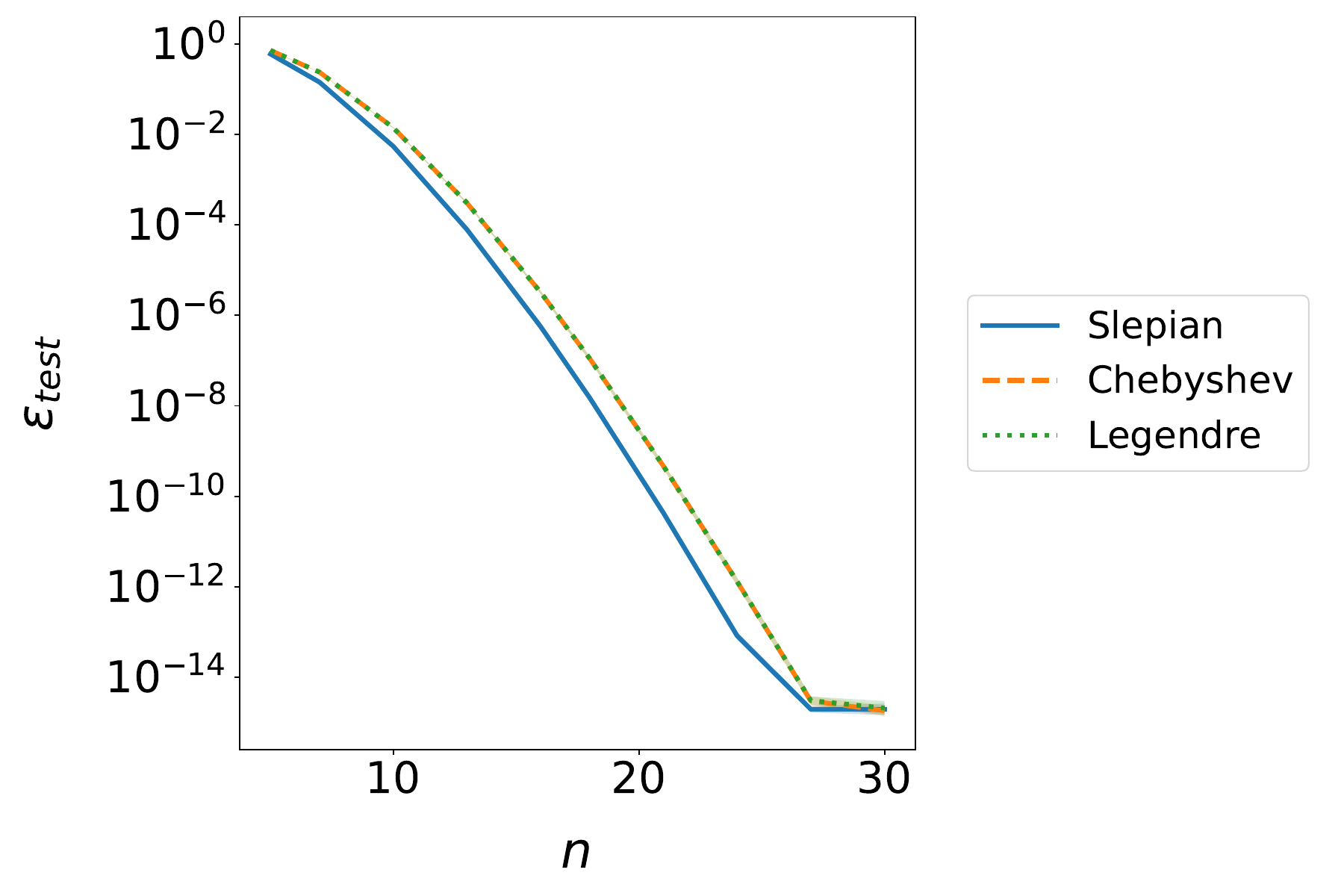}
  \caption{Comparison of least squares approximations using different bases for the function $g_2(x) = e^{ 4i \pi x}$ 
  from $m$ random samples. The left panel shows results using $n=20$ basis functions and varying the number of samples, while the right panel shows results when the number of samples is fixed at $m=1000$ and the order parameter $n$ changes. Note that the results for Chebyshev and Legendre polynomials can be hard to distinguish because they are almost identical.}
  \label{fig:1DBasesComplex} 
\end{figure}

\paragraph{One-dimensional results.}
Figures \ref{fig:1DBases} and \ref{fig:1DBasesComplex} display the RMSEs for the functions $g_1$ and $g_2$, respectively.
For moderate order $n$, using the Slepian basis for Least Squares reconstruction outperforms the Chebyshev and Legendre bases. (Note, when $d=1$ we simply have $\Lambda^{\text{HC}}_{n-1} = \{0, 1, \ldots n-1\}$, hence $n$ is also the number of basis functions.) 
In particular, when the number of samples is large enough, the RMSE for the Slepian basis is between one and two orders of magnitude smaller than for the two polynomial bases.
When the order $n=10$ is fixed, the RMSE converges as the number of samples reaches $m=1000$  and remains stable. At $m=1000$ samples, the RMSE decreases as the number of parameters increases. 
Reconstructions with the Chebyshev and Legendre bases yield very similar, though not identical, RMSEs, which are difficult to distinguish on the logarithmic scale. 


\paragraph{Two-dimensional results.} Figure \ref{fig:2Dtaketwo} shows that the Slepian basis is superior for reconstructing $g_3$. 
\begin{figure}[t!]
    \centering
    \includegraphics[width = 0.40\textwidth]{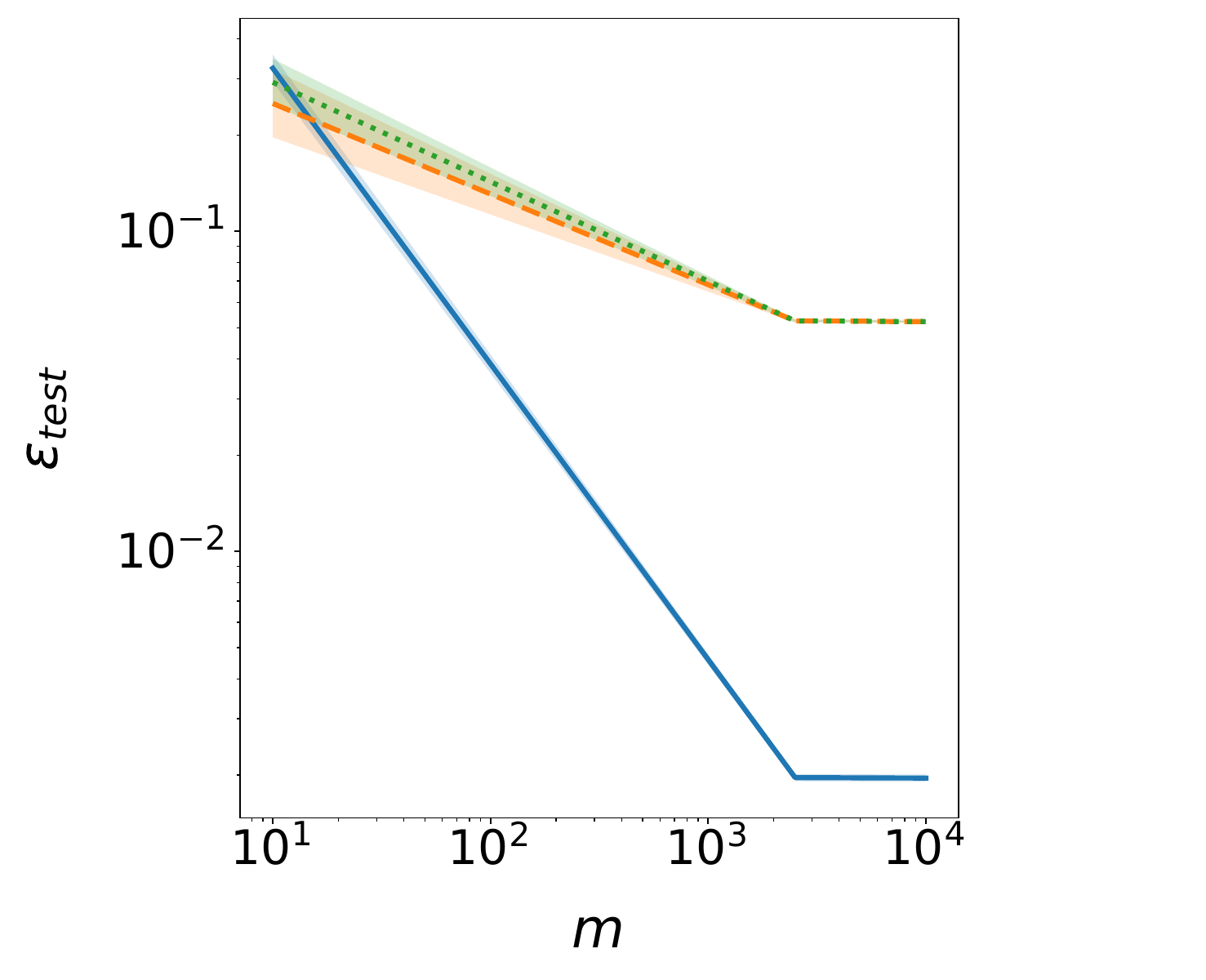} 
    \includegraphics[width = 0.49\textwidth]{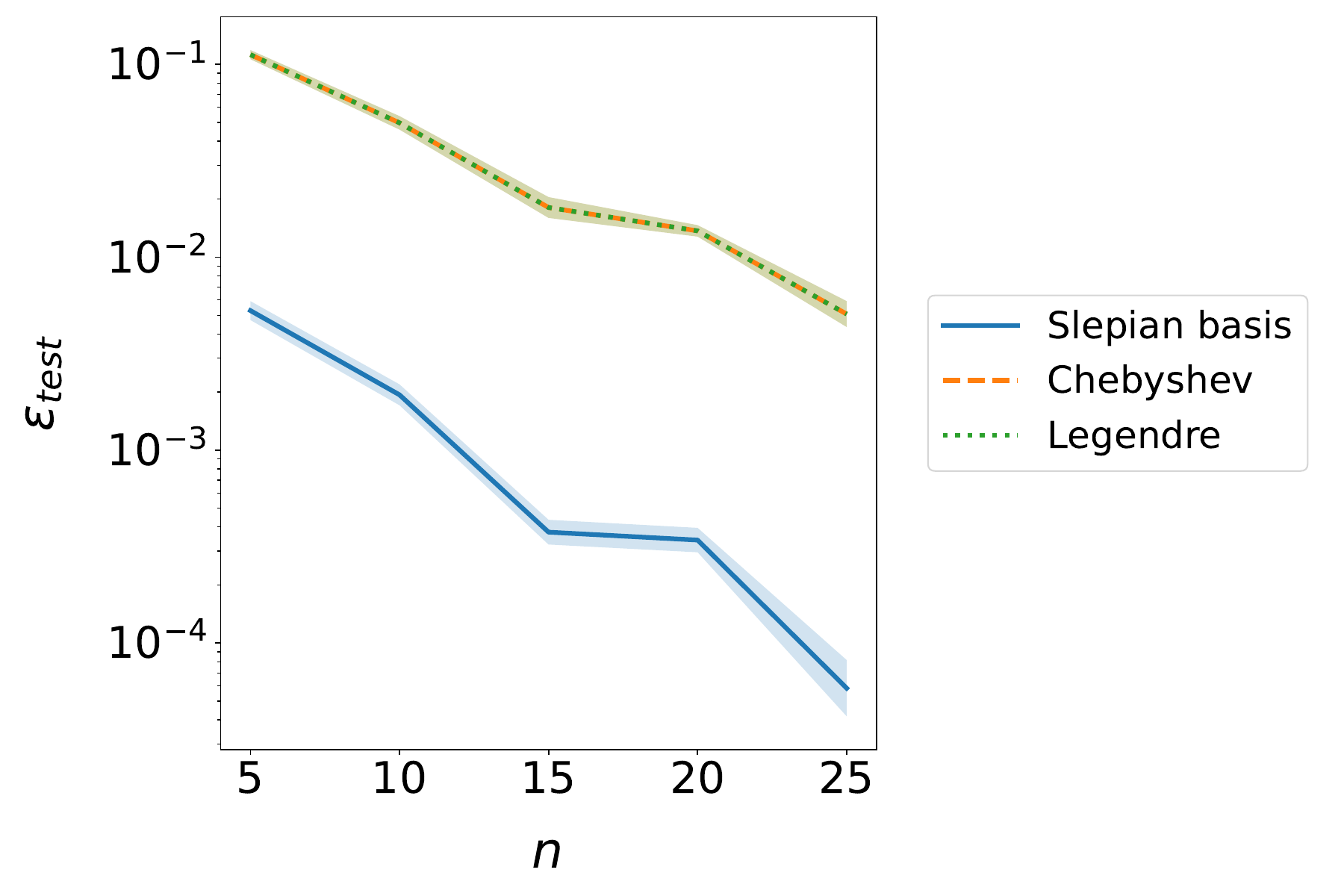}
  \caption{Comparison of Least Squares approximations using different bases for the function $g_3(x) = e^{-\pi(x^2+y^2+0.2xy)}$ 
  from random samples. The left panel shows results using a hyperbolic cross $\Lambda^{\text{HC}}_{n-1}$ with $n=10$ and varying the number of samples, while the right panel shows results when the number of samples is fixed at $m=1000$ and the order parameter $n$ changes. Note that the results for Chebyshev and Legendre polynomials can be hard to distinguish because they are almost identical.}
  \label{fig:2Dtaketwo} 
\end{figure}
In general, the RMSEs obtained in this case are several orders of magnitude higher than those in the one-dimensional case, reflecting the increased complexity of the approximation problem. In the left-hand plot, we consider $n=10$, which entails a large number of tensorized Slepian functions within the hyperbolic cross $\Lambda^{\text{HC}}_{n-1}$ (for a bound on its cardinality in all dimensions, see for example Lemma~\ref{lem:HCsize}). As in the one-dimensional experiments, the RMSE converges as the number of samples increases, for both the Slepian and the polynomial bases. Likewise, the right-hand plot, where the number of samples is fixed at $m=1000$, shows that the RMSE decreases as the number of parameters increases. 
In general, the benefits of Slepian over orthogonal functions appear more evident for $g_3$ and $g_1$ than $g_2$.

\subsection{Comparisons of different initializations for Deep Learning}
\label{sec:initializations}

Additionally, we conduct a set of experiments to compare different neural network initialization strategies. The main purpose of this section is to show that a novel Slepian-based initialization inspired by Theorem~\ref{thm:ApproxSpan} can lead to more accurate approximations with respect to other standard random initializations strategies for Deep Learning. In particular, we train networks with architecture $(1, 1000, 1000, 10, 1)$, where, in general, an architecture $(n_0, \ldots, n_{L+1})$ denotes a feedforward neural network with input layer of dimension $n_0$, output layer of dimension $n_{L+1}$ and $L$ hidden layers of size $n_1, \ldots, n_L$, respectively.
In our experiments, we consider the function  
\begin{equation}
\label{eq:def_f1}
f_1(x) = \cos(10x) e^{-\pi x^2}, \qquad \forall x \in [-1,1].
\end{equation}
This one-dimensional function, defined as the product of a trigonometric function and a Gaussian, was considered in \citep{moore2004prolate} as a simple example in which the Slepian basis achieves better approximation accuracy than the standard Fourier basis. In all cases, networks are trained using Adam \citep{diederik2015adam} with an exponential decay rate of $0.85$.

We start by describing the proposed Slepian-based initialization strategy. Inspired by the decomposition of a frequency-localized function \(f\) with respect to the Slepian basis, 
\begin{equation}\label{eq:ApproxSum}
f(x) \approx \sum_{j=0}^{J} \alpha_j \varphi_{\mathsf{w},j},
\end{equation}  
and by the construction of Theorem~\ref{thm:ApproxSpan} (see Section~\ref{sec:prelimformainthm2}), we first approximate each Slepian basis function \(\varphi_{\mathsf{w},j}\) using distinct neural networks \(\tilde{\varphi}_{\mathsf{w},j}\) with architecture $(1, 100, 100, 1)$.  
Each network is trained on $10{,}000$ samples \((\vec{x}_i, \varphi_{\mathsf{w},j}(\vec{x}_i)), i=1,\dots,10000\), where \(\vec{x}_i \in [-1,1]^d\) are drawn uniformly at random and for 1000 epochs.  
We train networks for the first ten Slepian basis functions (\(j = 0,1,\dots,9\)).  
The test errors for these networks are reported in Table~\ref{tab:1}.  
\begin{table}[t!]
\centering
\begin{tabular}{|c|c|c||c|c|c|}
\hline
\textbf{Index} $j$ & \textbf{Training error} & \textbf{Test error} & \textbf{Index} $j$ & \textbf{Training error} & \textbf{Test error}\\
\hline
0 & $9.320 \times 10^{-8}$ & $8.837 \times 10^{-8}$ & 5 & $1.571 \times 10^{-4}$ & $1.960 \times 10^{-4}$ \\
1 & $4.365 \times 10^{-7}$ & $3.578 \times 10^{-7}$ & 6 & $3.448 \times 10^{-3}$ & $3.270 \times 10^{-3}$ \\
2 & $8.744 \times 10^{-7}$ & $7.630 \times 10^{-7}$ & 7 & $2.218 \times 10^{-4}$ & $2.430 \times 10^{-4}$ \\
3 & $2.680 \times 10^{-7}$ & $2.766 \times 10^{-7}$ & 8 & $1.795 \times 10^{-5}$ & $1.845 \times 10^{-5}$ \\
4 & $1.877 \times 10^{-5}$ & $1.921 \times 10^{-5}$ & 9 & $1.307 \times 10^{-4}$ & $1.138 \times 10^{-4}$ \\
\hline
\end{tabular}
\caption{Test and training errors for networks $\tilde{\varphi}_{\mathsf{w},j}$ with architecture $(1,100,100,1)$ approximating the Slepian basis functions $\varphi_{\mathsf{w},j}$.}
\label{tab:1}
\end{table}
We observe that the approximation error is small for low-index basis functions and increases with \(j\); in particular, the functions with indices \(j=6,7\) yield the largest errors. 
Once each Slepian basis function is approximated, we combine the trained networks according to Equation~\eqref{eq:ApproxSum} and determine the coefficients \(\alpha_j\) via least-squares fitting using $10{,}000$ new random samples \((\vec{x}_i, f(\vec{x}_i))\).  
The networks and weights are then concatenated into a single network \(\tilde{\Phi}_1\) of architecture $(1,100 \times 10,100 \times 10, 10, 1) = (1, 1000, 1000, 10, 1)$ that realizes the corresponding linear combination.  
This construction is analogous to the neural network developed in Section~\ref{sec:ProofTheo3} for the proof of Theorem~\ref{thm:ApproxSpan}.  
The resulting network \(\Phi_1\) is subsequently used to initialize a network with identical architecture, denoted \(\tilde{\Phi}_1\). Note that the most expensive part of this construction is training the networks $\tilde{\varphi}_{\mathsf{w},j}$. However, this computation is independent of the particular function $f$ that one wants to approximate. Hence, it can be considered as an offline pre-computing step. On the other hand, the least square fitting step is function dependent but can be solved quite efficiently.

In parallel, we train three additional networks with the same $(1, 1000, 1000, 10, 1)$ architecture but different initialization schemes, i.e.,
\begin{itemize}
    \item \(\Phi_2\) with standard normal initialization (mean 0, standard deviation 0.1),  
    \item \(\Phi_3\) with He initialization \cite{he2015delving},  
    \item \(\Phi_4\) with Glorot initialization \cite{glorot2010understanding}.  
\end{itemize}
The corresponding training and test errors for \(\Phi_1\), \(\Phi_2\), \(\Phi_3\), and \(\Phi_4\) are shown in Figure~\ref{fig:TrainingVsTest}.  
\begin{figure}[t!]
    \centering
    \includegraphics[width = 0.49\textwidth]{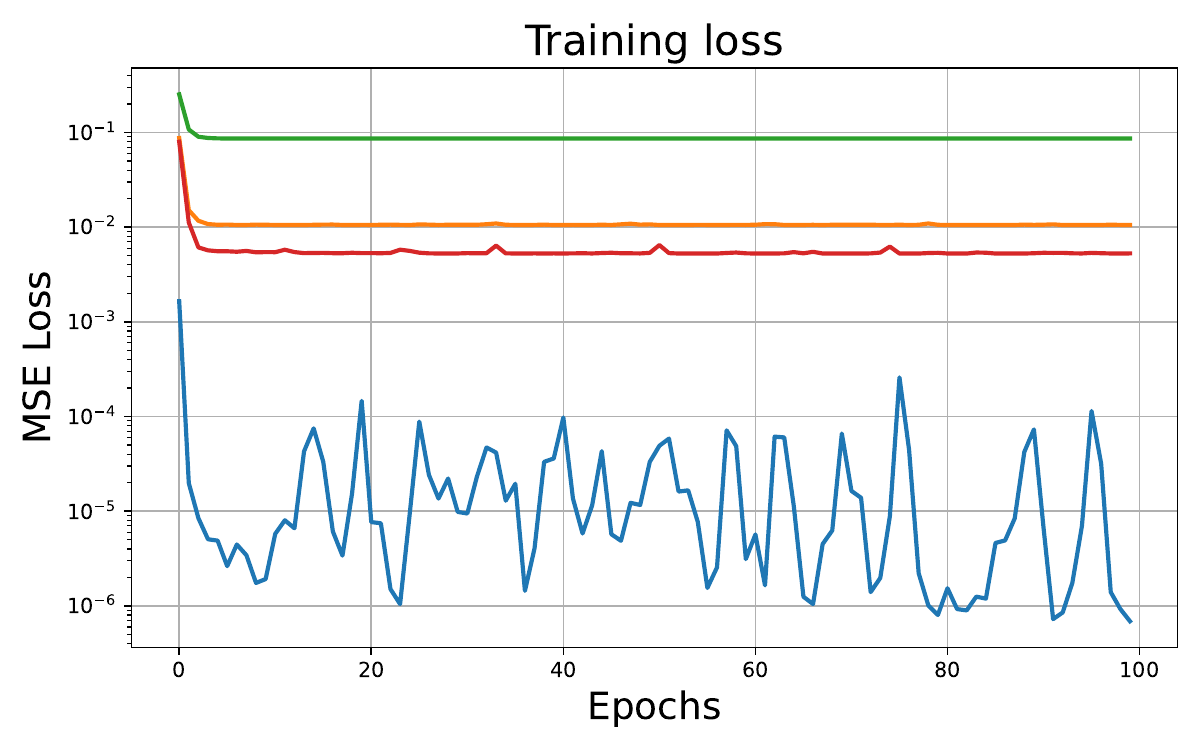}
    \includegraphics[width = 0.49\textwidth]{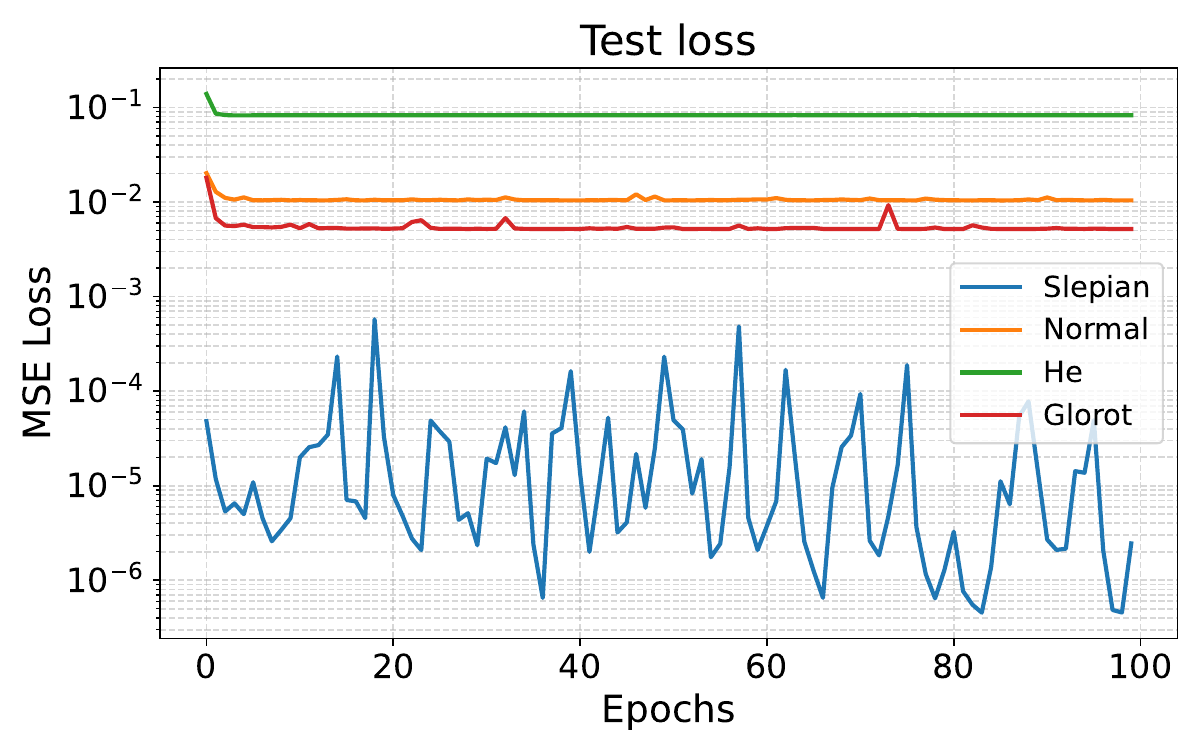} 
  \caption{Training and test errors for neural networks with architecture $(1,1000,1000,10,1)$ approximating the function $f_1(x) = \cos(10x)e^{-\pi x^2}$ using different initialization strategies. 
    The left panel shows the training error and the right panel shows the test error.}
  \label{fig:TrainingVsTest} 
\end{figure}
Our proposed initialization (\(\Phi_1\)) outperforms all others, achieving errors on the order of \(10^{-6}\) for both training and test sets (considering the best epoch among those shown). Glorot initialization yields errors around \(10^{-2}\), while the standard normal and He initializations result in errors of approximately \(10^{-1}\).  
Across all initialization methods, training and test errors converge within a few epochs.

\subsection{Least Squares vs.\ Deep Learning}
We assess the performance of Least Squares and Deep Learning for the reconstruction of the one-dimensional function $f_1$ defined in \eqref{eq:def_f1}
and the two-dimensional function 
\[
f_2(x,y) := \cos(0.2x)\cos(0.2y)e^{-\pi (x^2 + y^2)}, \qquad \forall (x,y)\in [-1,1]^2.
\]
The two-dimensional function $f_2$ generalizes the one-dimensional function $f_1$ 
while reducing the cosine frequency from 10 to 0.2. Both functions are frequency-localized since most of their energy is concentrated within a compact interval in the frequency domain. 

For each function $f$, we consider the sample size $m$ taking values up to $10000$.
For a set of $m$ training points, we also consider a test set $\{x_j^{\text{test}}\}_{j=1}^{m_{\text{test}}}$ of size $m_{\text{test}} = 0.2m $, sampled independently according to the uniform distribution on $[-1,1]^d$. 

\paragraph{Neural network architecture and training.} Following standard deep learning practice, we consider fully trained NNs. Here, we deviate from the theoretical setting of Theorem~\ref{thm:ApproxSpan}, where only the last layer is trained. NN architectures are built according to the ratio $r = L/N = 0.1$, where $L$ represents the number of hidden layers and $N$ the number of neurons per layer. Specifically, we conduct experiments for $L = 1,\dots, 10$, with $r = 0.1$, a choice empirically supported by \citep{adcock2021gap}. Weights and biases are initialized from a normal distribution with mean $0$ and standard deviation $0.1$. 
Training is performed using Adam \citep{diederik2015adam} with an exponential decay rate of $0.85$ for $150$ epochs. To ensure the robustness and reliability of the results, each experiment is repeated $20$ times.

\paragraph{One-dimensional results.} Figure~\ref{fig:1D} illustrates the results of our first experiment on approximating the one-dimensional function $f_1$.
\begin{figure}[t!]
    \centering
    \includegraphics[width = 0.49\textwidth]{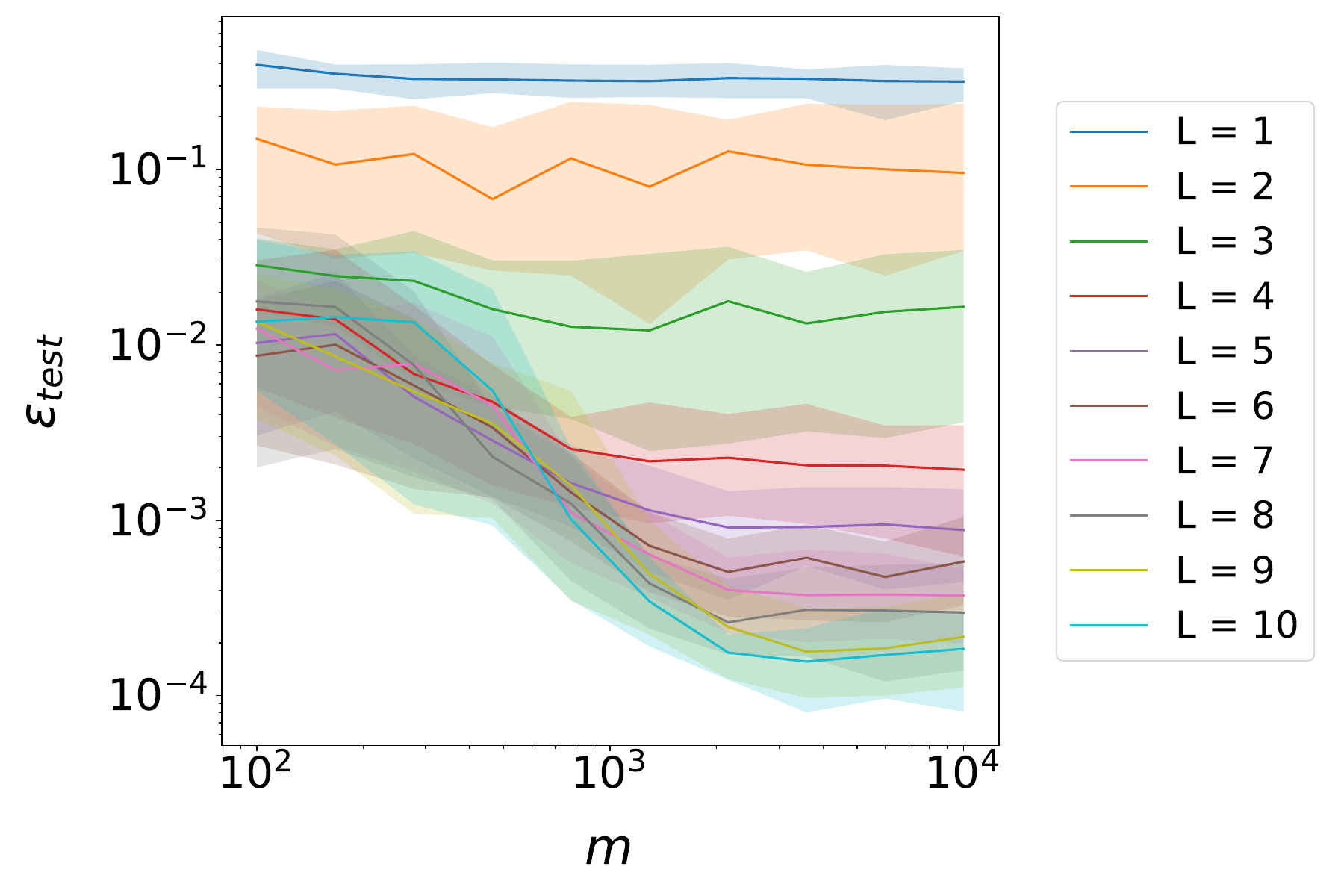} 
    \includegraphics[width = 0.49\textwidth]{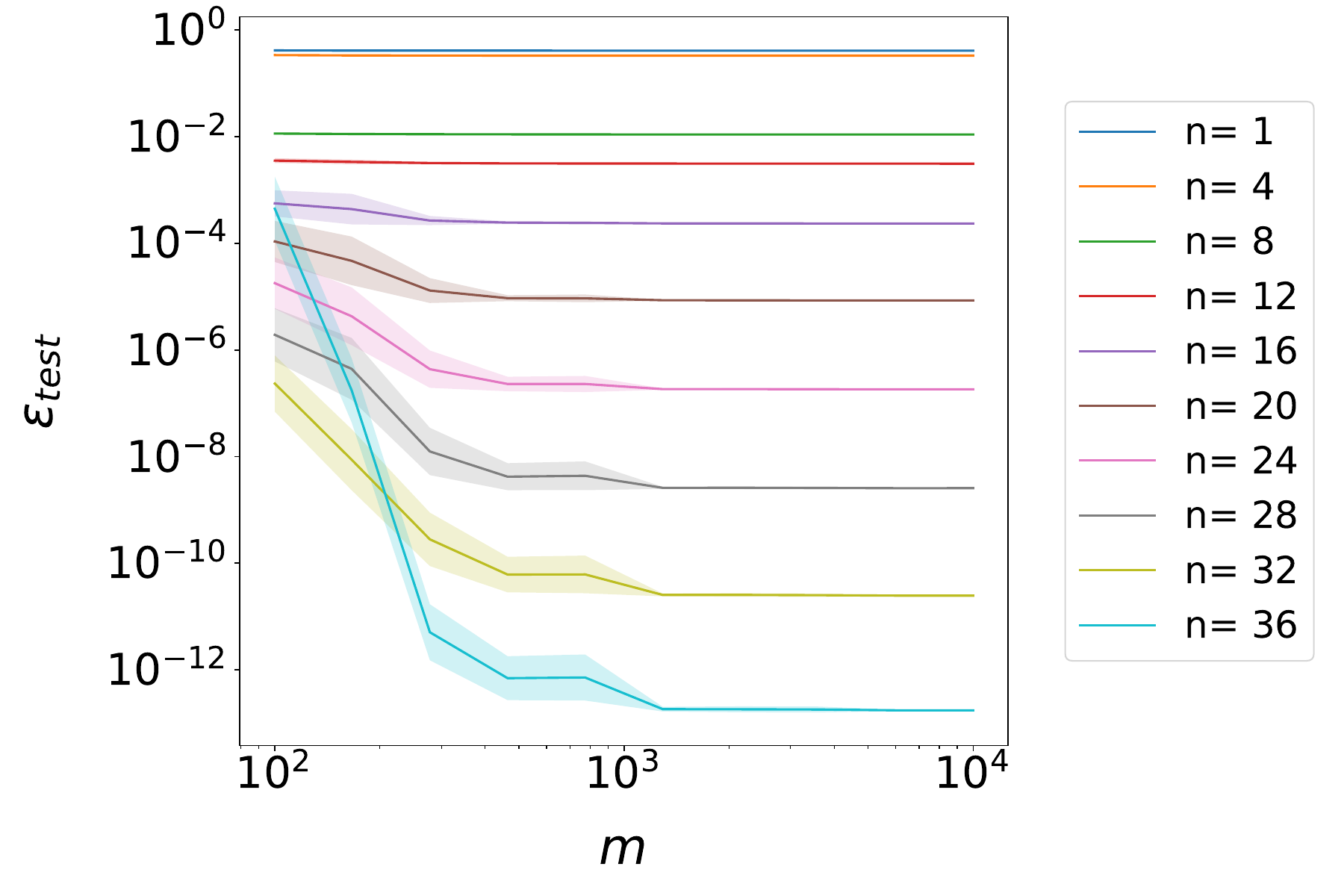}
  \caption{Comparison of reconstruction methods for $f_1(x) = \cos(10x)e^{-\pi x^2}$
  from samples. The left panel shows results using Deep Learning, while the right panel shows results using Least Squares. Here, $L$ denotes the number of layers in each NN, and $n$ denotes the order of the hyperbolic cross $\Lambda^{\rm HC}_{n-1}$.}
  \label{fig:1D} 
\end{figure}
In accordance with our theoretical guarantees, both methods successfully approximate this frequency-localized function, with the RMSE converging as the number of sampling points increases.
Moreover, the number of parameters in each reconstruction model significantly affects the RMSE. For Least Squares, increasing the number of Slepian basis functions (corresponding to increasing the order $n$) generally lowers test errors, with the only recorded exception being $n=36$. 
Similarly, for Deep Learning, increasing the number of network layers $L$ 
results in a noticeable error reduction. 
However, note that Least Squares typically requires fewer samples as well as significantly fewer parameters to achieve high accuracy. 
The best accuracy achieved by Least Squares (RMSE $\approx 10^{-12}$) is several orders of magnitude higher than that by Deep Learning (RMSE $\approx 10^{-4}$). In summary, in one dimension, Least Squares is much more effective than Deep Learning for the approximation of $f_1$.

\paragraph{Two-dimensional results.} 
Figure~\ref{fig:2D} illustrates the results of our second experiment on approximating the two-dimensional function $f_2$.
\begin{figure}[t]
    \centering
    \includegraphics[width = 0.49\textwidth]{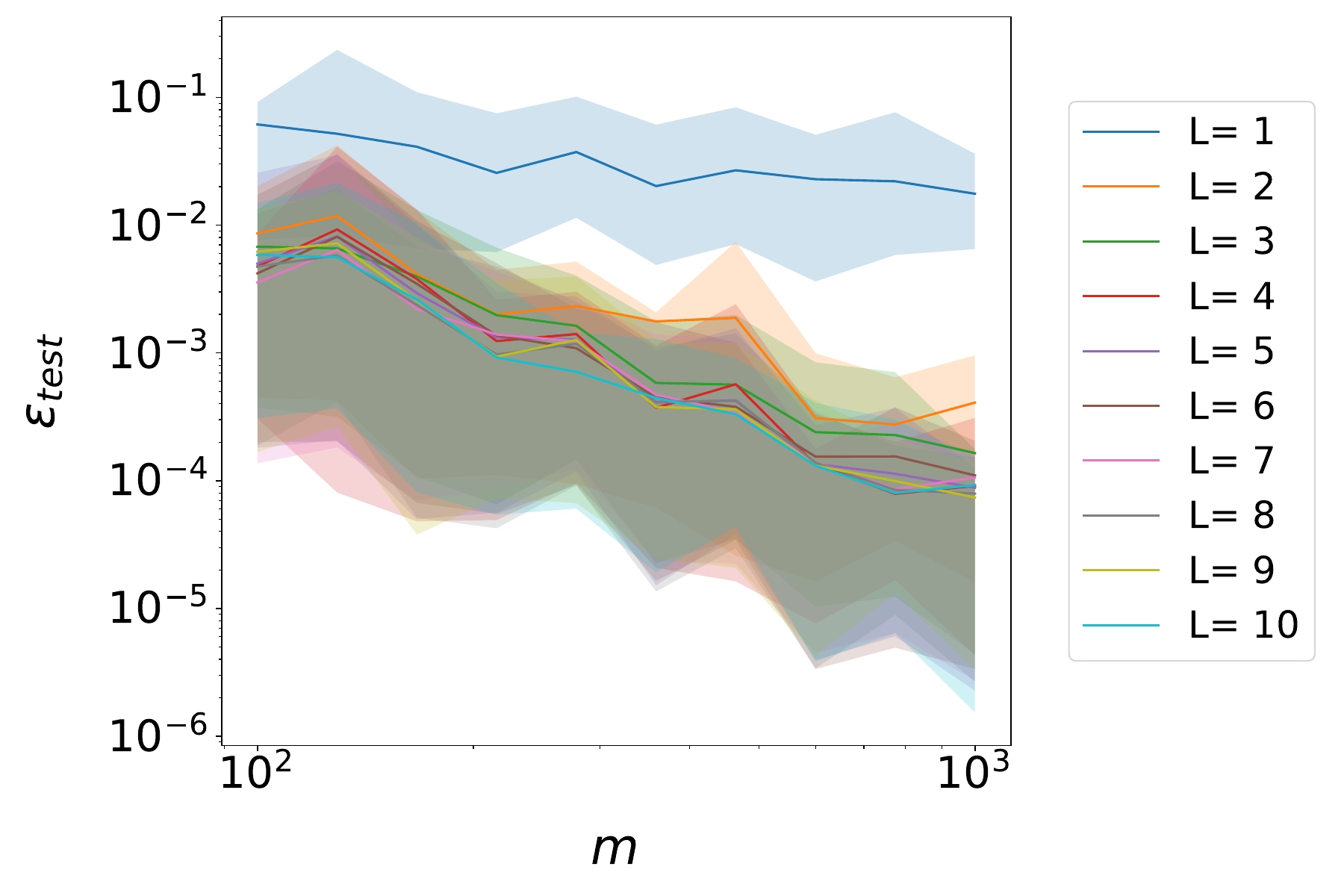}
    \includegraphics[width = 0.49\textwidth]{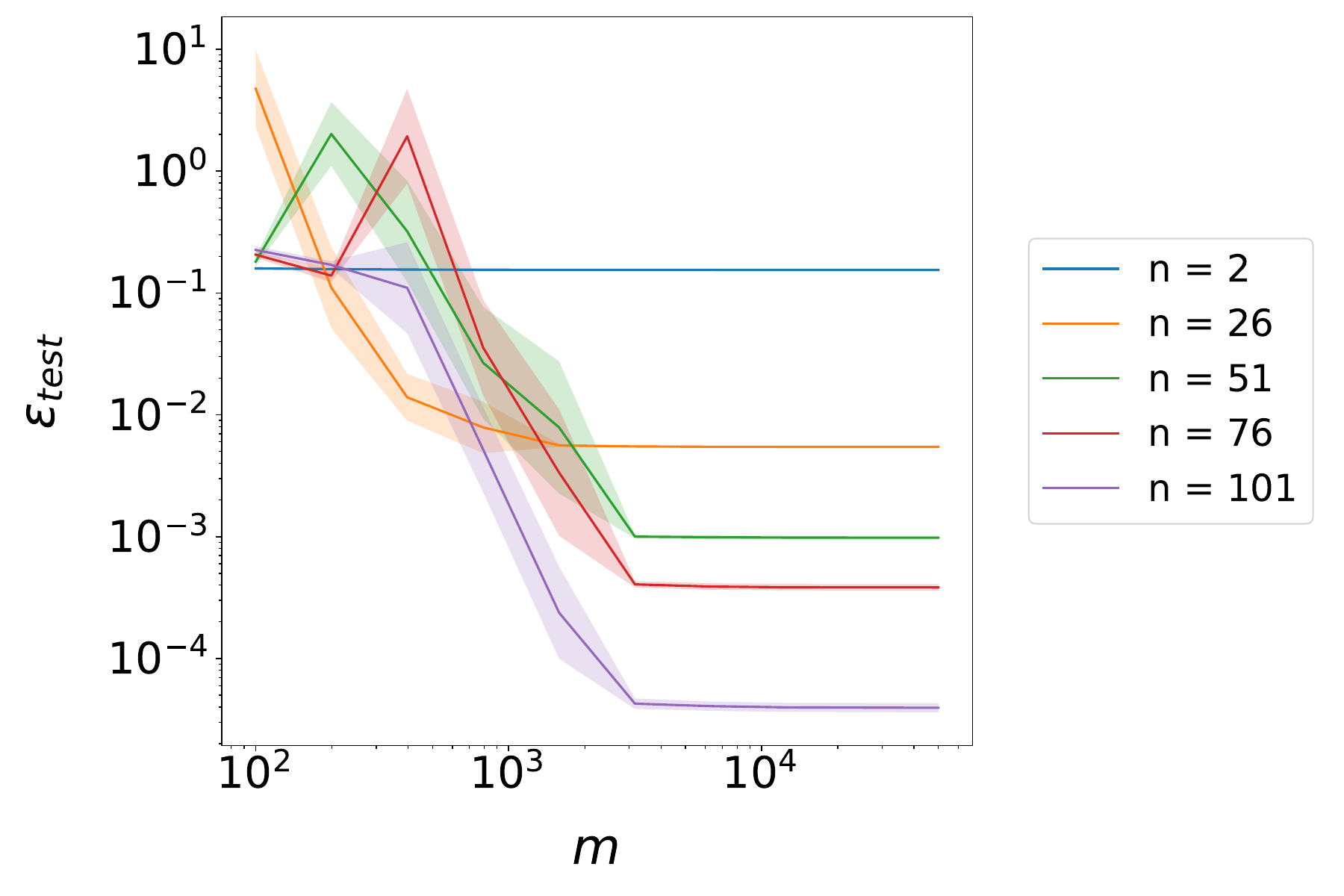} 
    \caption{Comparison of reconstruction methods for $f_2(x) = \cos(0.2x)\cos(0.2y)e^{-\pi (x^2+y^2)}$
    from samples. The left panel shows results using Deep Learning, while the right panel shows results using Least Squares. Here, $L$ denotes the number of layers in each NN, and $n$ denotes the order of the hyperbolic cross $\Lambda^{\rm HC}_{n-1}$.}
  \label{fig:2D} 
\end{figure}
Both Least Squares and Deep Learning successfully approximate this frequency-localized function, in line with our theory. 
As in the one-dimensional case, we observe RMSE convergence as a function of the number of samples $m$.
However, there are notable differences between the one- and two-dimensional cases. 
First, the best accuracy achieved by both methods is now much more similar. Both Least Squares and Deep Learning achieve a best mean RMSE of around $10^{-4}$. 
Interestingly, in some experiments, Deep Learning can attain a lower RMSE, as indicated by the size of the variance regions (the shaded regions in Figure~\ref{fig:2D}, left).
Hence, the accuracy gap between Least Squares and Deep Learning is much less pronounced in dimension two than in dimension one. Second, although the advantage of adding parameters to the model (for both Least Squares and Deep Learning) is still there, we observe some intriguing numerical phenomena. For Least Squares, considering a larger Slepian basis (i.e., larger order $n$) might not lead to a better accuracy (see the curve corresponding to $n=26$). Meanwhile, for Deep Learning, the benefit of increasing $L$ for accuracy becomes limited after $L\geq 6$.
In summary, dimensionality appears to play a crucial role when comparing Least Squares and Deep Learning. While a comprehensive numerical study is beyond the scope of this paper, our preliminary results suggest that 
the performance difference between the two reconstruction methods could increase in dimension three and higher.

\section{Proofs}
\label{sec:proofs}

To facilitate the proofs of Theorem~\ref{thm:leastsquaresampling} and Theorem~\ref{thm:ApproxSpan}, detailed in Section~\ref{sec:leastsquaresproof} and Section~\ref{sec:ProofTheo3} respectively, we first establish several key preliminary results in Section~\ref{sec:prelimformainthms} and Section~\ref{sec:prelimformainthm2}. 
Both theorems necessitate an understanding of the uniform norm of a normalized Slepian basis function within the centered $d$-dimensional unit cube. 
Additionally, Theorem~\ref{thm:ApproxSpan} relies on the polynomial approximability of neural networks and fundamental network operations, which are outlined in Appendix~\ref{appx:basicnetworks} for clarity and reference.

\subsection{A preliminary result for the main theorems} \label{sec:prelimformainthms}

In what follows, we recall from \eqref{eqdef:gammaw} the simplified notation $\gamma(\mathsf{w}) = \lceil \log_2(24\mathsf{w}) \rceil$.

\begin{proposition} \label{prop:gammaw} For any $\mathsf{w}\geq 1$, let $\{\varphi_{\mathsf{w},j}\}_{j\in\N_0}$ be the Slepian basis in $L_{ {\bf u}}^2([-1,1])$. 
Then we have
\begin{equation} \label{eq:varphijall}
    \|\varphi_{\mathsf{w},j}\|^2_{L^{\infty}([-1,1])}\leq \mathbbm{1}_{\{j=0\}} \Big(\frac{4\mathsf{w}}{\pi}\Big) + \mathbbm{1}_{\{j\in\mathbb{N}\}} (j+1)^{\gamma(\mathsf{w})}.
\end{equation}
\end{proposition}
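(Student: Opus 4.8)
The plan is to reduce everything to two ingredients: the elementary sup-norm inequality for bandlimited functions and the spectral information recorded in \eqref{eq:muj}, \eqref{eq:spectrumchi}, and \eqref{eq:mu_bounds}. First I would note that any $f\in{\rm PW}_{\mathsf{w}}$ satisfies $\|f\|_{L^\infty(\R)}^2\leq\frac{\mathsf{w}}{\pi}\|f\|_{L^2(\R)}^2$, which follows by representing $f$ via \eqref{eq:bandlimitedequiv}, applying Cauchy--Schwarz on $[-\mathsf{w},\mathsf{w}]$, and invoking Plancherel. Testing the eigenrelation $\mathcal{Q}_{\mathsf{w}}\varphi_{\mathsf{w},j}=\mu_{\mathsf{w},j}\varphi_{\mathsf{w},j}$ against $\varphi_{\mathsf{w},j}$ and using that $P_{\mathsf{w}}$ is the orthogonal projection onto ${\rm PW}_{\mathsf{w}}$ yields the concentration identity $\mu_{\mathsf{w},j}\|\varphi_{\mathsf{w},j}\|_{L^2(\R)}^2=\int_{-1}^1|\varphi_{\mathsf{w},j}|^2\,dx=2$, the last equality being the normalization $\|\varphi_{\mathsf{w},j}\|_{L^2_{\bf u}([-1,1])}=1$. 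Combining the two gives the master bound $\|\varphi_{\mathsf{w},j}\|_{L^\infty([-1,1])}^2\leq\frac{2\mathsf{w}}{\pi\mu_{\mathsf{w},j}}$.

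For $j=0$ this settles the claim provided $\mu_{\mathsf{w},0}\geq\tfrac12$, since the master bound then returns exactly $\frac{4\mathsf{w}}{\pi}$. I would obtain $\mu_{\mathsf{w},0}\geq\tfrac12$ from the lower estimate in \eqref{eq:mu_bounds} when $\mathsf{w}$ is large, and from monotonicity of $\mu_{\mathsf{w},0}$ in $\mathsf{w}$ together with a direct Rayleigh-quotient (or trace) estimate on the remaining compact range $1\leq\mathsf{w}\leq\mathsf{w}_0$.

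For $j\geq1$ I would split on the size of the eigenvalue. Let $j_0$ be the largest index with $\mu_{\mathsf{w},j}\geq\frac{1}{12\pi}$; by the monotonicity \eqref{eq:muj} this holds for all $0\le j\le j_0$, and the lower bound in \eqref{eq:mu_bounds} guarantees $j_0\gtrsim\mathsf{w}$. In this range the master bound gives $\|\varphi_{\mathsf{w},j}\|_{L^\infty([-1,1])}^2\leq 24\mathsf{w}$, and since $j\geq1$ forces $(j+1)^{\gamma(\mathsf{w})}\geq2^{\gamma(\mathsf{w})}\geq24\mathsf{w}$ by the definition \eqref{eqdef:gammaw}, the inequality follows. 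For the complementary range $j>j_0$ the eigenvalue is too small for the master bound, so I would pass to the Legendre expansion $\varphi_{\mathsf{w},j}=\sum_k\beta_k^{(j)}\overline{P}_k$, normalized so that $\sum_k|\beta_k^{(j)}|^2=1$ and $\|\overline{P}_k\|_{L^\infty([-1,1])}=\sqrt{2k+1}$, whence $\|\varphi_{\mathsf{w},j}\|_{L^\infty([-1,1])}\leq\sum_k|\beta_k^{(j)}|\sqrt{2k+1}$. Using the Sturm--Liouville eigenrelation \eqref{eqdef:Lw} together with $j(j+1)\leq\chi_{\mathsf{w},j}\leq j(j+1)+\mathsf{w}^2$ (both from the min--max characterization), the three-term recurrence obtained by expanding $x^2\overline{P}_k$ shows that $\beta_k^{(j)}$ decays super-geometrically once $k\gtrsim\sqrt{\chi_{\mathsf{w},j}}$; splitting the sum into a bulk of length $O(\sqrt{\chi_{\mathsf{w},j}})$ (handled by Cauchy--Schwarz) and a rapidly convergent tail gives $\|\varphi_{\mathsf{w},j}\|_{L^\infty([-1,1])}^2\lesssim\chi_{\mathsf{w},j}\lesssim (j+1)^2$, which for $j>j_0\gtrsim\mathsf{w}$ is dominated by $(j+1)^{\gamma(\mathsf{w})}$ because $\gamma(\mathsf{w})\geq5$.

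The main obstacle is this last step: converting the coupling recurrence for $(\beta_k^{(j)})$ into a clean, constant-explicit tail estimate, since the conclusion must carry the exact exponent $\gamma(\mathsf{w})$ with no spare constant. The delicate points are (i) ensuring the decay ratio is bounded below $1$ uniformly, which is valid only once $\chi_{\mathsf{w},j}\gtrsim\mathsf{w}^2$ and is precisely why the split is placed at $j_0\gtrsim\mathsf{w}$; and (ii) matching the two regimes near $j\approx j_0$, so that the generous exponent $\gamma(\mathsf{w})$ simultaneously absorbs the $24\mathsf{w}$ from the concentration regime and the $(j+1)^2$ from the Legendre regime, with the small-$\mathsf{w}$ corner $\mathsf{w}\in[1,\mathsf{w}_0]$ treated by direct means. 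If a ready-made $L^\infty$ bound on the prolate functions in terms of $\chi_{\mathsf{w},j}$ is already available in \citep{osipov2013prolate} or \citep{karnik2021improved}, it can be substituted for the recurrence analysis, reducing this step to bookkeeping.
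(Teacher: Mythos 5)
Your low-index regime is sound and essentially reproduces the paper's mechanism. The master bound $\|\varphi_{\mathsf{w},j}\|^2_{L^{\infty}([-1,1])}\leq 2\mathsf{w}/(\pi\mu_{\mathsf{w},j})$ is exactly the combination of \eqref{eq:key} and \eqref{eq:decayvarphi} in the paper's proof of Proposition~\ref{prop:Phi_j}, and your threshold $\mu_{\mathsf{w},j}\geq\tfrac{1}{12\pi}$, engineered so that the bound becomes $24\mathsf{w}\leq 2^{\gamma(\mathsf{w})}\leq(j+1)^{\gamma(\mathsf{w})}$ for $j\geq1$, is a clean substitute for the paper's monotone-function argument (Claim~\ref{claim}). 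However, even here you make the argument weaker than it needs to be: neither $\mu_{\mathsf{w},0}\geq\tfrac12$ nor ``$j_0\gtrsim\mathsf{w}$'' can be extracted from \eqref{eq:mu_bounds} with usable constants, because the constant $C$ there is an unspecified universal constant, so your compact-range patch (monotonicity of $\mu_{\mathsf{w},0}$ in $\mathsf{w}$ plus a Rayleigh-quotient estimate up to an unidentified $\mathsf{w}_0$) is asserted rather than proven. The paper instead invokes Landau's spectral-gap result (Lemma~\ref{lem:spliteigenrecall}), $\mu_{\mathsf{w},\lfloor 4\mathsf{w}\rfloor-1}\geq\tfrac12$, which by the ordering \eqref{eq:muj} settles $j=0$ in one line and would also hand you an explicit split point $j_0\geq\lfloor 4\mathsf{w}\rfloor-1$, repairing both soft spots at once.

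The genuine gap is the regime $j>j_0$. There the entire content of the claim is an estimate of the form $\|\varphi_{\mathsf{w},j}\|^2_{L^{\infty}([-1,1])}\lesssim\chi_{\mathsf{w},j}$ with an explicit, small constant, and your proposal does not prove it: the ``super-geometric decay'' of the Legendre coefficients past $k\gtrsim\sqrt{\chi_{\mathsf{w},j}}$ requires a minimal-solution (continued-fraction) analysis of the tridiagonal recurrence---forward iteration does not by itself single out the decaying solution---and carrying this out with constants sharp enough to be absorbed into $(j+1)^{\gamma(\mathsf{w})}$ is precisely the hard part, not bookkeeping. Note also that the ready-made coefficient bound the paper uses elsewhere, $|\beta_j^k|\leq\mu_{\mathsf{w},k}^{-1}2^{-j}$ for $j\geq 2(\lfloor e\mathsf{w}\rfloor+1)$ from \citep[Theorem~7.2]{osipov2013prolate}, cannot be borrowed here: its prefactor $1/\mu_{\mathsf{w},k}$ blows up super-exponentially exactly for the indices $k>j_0$ you are treating. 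What rescues this regime---and what the paper's Lemma~\ref{lem:higheigncase} actually does---is to cite \citep[Theorem~4.4]{osipov2013prolate} (if $j\geq 2\mathsf{w}/\pi$ then $\chi_{\mathsf{w},j}>\mathsf{w}^2$), \citep[Theorem~3.39]{osipov2013prolate} (in that case $\sup_{[-1,1]}|\varphi_{\mathsf{w},j}|=|\varphi_{\mathsf{w},j}(1)|$), and \citep[Theorem~3.1]{bonami2014uniform} ($|\varphi_{\mathsf{w},j}(1)|\leq\tfrac{12}{5}\sqrt{j+1}$), which gives the stronger linear-in-$j$ bound $\tfrac{144}{25}(j+1)$ with fully explicit constants. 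Your closing hedge that a ready-made $L^{\infty}$ bound ``can be substituted'' points at exactly these results, but as written the crux of your high-index case is outsourced to an analysis you neither perform nor cite, so the proposal is incomplete.
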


In preparation, we present a key a priori result, with its proof deferred to the end of this subsection.

\begin{proposition} \label{prop:Phi_j} Let $\mathsf{w}\geq 1$. 
Let $j^{\star}(\mathsf{w}):= \lfloor 4\mathsf{w} \rfloor -1$. 
Then it holds for all $j\in\N_0$ that
\begin{equation*} 
    \|\varphi_{\mathsf{w},j}\|_{L^{\infty}([-1,1])} \leq 
    \begin{cases} 
        2\sqrt{\frac{\mathsf{w}}{\pi}} &\text{ if } j\leq j^{\star}(\mathsf{w}),\\
        {}\\
        \frac{12}{5} \sqrt{j+1} &\text{ if } j> j^{\star}(\mathsf{w}).
    \end{cases}
\end{equation*}
\end{proposition}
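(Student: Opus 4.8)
The plan is to split the two index ranges and handle them with different tools: a reproducing-kernel (Nikolskii-type) estimate for the low indices $j\le j^{\star}(\mathsf{w})$, where $\varphi_{\mathsf{w},j}$ is still strongly frequency-concentrated, and the Sturm--Liouville energy method for the high indices $j>j^{\star}(\mathsf{w})$, where $\varphi_{\mathsf{w},j}$ behaves like a Legendre polynomial and its supremum migrates to the endpoint. First I would reduce to the half-interval $[0,1]$ using that each $\varphi_{\mathsf{w},j}$ has definite parity, and I would keep in play both descriptions of $\varphi_{\mathsf{w},j}$: as the $\mu_{\mathsf{w},j}$-eigenfunction of the integral operator $\mathcal{Q}_{\mathsf{w}}$ in \eqref{eqdef:Qw}, and as the $\chi_{\mathsf{w},j}$-eigenfunction of the differential operator $\mathcal{L}_{\mathsf{w}}$ in \eqref{eqdef:Lw}.

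For $j\le j^{\star}(\mathsf{w})$ I would exploit that $\varphi_{\mathsf{w},j}\in{\rm PW}_{\mathsf{w}}$, so it is reproduced by the sinc kernel $K_x(t)=\frac{\sin\mathsf{w}(x-t)}{\pi(x-t)}$: by Cauchy--Schwarz, $|\varphi_{\mathsf{w},j}(x)|^2\le\|K_x\|_{L^2(\R)}^2\,\|\varphi_{\mathsf{w},j}\|_{L^2(\R)}^2$, with $\|K_x\|_{L^2(\R)}^2=K_x(x)=\mathsf{w}/\pi$. The key identity is that the eigenvalue is exactly the concentration ratio, $\mu_{\mathsf{w},j}=\|\varphi_{\mathsf{w},j}\|_{L^2([-1,1])}^2/\|\varphi_{\mathsf{w},j}\|_{L^2(\R)}^2$, a standard consequence of $\mathcal{Q}_{\mathsf{w}}=P_{\mathsf{w}}Q_1$; together with the $L^2_{\bf u}$-normalization this gives $\|\varphi_{\mathsf{w},j}\|_{L^2(\R)}^2=2/\mu_{\mathsf{w},j}$ and hence $\|\varphi_{\mathsf{w},j}\|_{L^{\infty}[-1,1]}^2\le \tfrac{2\mathsf{w}}{\pi\,\mu_{\mathsf{w},j}}$. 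It then suffices to show $\mu_{\mathsf{w},j}\ge 1/2$ on this range, which I would extract from the eigenvalue lower bound in \eqref{eq:mu_bounds}; this yields exactly $\|\varphi_{\mathsf{w},j}\|_{L^{\infty}[-1,1]}\le 2\sqrt{\mathsf{w}/\pi}$.

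For $j>j^{\star}(\mathsf{w})$ I would introduce the energy function
\[
    G(x):=\varphi_{\mathsf{w},j}(x)^2+\frac{1-x^2}{\chi_{\mathsf{w},j}-\mathsf{w}^2x^2}\,\varphi_{\mathsf{w},j}'(x)^2,
\]
well defined once $\chi_{\mathsf{w},j}>\mathsf{w}^2$. Differentiating and substituting the ODE $((1-x^2)\varphi')'+(\chi_{\mathsf{w},j}-\mathsf{w}^2x^2)\varphi=0$ cancels the cross terms and leaves $G'(x)=\frac{2x(\chi_{\mathsf{w},j}+\mathsf{w}^2-2\mathsf{w}^2x^2)}{(\chi_{\mathsf{w},j}-\mathsf{w}^2x^2)^2}\,\varphi_{\mathsf{w},j}'(x)^2\ge 0$ on $[0,1]$. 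Since $G$ is nondecreasing and $G(1)=\varphi_{\mathsf{w},j}(1)^2\ge G(x)\ge\varphi_{\mathsf{w},j}(x)^2$, the supremum is attained at the endpoint: $\max_{[0,1]}|\varphi_{\mathsf{w},j}|=|\varphi_{\mathsf{w},j}(1)|$. The hypothesis $\chi_{\mathsf{w},j}>\mathsf{w}^2$ holds for $j>j^{\star}(\mathsf{w})$ by the a priori bound $\chi_{\mathsf{w},j}\ge j(j+1)$ combined with $j+1>\lfloor 4\mathsf{w}\rfloor$ (so $j(j+1)\ge(4\mathsf{w}-1)4\mathsf{w}>\mathsf{w}^2$). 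It then remains to bound the endpoint value; using the companion bound $\chi_{\mathsf{w},j}\le j(j+1)+\mathsf{w}^2\le c\,j(j+1)$ with $c$ close to $1$ on this range, together with the Legendre-type scaling $\varphi_{\mathsf{w},j}(1)^2\lesssim\sqrt{\chi_{\mathsf{w},j}}$, I would track constants to reach $|\varphi_{\mathsf{w},j}(1)|\le\frac{12}{5}\sqrt{j+1}$.

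The main obstacle is this last endpoint estimate $\varphi_{\mathsf{w},j}(1)^2\lesssim\sqrt{\chi_{\mathsf{w},j}}$: unlike the monotonicity of $G$, it requires quantitatively controlling the growth of the solution toward the regular singular point $x=1$ and tying the endpoint value back to the $L^2_{\bf u}$-normalization. For this I would either invoke the sharp PSWF bounds of \citep{osipov2013prolate} or carry out a direct comparison with the $L^2_{\bf u}$-normalized Legendre polynomial $\bar P_j$, whose endpoint value $\sqrt{2j+1}$ already sits comfortably below the target constant $12/5$. A secondary delicate point is verifying $\mu_{\mathsf{w},j}\ge 1/2$ throughout the first range from \eqref{eq:mu_bounds}, i.e.\ confirming that the eigenvalue plunge does not set in before $j^{\star}(\mathsf{w})$; this is where the explicit form of $j^{\star}(\mathsf{w})=\lfloor 4\mathsf{w}\rfloor-1$ is used.
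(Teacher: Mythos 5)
Your two-range skeleton coincides with the paper's proof: for $j\le j^{\star}(\mathsf{w})$, a pointwise bound for bandlimited functions (your reproducing-kernel step is equivalent to the paper's Fourier--Cauchy--Schwarz computation) combined with the identity ``eigenvalue $=$ energy-concentration ratio'' and the claim $\mu_{\mathsf{w},j}\ge 1/2$; for $j>j^{\star}(\mathsf{w})$, endpoint maximality under $\chi_{\mathsf{w},j}>\mathsf{w}^2$ followed by an endpoint bound (your energy-function monotonicity argument is a correct, self-contained re-derivation of \citep[Theorem~3.39]{osipov2013prolate}, which the paper merely cites inside Lemma~\ref{lem:higheigncase}, and your verification of $\chi_{\mathsf{w},j}>\mathsf{w}^2$ via $\chi_{\mathsf{w},j}\ge j(j+1)$ is a fine substitute for the paper's appeal to \citep[Theorem~4.4]{osipov2013prolate}). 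The problem is that each range rests on one external input, and in both cases the source you propose does not deliver it. For the low range, $\mu_{\mathsf{w},j}\ge 1/2$ for all $j\le\lfloor 4\mathsf{w}\rfloor-1$ cannot be extracted from \eqref{eq:mu_bounds}: that lower bound is stated only for $0\le n-1\le\lfloor 4\mathsf{w}/\pi\rfloor-1$, i.e.\ for $j\le\lfloor 4\mathsf{w}/\pi\rfloor-1$, which leaves the entire range $\lfloor 4\mathsf{w}/\pi\rfloor\le j\le\lfloor 4\mathsf{w}\rfloor-1$ uncovered; and even where it applies, the form $1-10\exp(-C\,|\lfloor 4\mathsf{w}/\pi\rfloor-n|)$ with an unspecified universal $C$ is vacuous near the plunge (for $\mathsf{w}=1$ the only admissible index is $n=1$, where the bound reads $\mu_{1,0}\ge-9$, while $j^{\star}(1)=3$). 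So \eqref{eq:mu_bounds} certifies the value $1/2$ for no index in the critical regime; this is not merely the ``delicate point'' you flag, it is a step that fails. The ingredient the paper uses instead is Landau's localization of the spectral plunge, Lemma~\ref{lem:spliteigenrecall}, which asserts exactly $\mu_{\mathsf{w},\lfloor 4\mathsf{w}\rfloor-1}\ge 1/2$ and, together with the ordering \eqref{eq:muj}, covers the whole low range.

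The second gap is the endpoint estimate $|\varphi_{\mathsf{w},j}(1)|\le\tfrac{12}{5}\sqrt{j+1}$, which you correctly identify as the main obstacle but leave unresolved. Your fallback of comparing with the normalized Legendre polynomial, whose endpoint value is $\sqrt{2j+1}$, is not an argument: $\varphi_{\mathsf{w},j}$ is not a polynomial, and no comparison or monotonicity principle is offered that would transfer the Legendre endpoint value to the PSWF; the heuristic $\varphi_{\mathsf{w},j}(1)^2\lesssim\sqrt{\chi_{\mathsf{w},j}}$ is precisely the statement that needs proof, with an explicit constant. The paper closes this step by citing \citep[Theorem~3.1]{bonami2014uniform}, which gives exactly $|\varphi_{\mathsf{w},j}(1)|\le\tfrac{12}{5}\sqrt{j+1}$ under the hypothesis $\chi_{\mathsf{w},j}\ge\mathsf{w}^2$ (your citation of \citep{osipov2013prolate} for this quantitative, non-asymptotic bound is misdirected; the $12/5$ constant in this form is due to Bonami and Karoui). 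If you replace your two black boxes by Lemma~\ref{lem:spliteigenrecall} and the Bonami--Karoui endpoint bound, your argument becomes a complete proof that essentially reproduces the paper's, with the added benefit of making the endpoint-maximality step self-contained.
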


\begin{remark} \label{rem:firsteigen}
The distinction between Proposition~\ref{prop:Phi_j} and Proposition~\ref{prop:gammaw} lies in the latter providing a more encompassing upper bound for $\|\varphi_{\mathsf{w},j}\|_{L^{\infty}([-1,1])}$ as a function of the index $j$, without relying on a critical division $j^{\star}(\mathsf{w})$ tied to the bandwidth $\mathsf{w}$.
Such a broader bound is needed in the proof of Theorem~\ref{thm:leastsquaresampling}. 
A frequently cited emblematic estimate indicates that, for all $j\in\mathbb{N}_0$ 
\begin{equation} \label{eq:misinfo}
    \|\varphi_{\mathsf{w},j}\|_{L^{\infty}([-1,1])}\leq C\sqrt{j};
\end{equation}
see for example \citep[Remark~2.4]{shkolnisky2006approximation}.
However, \eqref{eq:misinfo} holds only asymptotically (and a rectified version of \eqref{eq:misinfo} is given in \citep[Theorem~2.7]{xiao2001prolate}). Specifically, for sufficiently large values of $j$, beyond the critical index $j^{\star}(\mathsf{w})$, the global maximum of $|\varphi_{\mathsf{w},j}|$ on $[-1,1]$ shifts to the boundary of the interval, becoming equal to $|\varphi_{\mathsf{w},j}(1)|$, which is of order $\mathcal{O}(\sqrt{j})$, as numerically demonstrated in \citep[Theorem~8.4]{xiao2001prolate}.
This shifting behavior, as discussed in both \citep[Theorem~3.38]{xiao2001prolate}, \citep[Theorem~3.1]{bonami2014uniform}, occurs when $\mathsf{w}^2/\chi_{\mathsf{w},j} \leq 1$. The second theorem will be later utilized in our proof of Proposition~\ref{prop:Phi_j}.
For low indices $j$, \citep[Remark~3.1]{bonami2014uniform} implies that $\|\varphi_{\mathsf{w},j}\|_{L^{\infty}([-1,1])}\leq C\sqrt{\mathsf{w}}$.
Notably, $\varphi_{\mathsf{w},0}$ is even, positive on the interval $(-1,1)$ and attains its global maximum at the origin. Yet, for low indices $j$, including $j=0$, the behavior of $\varphi_{\mathsf{w},j}(0)$ is poorly understood.
\end{remark}

\begin{proof}[Proof of Proposition~\ref{prop:gammaw}]
Leveraging Proposition~\ref{prop:Phi_j}, the claim $\|\varphi_{\mathsf{w},0}\|_{L^{\infty}([-1,1])}^2 \leq 4\mathsf{w}/\pi$ becomes clear, leaving only the proof of \eqref{eq:varphijall} to be addressed for $j\in\N$.
Toward this end, we write
\begin{equation} \label{eq:varphLinftybd}
    \|\varphi_{\mathsf{w},j}\|^2_{L^{\infty}([-1,1])} \leq \Big(\frac{4\mathsf{w}}{\pi}\Big) \mathbbm{1}_{\{1\leq j\leq j^{\star}(\mathsf{w})\}} + \Big(\frac{12}{5}\Big)^2 (j+1) \mathbbm{1}_{\{j>j^{\star}(\mathsf{w})\}}.
\end{equation}
Let $\tilde{c} = (\frac{12}{5})^2$, and let
\begin{equation*} 
    h(x) := \frac{\log\big(24\mathsf{w}\mathbbm{1}_{\{x\leq j^{\star}(\mathsf{w})\}} + \tilde{c}(x+1) \mathbbm{1}_{\{x>j^{\star}(\mathsf{w})\}} \big)}{\log(x+1)}.
\end{equation*}
We claim the following.
\begin{claim} \label{claim}
    \qquad $h(x)$ \text{ is nonincreasing on } $[1,\infty)$.
\end{claim}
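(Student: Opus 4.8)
The plan is to exploit the piecewise structure of $h$ dictated by the indicator functions: on $[1,\infty)$ the critical index $j^{\star}(\mathsf{w})=\lfloor 4\mathsf{w}\rfloor-1$ splits the domain into $[1,j^{\star}(\mathsf{w})]$, where the numerator equals the constant $\log(24\mathsf{w})$, and $(j^{\star}(\mathsf{w}),\infty)$, where it equals $\log(\tilde{c}(x+1))$ with $\tilde{c}=(12/5)^2$. Since $\mathsf{w}\geq 1$ forces $j^{\star}(\mathsf{w})=\lfloor 4\mathsf{w}\rfloor-1\geq 3$, both pieces are genuinely present. I would first show $h$ is decreasing on each piece separately, and then verify that no upward jump occurs at the interface $x=j^{\star}(\mathsf{w})$; together these give the nonincreasing property on all of $[1,\infty)$.

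On the first piece, $h(x)=\log(24\mathsf{w})/\log(x+1)$ is a positive constant divided by $\log(x+1)$, which is strictly increasing and satisfies $\log(x+1)\geq\log 2>0$ for $x\geq 1$; since $\log(24\mathsf{w})>0$, the ratio is decreasing. On the second piece I would rewrite $h(x)=\log(\tilde{c}(x+1))/\log(x+1)=1+\log(\tilde{c})/\log(x+1)$, and observe that $\tilde{c}=144/25>1$ gives $\log(\tilde{c})>0$, so again the expression decreases as $\log(x+1)$ grows. Both reductions are routine once the positivity of $\log(24\mathsf{w})$ and $\log(\tilde{c})$ is noted.

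The crux of the argument, and the step I expect to be the main obstacle, is controlling the transition at $x=j^{\star}(\mathsf{w})$. Because both one-sided expressions share the same denominator $\log(j^{\star}(\mathsf{w})+1)>0$, the requirement $h(j^{\star}(\mathsf{w}))\geq\lim_{x\to j^{\star}(\mathsf{w})^{+}}h(x)$ collapses to the numerator inequality $24\mathsf{w}\geq\tilde{c}\,(j^{\star}(\mathsf{w})+1)$. I would verify this using $j^{\star}(\mathsf{w})+1=\lfloor 4\mathsf{w}\rfloor\leq 4\mathsf{w}$, so that
\[
\tilde{c}\,(j^{\star}(\mathsf{w})+1)\leq \frac{144}{25}\cdot 4\mathsf{w}=\frac{576}{25}\mathsf{w}=23.04\,\mathsf{w}\leq 24\mathsf{w}.
\]
This is precisely the inequality that pins down the constant $24$ appearing in $\gamma(\mathsf{w})=\lceil\log_2(24\mathsf{w})\rceil$, and it is the delicate numerical balance that makes the claim true.

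Finally, I would assemble the three facts into the full monotonicity statement. For $x_1<x_2$ both in the same piece, decreasingness was already established. For $x_1\leq j^{\star}(\mathsf{w})<x_2$, the chain
\[
h(x_1)\geq h(j^{\star}(\mathsf{w}))\geq \lim_{x\to j^{\star}(\mathsf{w})^{+}}h(x)\geq h(x_2)
\]
holds, where the outer inequalities use decreasingness on each piece and the middle inequality is the jump estimate just verified. Hence $h$ is nonincreasing on $[1,\infty)$, which is exactly what is needed to later bound $h(j)\leq h(1)=\log_2(24\mathsf{w})\leq\gamma(\mathsf{w})$ and close the proof of Proposition~\ref{prop:gammaw}.
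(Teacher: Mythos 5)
Your proposal is correct and follows essentially the same route as the paper: piecewise decreasingness of $h$ on $[1,j^{\star}(\mathsf{w})]$ and $(j^{\star}(\mathsf{w}),\infty)$, a reduction of the cross-piece comparison to the interface at $x=j^{\star}(\mathsf{w})$, and the same crucial numerical bound $\tilde{c}\,\lfloor 4\mathsf{w}\rfloor \leq \tilde{c}\cdot 4\mathsf{w} = 23.04\,\mathsf{w}\leq 24\mathsf{w}$ that fixes the constant $24$ in $\gamma(\mathsf{w})$. Your three-term chain $h(x_1)\geq h(j^{\star}(\mathsf{w}))\geq \lim_{x\to j^{\star}(\mathsf{w})^{+}}h(x)\geq h(x_2)$ is just a slightly more explicit packaging of the paper's argument, not a different one.
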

Observe that the claim is evident for either the case $1\leq x\leq j^{\star}(\mathsf{w})$ or for the case $x > j^{\star}(\mathsf{w})$. 
Hence it suffices to show, if $1\leq x_1\leq j^{\star}(\mathsf{w})< x_2\leq j^{\star}(\mathsf{w})+1$, then $h(x_2)\leq h(x_1)$, or:
\begin{equation*}
    \frac{\log(24\mathsf{w})}{\log(x_1+1)} \geq \frac{\log(\tilde{c}(x_2+1))}{\log(x_2+1)}.
\end{equation*}
Since the right-hand side is a continuous and nonincreasing function, it also suffices if we show
\begin{equation} \label{eq:resolvent}
    \frac{\log(24\mathsf{w})}{\log(x_1+1)} \geq \lim_{x_2\uparrow j^{\star}(\mathsf{w})} \frac{\log(\tilde{c}(x_2+1))}{\log(x_2+1)} = \frac{\log(\tilde{c}(j^{\star}(\mathsf{w})+1))}{\log(j^{\star}(\mathsf{w})+1)} = \frac{\log(\tilde{c}\lfloor 4\mathsf{w}\rfloor)}{\log(\lfloor 4\mathsf{w}\rfloor)}.
\end{equation}
Note that $\log(\lfloor 4\mathsf{w}\rfloor)=\log(j^{\star}(\mathsf{w})+1)\geq \log(x_1+1)$, as $j^{\star}(\mathsf{w})\geq x_1$. Therefore \eqref{eq:resolvent} follows, once $\log(24\mathsf{w})\geq \log (\tilde{c}\lfloor 4\mathsf{w}\rfloor)$ holds.
However, this is immediate, since
\begin{equation*}
    \log(24\mathsf{w}) \geq \log (23.04\mathsf{w}) = \log(\tilde{c}4\mathsf{w}) \geq \log(\tilde{c}\lfloor 4\mathsf{w}\rfloor).
\end{equation*}
Thus, Claim~\ref{claim} holds, and we get:
\begin{equation} \label{eq:resolvent1}
    \frac{\log\big(24\mathsf{w}\mathbbm{1}_{\{x\leq j^{\star}(\mathsf{w})\}} + \tilde{c}(x+1) \mathbbm{1}_{\{x>j^{\star}(\mathsf{w})\}} \big)}{\log(x+1)} \leq \log_2(24\mathsf{w}) \leq \gamma(\mathsf{w}), \qquad \forall x\geq 1.
\end{equation}
Combining \eqref{eq:varphLinftybd}, \eqref{eq:resolvent1}, we infer that
\begin{equation*} \label{eq:caseN}
    \|\varphi_{\mathsf{w},j}\|^2_{L^{\infty}([-1,1])} \leq 24\mathsf{w}\mathbbm{1}_{\{1\leq j\leq j^{\star}(\mathsf{w})\}} + \tilde{c}(j+1)\mathbbm{1}_{\{j>j^{\star}(\mathsf{w})\}} \leq (j+1)^{\gamma(\mathsf{w})},
\end{equation*}
as wanted.
\end{proof}

To prove Proposition~\ref{prop:Phi_j}, we rely on the following two well-known lemmas. The first, Lemma~\ref{lem:spliteigenrecall} (see, for instance, \citep{landau1965eigenvalue, landau1993density}) concerns the spectral gap in the eigenvalues $\mu_{\mathsf{w},j}$ \eqref{eq:muj}, while the second, Lemma~\ref{lem:higheigncase}, pertains to the magnitude of $\varphi_{\mathsf{w},j}$ for sufficiently large index $j$.
The proof of Proposition~\ref{prop:Phi_j} will come after the presentation of these lemmas.

\begin{lemma} \label{lem:spliteigenrecall} Let $\mathcal{Q}_{\mathsf{w}}$ be defined in \eqref{eqdef:Qw}. Then the eigenvalues $\mu_{\mathsf{w},j}$ of $\mathcal{Q}_{\mathsf{w}}$ satisfy
\begin{equation} \label{crucialupperbound} 
    \mu_{\mathsf{w}, \lfloor 4\mathsf{w}\rfloor-1}\geq \frac{1}{2} \geq \mu_{\mathsf{w}, \lceil 4\mathsf{w}\rceil}. 
\end{equation}
\end{lemma}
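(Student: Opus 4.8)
The plan is to read Lemma~\ref{lem:spliteigenrecall} as a statement that localizes the ``plunge'' of the spectrum $\{\mu_{\mathsf{w},j}\}_{j\in\N_0}$ relative to the fixed threshold $1/2$: introducing the counting function $N(\alpha):=\texttt{\#}\{j\in\N_0:\mu_{\mathsf{w},j}\geq\alpha\}$ for $\alpha\in(0,1)$, and recalling from \eqref{eq:muj} that the $\mu_{\mathsf{w},j}$ are strictly decreasing and lie in $(0,1)$, the two claimed inequalities are together equivalent to the two-sided bound $\lfloor 4\mathsf{w}\rfloor\leq N(1/2)\leq\lceil 4\mathsf{w}\rceil$ at the level $\alpha=1/2$. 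I would therefore split the proof into an upper bound and a lower bound on $N(1/2)$, treating the threshold-crossing index as the object to be pinned down, and handle the (finitely many) small-$\mathsf{w}$ values by direct inspection so that all asymptotic estimates may be used for $\mathsf{w}$ beyond a fixed constant.

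First I would dispatch the right inequality $\mu_{\mathsf{w},\lceil 4\mathsf{w}\rceil}\leq\tfrac12$, which only asks that the index $\lceil 4\mathsf{w}\rceil$ already lie past the plunge. The cleanest route is the exponential tail estimate \eqref{eq:mu_bounds}, adapted from \citep{karnik2021improved}: applying its first branch with $n-1=\lceil 4\mathsf{w}\rceil$ (which satisfies the hypothesis $n-1\geq\lceil 4\mathsf{w}/\pi\rceil$) yields $\mu_{\mathsf{w},\lceil 4\mathsf{w}\rceil}\leq 10\exp\!\big(-C(\lceil 4\mathsf{w}\rceil-\lceil 4\mathsf{w}/\pi\rceil)\big)$, and since the gap in the exponent grows linearly in $\mathsf{w}$, this is well below $1/2$ once $\mathsf{w}$ exceeds a fixed constant. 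One can corroborate this with the trace identity: because $\mathcal{Q}_{\mathsf{w}}$ is positive, self-adjoint, and trace-class on ${\rm PW}_{\mathsf{w}}$ with kernel $\tfrac{\sin\mathsf{w}(x-t)}{\pi(x-t)}$ of constant diagonal $\mathsf{w}/\pi$ [see \eqref{eqdef:Qw}], summing eigenvalues against the trace controls the number of large ones.

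The lower inequality $\mu_{\mathsf{w},\lfloor 4\mathsf{w}\rfloor-1}\geq\tfrac12$ is the main obstacle, since it asserts that the eigenvalues stay bounded away from $0$ all the way up to the critical index, and neither the trace comparison nor the tail estimate speaks to this direction. Here I would invoke the quantitative eigenvalue-distribution results of Landau \citep{landau1965eigenvalue, landau1993density} — the non-asymptotic refinement of the Landau--Widom law, which locates $N(\alpha)$ near the time--bandwidth (Shannon) number with a plunge region of only logarithmic width — and specialize them to $\alpha=\tfrac12$ to obtain the matching lower bound $N(1/2)\geq\lfloor 4\mathsf{w}\rfloor$, equivalently a lower bound of the form $\mu_{\mathsf{w},j}\geq 1-10\exp(-C|\cdot|)$ valid for indices $j$ below the critical one.

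The delicate point throughout is that these estimates are sharp only up to the universal constant $C$ and carry prefactors that are uninformative precisely at the crossing index itself. I therefore expect the bulk of the work to be quantitative bookkeeping: verifying that the separation between the critical index $\lfloor 4\mathsf{w}\rfloor-1$ and the ``safe'' indices at which Landau's one-sided bounds become effective is large enough (uniformly in $\mathsf{w}\geq 1$, after clearing the small-$\mathsf{w}$ cases by direct computation) for the two one-sided estimates to combine into the clean threshold-straddling statement $\mu_{\mathsf{w},\lfloor 4\mathsf{w}\rfloor-1}\geq\tfrac12\geq\mu_{\mathsf{w},\lceil 4\mathsf{w}\rceil}$ of \eqref{crucialupperbound}.
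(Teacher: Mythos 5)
Your overall route---reading \eqref{crucialupperbound} as a two-sided bound on the counting function $N(1/2)$ and appealing to Landau's plunge-localization results---is, at its core, exactly how the paper treats this statement: the paper gives no proof at all, presenting the lemma as a known fact with the citations \citep{landau1965eigenvalue, landau1993density}. The problem is that the step you defer to ``quantitative bookkeeping'' is not bookkeeping, and it cannot be carried out. Landau's theorem localizes the plunge of the spectrum at the \emph{Shannon number} of the concentration problem, and for the operator $\mathcal{Q}_{\mathsf{w}}$ normalized as in \eqref{eqdef:Qw} (kernel $\sin(\mathsf{w}(x-t))/(\pi(x-t))$ on $[-1,1]$) that number is
\[
\operatorname{tr}(\mathcal{Q}_{\mathsf{w}}) \;=\; \int_{-1}^{1}\frac{\mathsf{w}}{\pi}\,\mathrm{d}t \;=\; \frac{2\mathsf{w}}{\pi},
\]
not $4\mathsf{w}$. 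What Landau's result delivers is $\mu_{\mathsf{w},\lfloor 2\mathsf{w}/\pi\rfloor-1}\geq 1/2\geq\mu_{\mathsf{w},\lceil 2\mathsf{w}/\pi\rceil}$; it does not ``specialize'' to $N(1/2)\geq\lfloor 4\mathsf{w}\rfloor$, and no tuning of constants closes that gap, because $4\mathsf{w}$ exceeds the Shannon number by a factor of $2\pi$.

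Worse, the trace identity you invoke to ``corroborate'' the easy inequality actually refutes the hard one. Since $\mathcal{Q}_{\mathsf{w}}$ is positive and trace-class with $\operatorname{tr}(\mathcal{Q}_{\mathsf{w}})=2\mathsf{w}/\pi$, Markov's inequality gives $\tfrac12 N(1/2)\leq\sum_{j}\mu_{\mathsf{w},j}=2\mathsf{w}/\pi$, i.e.\ $N(1/2)\leq 4\mathsf{w}/\pi$. For every $\mathsf{w}\geq 1$ we have $\lfloor 4\mathsf{w}\rfloor\geq 4\mathsf{w}-1>4\mathsf{w}/\pi$, hence $N(1/2)<\lfloor 4\mathsf{w}\rfloor$; by the strict ordering \eqref{eq:muj}, the inequality $\mu_{\mathsf{w},\lfloor 4\mathsf{w}\rfloor-1}\geq 1/2$ is therefore impossible under the normalization \eqref{eqdef:Qw}. (This points to an inconsistency in the paper itself: the index $\lfloor 4\mathsf{w}\rfloor$ in the lemma appears to stem from a unit-conversion slip, since Landau's theorem---and the paper's own bounds \eqref{eq:mu_bounds} adapted from \citep{karnik2021improved}---place the plunge at $O(\mathsf{w}/\pi)$; the statement should involve $\lfloor 2\mathsf{w}/\pi\rfloor$ and $\lceil 2\mathsf{w}/\pi\rceil$.) Your argument for the right-hand inequality also has a lesser but real gap: the constant $C$ in \eqref{eq:mu_bounds} is an unspecified universal constant, so the bound $10\exp\big(-C(\lceil 4\mathsf{w}\rceil-\lceil 4\mathsf{w}/\pi\rceil)\big)\leq 1/2$ cannot be verified for any concrete $\mathsf{w}$, and ``handling small $\mathsf{w}$ by direct inspection'' is vacuous since $\mathsf{w}$ ranges over the continuum $[1,\infty)$ and the eigenvalues have no closed form. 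The clean route for that side is simple monotonicity from Landau's result at the correct index: $\mu_{\mathsf{w},\lceil 4\mathsf{w}\rceil}\leq\mu_{\mathsf{w},\lceil 2\mathsf{w}/\pi\rceil}\leq 1/2$.
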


Lemma~\ref{lem:higheigncase} below is a version of an estimate commonly referenced in the literature but seldom proved. 
For the convenience of the reader, we offer a brief proof sketch, supplemented with references for more rigorous details.

\begin{lemma} \label{lem:higheigncase} Let $j\in\N_0$. Then for every $j\geq \frac{2\mathsf{w}}{\pi}$, the following holds
\begin{equation} \label{eq:psijlarge}
    \|\varphi_{\mathsf{w},j}\|_{L^{\infty}([-1,1])} \leq \frac{12}{5} \sqrt{j+1}.
\end{equation}
\end{lemma}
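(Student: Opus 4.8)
The plan is to show that once $j$ is large the global maximum of $|\varphi_{\mathsf{w},j}|$ on $[-1,1]$ has migrated to the endpoints, and then to bound that boundary value by $\tfrac{12}{5}\sqrt{j+1}$. I would work throughout with the Sturm--Liouville formulation $\mathcal{L}_{\mathsf{w}}\varphi_{\mathsf{w},j}=\chi_{\mathsf{w},j}\varphi_{\mathsf{w},j}$ from \eqref{eqdef:Lw}--\eqref{eq:spectrumchi} together with the normalization $\|\varphi_{\mathsf{w},j}\|_{L^2_{\bf u}([-1,1])}=1$, i.e.\ $\int_{-1}^1\varphi_{\mathsf{w},j}^2\,{\rm d}x=2$ and $\int_0^1\varphi_{\mathsf{w},j}^2\,{\rm d}x=1$ by parity. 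The first step is the spectral transition: I would show that $j\geq \tfrac{2\mathsf{w}}{\pi}$ forces $\chi_{\mathsf{w},j}\geq\mathsf{w}^2$ (equivalently $q:=\mathsf{w}^2/\chi_{\mathsf{w},j}\leq 1$), which by Remark~\ref{rem:firsteigen} is exactly the regime where the maximum shifts to the boundary. The cleanest rigorous route is a Sturm oscillation count: $\varphi_{\mathsf{w},j}$ has exactly $j$ zeros in $(-1,1)$, so it suffices to check that any solution of $\mathcal{L}_{\mathsf{w}}y=\mathsf{w}^2 y$ has at most $\approx\tfrac{2\mathsf{w}}{\pi}$ zeros. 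In the variable $x=\cos\theta$ this equation becomes $u''+\cot\theta\,u'+\mathsf{w}^2\sin^2\theta\,u=0$, whose oscillation frequency is $\mathsf{w}\sin\theta$, giving a zero count $\tfrac1\pi\int_0^\pi \mathsf{w}\sin\theta\,{\rm d}\theta=\tfrac{2\mathsf{w}}{\pi}$; Sturm comparison then yields $\chi_{\mathsf{w},j}>\mathsf{w}^2$ for $j>\tfrac{2\mathsf{w}}{\pi}$. This is the ``commonly referenced but seldom proved'' fact, and I would cite \citep{osipov2013prolate} and \citep{bonami2014uniform} for its quantitative form.

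For the second step --- that the maximum occurs at the endpoints --- I can give a self-contained argument. Setting $p(x)=1-x^2$, $r(x)=\chi_{\mathsf{w},j}-\mathsf{w}^2x^2$ and
\[
    G(x):=\varphi_{\mathsf{w},j}(x)^2+\frac{p(x)\,\varphi_{\mathsf{w},j}'(x)^2}{r(x)},
\]
the hypothesis $\chi_{\mathsf{w},j}\geq\mathsf{w}^2$ keeps $r>0$ on $[-1,1]$, and a direct computation using $(p\varphi_{\mathsf{w},j}')'=(\mathsf{w}^2x^2-\chi_{\mathsf{w},j})\varphi_{\mathsf{w},j}=-r\varphi_{\mathsf{w},j}$ gives $G'(x)=-\tfrac{(pr)'(x)}{r(x)^2}\varphi_{\mathsf{w},j}'(x)^2$, where $(pr)'(x)=-2x(\chi_{\mathsf{w},j}+\mathsf{w}^2-2\mathsf{w}^2x^2)\leq 0$ for $x\in[0,1]$ since $\chi_{\mathsf{w},j}+\mathsf{w}^2-2\mathsf{w}^2x^2\geq\chi_{\mathsf{w},j}-\mathsf{w}^2\geq 0$. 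Hence $G$ is nondecreasing on $[0,1]$, so $\varphi_{\mathsf{w},j}(x)^2\leq G(x)\leq G(1)=\varphi_{\mathsf{w},j}(1)^2$ (the boundary term being clean because $p(1)=0$); by parity the same holds on $[-1,0]$. Thus $\|\varphi_{\mathsf{w},j}\|_{L^\infty([-1,1])}=|\varphi_{\mathsf{w},j}(1)|$, recovering the shifting phenomenon of \citep[Theorem~3.1]{bonami2014uniform} and \citep[Theorem~3.38]{xiao2001prolate} by elementary means.

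The third step --- bounding $|\varphi_{\mathsf{w},j}(1)|=\sqrt{G(1)}$ by $\tfrac{12}{5}\sqrt{j+1}$ --- is the one I expect to be the main obstacle. The difficulty is that the weight $p(x)=1-x^2$ degenerates at the endpoint, so the soft estimates available, namely the normalization $\int_0^1\varphi_{\mathsf{w},j}^2\,{\rm d}x=1$ and the energy identity $\int_0^1 p\,(\varphi_{\mathsf{w},j}')^2\,{\rm d}x=\chi_{\mathsf{w},j}-\mathsf{w}^2\int_0^1 x^2\varphi_{\mathsf{w},j}^2\,{\rm d}x\leq\chi_{\mathsf{w},j}$, do not by themselves control the \emph{unweighted} boundary value uniformly as $q\uparrow 1$. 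I would therefore extract the explicit constant from the sharp turning-point (WKB) estimates for prolate functions in \citep[Theorem~3.1]{bonami2014uniform}, cf.\ \citep[Theorem~8.4]{xiao2001prolate}, which give $|\varphi_{\mathsf{w},j}(1)|$ of order $\sqrt{j}$ with a controllable prefactor; comparing with the ${\bf u}$-normalized Legendre endpoint value $\sqrt{2j+1}\leq\sqrt{2(j+1)}$ and using that in the range $j\geq\tfrac{2\mathsf{w}}{\pi}$ (so $\mathsf{w}\leq\tfrac{\pi}{2}j$ and $\chi_{\mathsf{w},j}$ stays comparable to $j^2$) one can verify that the constant may be taken to be $\tfrac{12}{5}$. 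Because this final step rests on the delicate boundary asymptotics rather than a soft integral inequality, I would present it as a sketch and defer the fully quantitative constant to the cited references, exactly as the phrase ``seldom proved'' in the statement anticipates.
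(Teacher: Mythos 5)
Your proposal is correct and follows the same three-step skeleton as the paper's proof: (i) the spectral transition $j\geq \frac{2\mathsf{w}}{\pi} \Rightarrow \chi_{\mathsf{w},j}>\mathsf{w}^2$, (ii) the localization of the maximum of $|\varphi_{\mathsf{w},j}|$ at $x=\pm 1$, and (iii) the endpoint bound $|\varphi_{\mathsf{w},j}(1)|\leq \frac{12}{5}\sqrt{j+1}$, for which both you and the paper ultimately invoke \citep[Theorem~3.1]{bonami2014uniform}. The genuine difference is step (ii): the paper simply cites \citep[Theorem~3.39]{osipov2013prolate}, whereas you give a self-contained Sonin-type argument, showing that $G(x)=\varphi_{\mathsf{w},j}(x)^2+p(x)\varphi_{\mathsf{w},j}'(x)^2/r(x)$ (with $p(x)=1-x^2$, $r(x)=\chi_{\mathsf{w},j}-\mathsf{w}^2x^2$) satisfies $G'=-\frac{(pr)'}{r^2}(\varphi_{\mathsf{w},j}')^2\geq 0$ on $[0,1]$, whence $\varphi_{\mathsf{w},j}(x)^2\leq G(1)=\varphi_{\mathsf{w},j}(1)^2$, with parity handling $[-1,0]$. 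I checked this computation and it is right; it buys an elementary, citation-free proof of the "maximum shifts to the boundary" phenomenon discussed in Remark~\ref{rem:firsteigen}, at the cost of no additional length. Two small caveats: your monotonicity step needs the \emph{strict} inequality $\chi_{\mathsf{w},j}>\mathsf{w}^2$ (if $\chi_{\mathsf{w},j}=\mathsf{w}^2$ then $r(1)=0$ and $G(1)$ is ill-defined), which is exactly what \citep[Theorem~4.4]{osipov2013prolate} supplies and what the paper's \eqref{eq:leg1} records, so you should state it strictly; and your oscillation-count justification of step (i) is a WKB-style heuristic (the $\cot\theta\,u'$ term is not handled), so, as you yourself indicate, that step still rests on the citation rather than on the sketch. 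With those provisos, your argument establishes the lemma at the same (sketch) level of rigor as the paper, and is arguably more informative where it replaces \citep[Theorem~3.39]{osipov2013prolate}.
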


\begin{proof}[Sketch of proof]
Recall that the Slepian basis functions $\varphi_{\mathsf{w},j}$ serve as eigenfunctions for the operator $\mathcal{L}_{\mathsf{w}}$ \eqref{eqdef:Lw}, with the corresponding eigenvalues denoted by $\chi_{\mathsf{w},j}$ \eqref{eq:spectrumchi}.
Let $j\geq \frac{2\mathsf{w}}{\pi}$. Then it follows from \citep[Theorem 4.4]{osipov2013prolate} that
\begin{equation} \label{eq:leg1}
    \chi_{\mathsf{w},j} > \mathsf{w}^2.
\end{equation}
On the one hand, when \eqref{eq:leg1} is satisfied, we can employ \citep[Theorem 3.39]{osipov2013prolate} to obtain
\begin{equation} \label{eq:leg2}
    \|\varphi_{\mathsf{w},j}\|_{L^{\infty}([-1,1])} = |\varphi_{\mathsf{w},j}(1)|.
\end{equation}
On the other hand, according to \citep[Theorem~3.1]{bonami2014uniform}, \eqref{eq:leg1} also yields 
\begin{equation} \label{eq:leg3}
    |\varphi_{\mathsf{w},j}(1)| \leq \frac{12}{5}\sqrt{j+1}.
\end{equation}
Combining \eqref{eq:leg1}, \eqref{eq:leg2}, \eqref{eq:leg3}, we acquire \eqref{eq:psijlarge}, as desired.
\end{proof}

\begin{proof}[Proof of Proposition~\ref{prop:Phi_j}]
Extend the uniform measure ${\bf u}$ on $[-1,1]$ to a measure on $\R$ by setting ${\rm d}{\bf u}(t) = \frac{1}{2} {\rm d}t$.
Under the standard energy concentration interpretation, $\mu_{\mathsf{w},j}$ represents the fraction of the total energy of $\varphi_{\mathsf{w},j}$ contained in $[-1,1]$ (see Section~\ref{sec:bandlimited}). Therefore, with the normalization $\|\varphi_{\mathsf{w},j} \|_{L_{\bf u}^2(\R)}=1$, we obtain
\begin{equation} \label{eq:decayvarphi}
    \mu_{\mathsf{w},j} = \frac{\|\varphi_{\mathsf{w},j} \|_{L^2([-1,1])}}{\|\varphi_{\mathsf{w},j} \|_{L^2(\R)}} = \frac{1}{\|\varphi_{\mathsf{w},j} \|_{L_{\bf u}^2(\R)}}.
\end{equation}
Since $\varphi_{\mathsf{w},j}\in{\rm PW}_{\mathsf{w}}$, \eqref{eq:bandlimitedequiv} holds. It follows that
\begin{equation} \label{eq:key}
    |\varphi_{\mathsf{w},j}(x)| = \bigg|\frac{1}{2\pi}\int_{-\mathsf{w}}^{\mathsf{w}} \widehat{\varphi_{\mathsf{w},j}}(t)e^{ixt}\,dt\bigg|
    \leq \sqrt{\frac{\mathsf{w}}{\pi}} \|\varphi_{\mathsf{w},j}\|_{L^2(\R)} = \sqrt{\frac{2\mathsf{w}}{\pi}}\|\varphi_{\mathsf{w},j}\|_{L_{\bf u}^2(\R)}, \qquad \forall x\in\R.
\end{equation}
For $j\leq \lfloor 4\mathsf{w}\rfloor-1$, applying Lemma~\ref{lem:spliteigenrecall} together with \eqref{eq:decayvarphi} and \eqref{eq:key} yields
\begin{equation*} \label{eq:varphibigjLinfty}
    \|\varphi_{\mathsf{w},j}\|_{L^{\infty}([-1,1])} 
    \leq \|\varphi_{\mathsf{w},j}\|_{L^{\infty}(\R)} 
    \leq \sqrt{\frac{2\mathsf{w}}{\pi}}\|\varphi_{\mathsf{w},j}\|_{L_{\bf u}^2(\R)}
    \leq 2\sqrt{\frac{\mathsf{w}}{\pi}}.
\end{equation*}
For the remaining case of $j\geq \lfloor 4\mathsf{w}\rfloor$, we recall from Lemma~\ref{lem:higheigncase} that \eqref{eq:psijlarge} holds whenever $j\geq  \frac{2\mathsf{w}}{\pi}$. 
The proof is now completed. 
\end{proof}

\subsection{Preliminary results for Theorem~\ref{thm:leastsquaresampling}} \label{sec:prelimforthm1}

The existence of a unique solution to \eqref{leastsquaresproblem}, for any finite index set $\Lambda\subset\mathbb{N}_0^d$ can be ensured using the \emph{discrete stability constant} associated with $\mathcal{S}_{\Lambda}$ and the sample size $m$; that is
\begin{equation*}
    \alpha 
    = \alpha(\mathcal{S}_{\Lambda}, \{\vec{y}_j\}_{j=1}^m) \\
    := \inf \bigg\{\frac{1}{m}\sum_{j=1}^m |g(y_j)|^2 : g\in\mathcal{S}_{\Lambda} \text{ and } \|g\|_{L_{{\bf u}}^2([-1,1]^d)}=1  \bigg\}.
\end{equation*}
Then, since $\mathcal{S}_{\Lambda}$ is a finite-dimensional space, with a basis formed by the vectors $\{\varphi_{\mathsf{w},\vec{\nu}}\}_{\vec{\nu}\in\Lambda}$, we apply the Courant-Fisher min-max theorem \citep[Theorem 4.2.6]{horn2012matrix} to get 
\begin{equation*} 
    \alpha = \sigma_{\rm min}(A) = \lambda_{\rm min}(A^*A),
\end{equation*}
where $A\in\mathbb{R}^{m\times\texttt{\#}\Lambda}$ is given by 
\begin{equation} \label{eqdef:A}
    A_{j,\vec{\nu}} := \dfrac{1}{\sqrt{m}} \varphi_{\mathsf{w},\vec{\nu}}(\vec{y}_j).
\end{equation}
Here, $\sigma_{\rm min}(A)$ denotes the minimum singular value of $A$ and $\lambda_{\rm min}(A^*A)$ the minimum eigenvalue of $A^*A$.
Note that to define $A^*$, we assume a fixed ordering of $\vec{\nu}\in\Lambda$.
A positive lower bound on $\alpha$, crucial for the existence of a unique solution to \eqref{leastsquaresproblem}, can be further guaranteed via Monte Carlo sampling if $m$ is sufficiently large. 
This is formalized in the following theorem, adapted from \citep[Theorem 5.7]{adcock2022sparse}.

\begin{theorem} \label{thm:LS}
Let $\beta,\delta\in (0,1)$ and set\footnote{The quantity $\kappa(\mathcal{S}_{\Lambda})$ in \eqref{coherence} is known as the $L^{\infty}$-norm of the \emph{Christoffel function} of $\mathcal{S}_{\Lambda}$ \citep[Definition~5.4]{adcock2022sparse}.}
\begin{equation} \label{coherence}
    \kappa(\mathcal{S}_{\Lambda}) := \max_{\vec{y}\in [-1,1]^d} \sum_{\vec{\nu}\in\Lambda} |\varphi_{\mathsf{w},\vec{\nu}}(\vec{y})|^2.
\end{equation}
If $m\geq ((1-\delta)\log(1-\delta)+\delta)^{-1}\kappa(\mathcal{S}_{\Lambda})\log(\texttt{\#}\Lambda/\beta)$, then with probability at least $1-\beta$, the discrete stability constant $\alpha=\alpha(\mathcal{S}_{\Lambda}, \{\vec{y}_j\}_{j=1}^m)$ satisfies
\begin{equation} \label{lowbdalpha}
    \alpha > \sqrt{1-\delta}.
\end{equation}
\end{theorem}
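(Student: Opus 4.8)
The plan is to recognize $A^*A$ as a sum of independent, positive semidefinite rank-one random matrices and to apply a matrix Chernoff bound to control its smallest eigenvalue from below. Concretely, writing $\vec{\phi}(\vec{y}) := (\varphi_{\mathsf{w},\vec{\nu}}(\vec{y}))_{\vec{\nu}\in\Lambda}$ and setting $X_j := \frac{1}{m}\vec{\phi}(\vec{y}_j)\vec{\phi}(\vec{y}_j)^*$, we have $A^*A = \sum_{j=1}^m X_j$ for the matrix $A$ in \eqref{eqdef:A}. Since the infimum defining $\alpha$ equals $\lambda_{\min}(A^*A)$ by Courant--Fischer, the whole statement reduces to a lower-tail estimate for $\lambda_{\min}\big(\sum_j X_j\big)$.

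The first step is to compute the mean. Because $\{\varphi_{\mathsf{w},\vec{\nu}}\}_{\vec{\nu}\in\Lambda}$ is orthonormal in $L^2_{\bf u}([-1,1]^d)$ and the $\vec{y}_j$ are drawn from $\bf u$, the $(\vec{\nu},\vec{\mu})$ entry of $m\,\mathbb{E}[X_j]$ equals $\langle \varphi_{\mathsf{w},\vec{\nu}},\varphi_{\mathsf{w},\vec{\mu}}\rangle_{L^2_{\bf u}} = \delta_{\vec{\nu},\vec{\mu}}$, whence $\mathbb{E}[A^*A] = I$ and $\lambda_{\min}(\mathbb{E}[A^*A]) = 1$. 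The second step is a uniform per-sample spectral bound: each $X_j$ is positive semidefinite with a single nonzero eigenvalue, so
\[
\lambda_{\max}(X_j) = \frac{1}{m}\|\vec{\phi}(\vec{y}_j)\|_2^2 = \frac{1}{m}\sum_{\vec{\nu}\in\Lambda}|\varphi_{\mathsf{w},\vec{\nu}}(\vec{y}_j)|^2 \leq \frac{1}{m}\kappa(\mathcal{S}_\Lambda)
\]
almost surely, directly from the definition \eqref{coherence} of $\kappa(\mathcal{S}_\Lambda)$.

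With the mean equal to the identity and per-sample bound $R = \kappa(\mathcal{S}_\Lambda)/m$, I would invoke the standard matrix Chernoff lower-tail inequality, which for $\delta\in(0,1)$ yields
\[
\mathbb{P}\big[\lambda_{\min}(A^*A)\leq 1-\delta\big] \leq \texttt{\#}\Lambda\,\Big(\frac{e^{-\delta}}{(1-\delta)^{1-\delta}}\Big)^{m/\kappa(\mathcal{S}_\Lambda)}.
\]
The key algebraic observation is that $\log\big(e^{-\delta}(1-\delta)^{-(1-\delta)}\big) = -\big((1-\delta)\log(1-\delta)+\delta\big)$, so that forcing the right-hand side to be at most $\beta$ is \emph{exactly} the hypothesis $m\geq((1-\delta)\log(1-\delta)+\delta)^{-1}\kappa(\mathcal{S}_\Lambda)\log(\texttt{\#}\Lambda/\beta)$; one also checks $(1-\delta)\log(1-\delta)+\delta>0$ on $(0,1)$, so the constant is well defined and positive. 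On the complementary event, $\lambda_{\min}(A^*A)>1-\delta$, equivalently $\alpha=\sigma_{\min}(A)>\sqrt{1-\delta}$, which is \eqref{lowbdalpha}.

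The main obstacle, and really the only nonroutine point, is matching the exponential constant: the tail function in the Chernoff bound must be simplified precisely to $-((1-\delta)\log(1-\delta)+\delta)$ to recover the stated sample complexity with the sharp constant, rather than the looser bound one would get from a weaker sub-exponential estimate. A secondary piece of bookkeeping is the square root: the infimum equals $\lambda_{\min}(A^*A)=\sigma_{\min}(A)^2$, so the threshold obtained for $\lambda_{\min}(A^*A)$ is $1-\delta$, which translates to $\sqrt{1-\delta}$ for $\sigma_{\min}(A)$, consistent with the $1/\sqrt{1-\delta}$ factors appearing downstream in Theorem~\ref{thm:leastsquaresampling}. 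Since the result is adapted from \citep[Theorem~5.7]{adcock2022sparse}, an alternative to reproving matrix Chernoff is simply to verify the orthonormality hypothesis of that theorem and identify $\kappa(\mathcal{S}_\Lambda)$ with the $L^\infty$-norm of the Christoffel function of $\mathcal{S}_\Lambda$, then quote the conclusion directly.
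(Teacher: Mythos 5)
Your proof is correct and follows exactly the argument behind the result the paper relies on: the paper itself offers no proof of Theorem~\ref{thm:LS}, merely adapting it from \citep[Theorem~5.7]{adcock2022sparse}, whose proof is precisely your matrix Chernoff argument (mean equal to the identity by orthonormality, per-sample bound $\kappa(\mathcal{S}_{\Lambda})/m$, and the lower-tail exponent simplifying to $(1-\delta)\log(1-\delta)+\delta$). Your square-root bookkeeping also quietly repairs a notational slip in the paper, which writes $\alpha = \sigma_{\min}(A) = \lambda_{\min}(A^*A)$ even though the infimum defining $\alpha$ equals $\sigma_{\min}(A)^2$; your reading ($\alpha$ as $\sigma_{\min}(A)$, threshold $\sqrt{1-\delta}$) is the intended one and is consistent with how \eqref{lowbdalpha} is used downstream in the proof of Theorem~\ref{thm:leastsquaresampling}.
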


Evidently from \eqref{coherence}, the size of $\kappa(\mathcal{S}_{\Lambda})$ is impacted by the choice of $\Lambda$. 
When $\Lambda$ is a hyperbolic cross in low dimensions, an upper bound for $\kappa(\mathcal{S}_{\Lambda})$ can be provided by the next proposition. 

\begin{proposition} \label{prop:Theta} 
Let $n\in\N$, and let $\Lambda = \Lambda^{\rm HC}_{n-1}$ be of dimension $d = 1,2,3$. 
If $\mathsf{w}\geq 1$, and $\mathcal{S}_{\Lambda}$ denotes the linear span of $\{\varphi_{\mathsf{w},\vec{\nu}}\}_{\vec{\nu}\in\Lambda}$, then it holds that 
\begin{equation} \label{Theta}
    \kappa(\mathcal{S}_{\Lambda})\leq  (2\texttt{\#}\Lambda)^{2\gamma(\mathsf{w})},
\end{equation}
for the following cases: 
\begin{enumerate}
    \item for $d=1,2$ and all $n\in\N$,
    \item for $d=3$ and all $n\geq 26$. 
\end{enumerate}
\end{proposition}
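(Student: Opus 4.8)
The plan is to convert the Christoffel-function bound \eqref{Theta} into a purely combinatorial estimate over the hyperbolic cross and then dispatch all three dimensions through a single cardinality inequality.

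First I would exploit the tensor-product definition \eqref{tensor}. For any fixed $\vec{y}=(y_1,\dots,y_d)\in[-1,1]^d$ one has $|\varphi_{\mathsf{w},\vec{\nu}}(\vec{y})|^2=\prod_{k=1}^d|\varphi_{\mathsf{w},\nu_k}(y_k)|^2\leq\prod_{k=1}^d\|\varphi_{\mathsf{w},\nu_k}\|_{L^{\infty}([-1,1])}^2$, and since the right-hand side does not depend on $\vec{y}$, maximizing over $\vec{y}$ in \eqref{coherence} yields
\begin{equation*}
\kappa(\mathcal{S}_{\Lambda})\leq\sum_{\vec{\nu}\in\Lambda}\prod_{k=1}^d\|\varphi_{\mathsf{w},\nu_k}\|_{L^{\infty}([-1,1])}^2.
\end{equation*}
Into each factor I would feed Proposition~\ref{prop:gammaw}. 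Because $2^{\gamma(\mathsf{w})}\geq24\mathsf{w}\geq 4\mathsf{w}/\pi$ by \eqref{eqdef:gammaw}, the two cases of \eqref{eq:varphijall} merge into the single uniform estimate $\|\varphi_{\mathsf{w},j}\|_{L^{\infty}([-1,1])}^2\leq(2(j+1))^{\gamma(\mathsf{w})}$, valid for every $j\in\N_0$; this is the step that neutralizes the anomalous $j=0$ term, whose size is of order $\mathsf{w}$ rather than a clean power.

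The decisive algebraic move is then to collapse the sum by superadditivity of $t\mapsto t^{\gamma(\mathsf{w})}$ (note $\gamma(\mathsf{w})\geq5\geq1$). Setting $a_{\vec{\nu}}:=\prod_{k=1}^d 2(\nu_k+1)$, the two displays above give $\prod_{k}\|\varphi_{\mathsf{w},\nu_k}\|_{L^{\infty}}^2\leq a_{\vec{\nu}}^{\gamma(\mathsf{w})}$, and since $\sum_i t_i^{p}\leq(\sum_i t_i)^{p}$ for nonnegative $t_i$ and $p\geq1$, I obtain
\begin{equation*}
\kappa(\mathcal{S}_{\Lambda})\leq\sum_{\vec{\nu}\in\Lambda}a_{\vec{\nu}}^{\gamma(\mathsf{w})}\leq\Big(\sum_{\vec{\nu}\in\Lambda}a_{\vec{\nu}}\Big)^{\gamma(\mathsf{w})}=\Big(2^d\sum_{\vec{\nu}\in\Lambda}\prod_{k=1}^d(\nu_k+1)\Big)^{\gamma(\mathsf{w})}.
\end{equation*}
By Definition~\ref{def:HC} every $\vec{\nu}\in\Lambda=\Lambda^{\rm HC}_{n-1}$ satisfies $\prod_{k}(\nu_k+1)\leq n$, so the inner sum is at most $n\,\texttt{\#}\Lambda$, giving $\kappa(\mathcal{S}_{\Lambda})\leq(2^d n\,\texttt{\#}\Lambda)^{\gamma(\mathsf{w})}$.

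Comparing with the target $(2\texttt{\#}\Lambda)^{2\gamma(\mathsf{w})}=(4(\texttt{\#}\Lambda)^2)^{\gamma(\mathsf{w})}$ and taking $\gamma(\mathsf{w})$-th roots, \eqref{Theta} reduces to the single inequality $2^{d-2}n\leq\texttt{\#}\Lambda$. For $d=1,2$ this is immediate, since a single coordinate axis already forces $\texttt{\#}\Lambda\geq n$; for $d=3$ it reads $\texttt{\#}\Lambda\geq2n$, which is comfortably guaranteed under the standing hypothesis $n\geq26$ — one may certify it via the nonasymptotic lower bound of Lemma~\ref{lem:HCsize}, or indeed by counting the three coordinate axes, which already contribute $3n-2$ multi-indices. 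The part demanding the most care is exactly this dimension-three cardinality comparison: the crude per-term estimate only closes once the cross is rich enough relative to $n$, and the alternative route supported by the paper's slicing \eqref{slicing} and monotonicity \eqref{nesting} properties — which reduces the $d$-dimensional sum to nested one-dimensional sums and produces harmonic and logarithmic factors — is precisely where the threshold $n\geq26$ arises naturally. Everything upstream is dimension-robust, the only cost being the factor $2^d$ that is absorbed into the constant of $(2\texttt{\#}\Lambda)^{2\gamma(\mathsf{w})}$.
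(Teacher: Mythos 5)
Your proof is correct, but it follows a genuinely different and more elementary route than the paper's. The paper proceeds by induction on the dimension: it treats $d=1$ via Proposition~\ref{prop:gammaw} together with a binomial/Pascal-identity expansion of $\big(\sum_{k=0}^{n-1}(2k+1)\big)^{\gamma(\mathsf{w})}$ to absorb the anomalous $4\mathsf{w}/\pi$ term, and then climbs to $d=2,3$ using the slicing property \eqref{slicing}, the monotonicity property \eqref{nesting}, and crucially the slice-comparison inequality of Proposition~\ref{prop:relating}, whose $d=3$ case is exactly where the threshold $n\geq 26$ enters. You avoid all of this: you absorb the $j=0$ anomaly once and for all into the uniform bound $\|\varphi_{\mathsf{w},j}\|_{L^{\infty}([-1,1])}^2\leq(2(j+1))^{\gamma(\mathsf{w})}$ (valid since $2^{\gamma(\mathsf{w})}\geq 24\mathsf{w}\geq 4\mathsf{w}/\pi$), then use superadditivity of $t\mapsto t^{\gamma(\mathsf{w})}$ and the defining inequality $\prod_k(\nu_k+1)\leq n$ of the hyperbolic cross to reduce everything to the cardinality comparison $2^{d-2}n\leq\texttt{\#}\Lambda$, which the coordinate axes certify. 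This buys simplicity (no induction, no Proposition~\ref{prop:relating}) and in fact a \emph{stronger} result: your argument establishes \eqref{Theta} for $d=3$ and all $n\geq 2$, showing the paper's threshold $n\geq 26$ is an artifact of its proof strategy rather than intrinsic to the statement. Two minor points: your appeal to Lemma~\ref{lem:HCsize} is misplaced, since that lemma gives an \emph{upper} bound on $\texttt{\#}\Lambda^{\rm HC}_{n-1}$ and cannot certify $\texttt{\#}\Lambda\geq 2n$ --- but your alternative axis count $3n-2\geq 2n$ (for $n\geq 2$) does the job, so nothing is lost; and your closing speculation that the paper's slicing route is ``where the threshold $n\geq 26$ arises naturally'' is accurate, but your own argument renders that threshold unnecessary for this proposition.
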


Since the index set $\Lambda=\Lambda^{\rm HC}_{n-1}$ contains $\vec{0}$, the function $\varphi_{\mathsf{w},\vec{0}}$ will always be included in the span $\mathcal{S}_\Lambda$. 
Proposition~\ref{prop:gammaw} and Remark~\ref{rem:firsteigen} suggest that estimating $\|\varphi_{\mathsf{w},\vec{0}}\|_{L^{\infty}([-1,1]^d)}$ may pose complications, as an upper bound for this value could reach $\mathsf{w}^d$, leading to an exponential growth of $\kappa(\mathcal{S}_{\Lambda})$ in high dimensions. 
Consequently, we restrict the selection of $\Lambda=\Lambda^{\rm HC}_{n-1}$ to dimensions up to $d=3$.

The proof of Proposition~\ref{prop:Theta} utilizes an induction argument on dimension, a strategy employed in the proof of \citep[Proposition~5.13]{adcock2022sparse} (also \citep[Lemma 3.3]{chkifa2015discrete}). 
For the base case $d=1$, we make use of Proposition~\ref{prop:gammaw} to provide an upper bound for $\kappa(\mathcal{S}_{\Lambda})$.
For the induction step, we observe from the slicing property \eqref{slicing} that each $\Lambda_k$ is a $(d-1)$-dimensional hyperbolic cross of order $\big\lfloor \frac{n}{k+1}\big\rfloor$, i.e.
\begin{equation} \label{eq:LambdakHC}
    \Lambda_{k} = \bigg\{\nu'=(\nu_2,\dots,\nu_d) \in\N_0^{d-1}: \prod_{k=2}^{d}(\nu_k+1)\leq \big\lfloor \frac{n}{k+1}\big\rfloor\bigg\},
\end{equation}
and particularly, 
\begin{equation} \label{eq:Lambda0HC}
    \Lambda_0 = \bigg\{\nu'=(\nu_2,\dots,\nu_{d})\in\N^{d-1}_0: \prod_{k=2}^{d}(\nu_k+1)\leq n\bigg\}
\end{equation}
is a $(d-1)$-dimensional hyperbolic cross of exactly order $n$. 
From there, we rely on the auxiliary yet crucial result stated below, which compares the sizes of horizontal slices in two- and three-dimensional hyperbolic cross index sets. 

\begin{proposition} \label{prop:relating}
Let $n\in\N$ and $d=2,3$. Let $\Lambda=\Lambda^{\rm HC}_{n-1}$ be the $d$-dimensional hyperbolic cross of order $n$ where $\Lambda = \bigsqcup_{k=0}^{n-1} \, \{k\}\times\Lambda_k$ as in \eqref{slicing}, \eqref{eq:LambdakHC}.
Then for $d=2$ and all $n\geq 2$, or for $d=3$ and all $n\geq 26$, 
\begin{equation} \label{crucialinequality}
    \sum_{k=1}^{n-1} (2k+1)\,(\texttt{\#}\Lambda_k)^2 \geq \frac{(\texttt{\#}\Lambda_0)^2}{n}. 
\end{equation}
\end{proposition}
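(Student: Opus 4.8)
The plan is to reduce the inequality to an elementary statement about the cardinalities of lower-dimensional hyperbolic crosses and then treat the two dimensions separately. First I would use the slicing description \eqref{eq:LambdakHC}--\eqref{eq:Lambda0HC}: for $d=2,3$ the slice $\Lambda_k$ is the $(d-1)$-dimensional hyperbolic cross of order $\lfloor n/(k+1)\rfloor$, while $\Lambda_0$ is the $(d-1)$-dimensional hyperbolic cross of order $n$. When $d=2$ the relevant cross is one-dimensional, so $\texttt{\#}\Lambda_k=\lfloor n/(k+1)\rfloor$ and $\texttt{\#}\Lambda_0=n$. When $d=3$ the cross is two-dimensional, and writing $D(m):=\sum_{j=1}^m\lfloor m/j\rfloor$ for the divisor-summatory function (equivalently, the number of lattice points $(a,b)\in\N^2$ with $ab\le m$), one has $\texttt{\#}\Lambda_k=D(\lfloor n/(k+1)\rfloor)$ and $\texttt{\#}\Lambda_0=D(n)$. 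In both cases the ``axis'' slice gives the uniform lower bound $\texttt{\#}\Lambda_k\ge \lfloor n/(k+1)\rfloor$, which will be the workhorse estimate.

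For $d=2$ the claim is then immediate: since $\lfloor n/(k+1)\rfloor\ge 1$ for every $1\le k\le n-1$, the left-hand side is at least $\sum_{k=1}^{n-1}(2k+1)=n^2-1$, while the right-hand side equals $n^2/n=n$, and $n^2-1\ge n$ holds for all $n\ge 2$.

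For $d=3$ I would bound the two sides by matching elementary estimates in $n$ and $\log n$. On the right I would use the clean upper bound $D(n)\le n H_n\le n(1+\log n)$, where $H_n=\sum_{j=1}^n 1/j$, so that $(\texttt{\#}\Lambda_0)^2/n\le n(1+\log n)^2$. On the left I would combine the workhorse estimate $\texttt{\#}\Lambda_k\ge\lfloor n/(k+1)\rfloor$ with $\lfloor x\rfloor\ge x/2$ for $x\ge 1$ (valid here since $n/(k+1)\ge 1$), giving $(\texttt{\#}\Lambda_k)^2\ge n^2/(4(k+1)^2)$, and then evaluate $\sum_{k=1}^{n-1}(2k+1)/(k+1)^2=\sum_{k=1}^{n-1}\big(2/(k+1)-1/(k+1)^2\big)$ in closed form in terms of harmonic numbers. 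This yields a lower bound of the shape $\tfrac14 n^2(2H_n-c)$ for an explicit constant $c$, hence of order $\tfrac12 n^2\log n$. The proof then closes once the scalar inequality $\tfrac14 n\,(2\log n-c)\ge (1+\log n)^2$ is verified for $n\ge 26$, which I would do by checking it at $n=26$ and showing that the gap between the two sides is increasing in $n$ thereafter.

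The main obstacle is the $d=3$ case: unlike $d=2$, the logarithmic growth of $D$ forces one to track explicit (non-asymptotic) constants on both sides so that the final scalar comparison is valid for every $n\ge 26$ rather than merely for $n$ large. In particular, the restriction $n\ge 26$ is precisely where the elementary lower bound on the left-hand side overtakes the divisor-function upper bound on the right-hand side (with the explicit constants produced by these estimates), so keeping the floor functions and the constants tight enough to land on this threshold---and confirming monotonicity of the gap beyond it---is the delicate part. By contrast, identifying the cardinalities and disposing of the $d=2$ case are routine.
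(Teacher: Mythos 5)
Your proposal is correct, and its skeleton coincides with the paper's proof: the same identification of the slices $\Lambda_k$ as lower-dimensional hyperbolic crosses, the identical argument for $d=2$ (bounding each $\#\Lambda_k$ below by $1$ and using $\sum_{k=1}^{n-1}(2k+1)=n^2-1\geq n$), and, for $d=3$, the same upper bound $\#\Lambda_0 \leq n(1+\log n)$ via the harmonic sum. The one genuine difference is how you bound the left-hand side when $d=3$. The paper discards everything except the $k=1$ term, using $\Lambda_1 \supseteq \{(0,j): 0 \leq j \leq \lfloor n/2\rfloor -1\}$ to get $\sum_{k\geq 1}(2k+1)(\#\Lambda_k)^2 \geq 3\lfloor n/2\rfloor^2$, and then checks the scalar inequality $3\lfloor n/2 \rfloor^2 \geq n(1+\log n)^2$ directly for $n \geq 26$ --- a threshold that is tight for this single-slice bound (it fails at $n=25$). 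You instead retain the whole sum, use $\#\Lambda_k \geq \lfloor n/(k+1)\rfloor \geq n/(2(k+1))$, and evaluate $\sum_{k=1}^{n-1}(2k+1)/(k+1)^2 = 2(H_n-1) - \sum_{j=2}^{n} j^{-2}$, obtaining a lower bound of order $\tfrac{1}{2}n^2\log n$ rather than the paper's $\tfrac{3}{4}n^2$. This buys a strictly stronger intermediate estimate: your final scalar inequality $\tfrac{n}{4}(2\log n - c) \geq (1+\log n)^2$, with $c = 1+\pi^2/6$, already holds around $n \approx 20$, so the stated threshold $n\geq 26$ is comfortable rather than sharp for your route; in particular, your closing remark that $n \geq 26$ is ``precisely where'' your bounds cross is the only inaccuracy, and a harmless one, since all you need is validity on $n\geq 26$. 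What your route costs is the harmonic-number bookkeeping and the monotonicity check of the gap, both of which the paper's single-term shortcut avoids entirely. Both arguments are valid; the paper's is leaner, yours is sharper.
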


\begin{proof} 
We first prove the proposition for $d=2$.
In this case, $\Lambda=\Lambda^{\rm HC}_{n-1}$ is a two-dimensional hyperbolic cross where each slice $\Lambda_k$, for $k=0,\dots,n-1$, is a one-dimensional hyperbolic cross.
From \eqref{eq:Lambda0HC}, we have $\texttt{\#}\Lambda_0 = n$.
Thus, proving the proposition amounts to proving
\begin{equation*} 
    \sum_{k=1}^{n-1} (2k+1)\,(\texttt{\#}\Lambda_k)^2 \geq n. 
\end{equation*}
However, this is immediate, since
\begin{equation*}
    \sum_{k=1}^{n-1} (2k+1)\,(\texttt{\#}\Lambda_k)^2 \geq \sum_{k=1}^{n-1} (2k+1) = n^2-1\geq n,
\end{equation*}
when $n\geq 2$, and we are done with $d=2$. 
Turning to $d=3$, we observe that
\begin{equation} \label{L2embed}
    \sum_{k=1}^{n-1} (2k+1) (\texttt{\#}\Lambda_k)^2 \geq 3(\texttt{\#}\Lambda_1)^2 \geq 3\Big\lfloor \frac{n}{2} \Big\rfloor^2.
\end{equation}
Note that in this case, each $\Lambda_k$ is a two-dimensional hyperbolic cross.
Here, the last inequality in \eqref{L2embed} holds since $\Lambda_1$ contains the set
\begin{equation*}
    \Big\{(0,j): 0\leq j\leq \Big\lfloor \frac{n}{2} \Big\rfloor -1\Big\}.
\end{equation*}
At the same time, we can write
\begin{equation*}
    \Lambda_0 = \bigcup_{k=0}^{n-1} \Big\{(k,j), 0 \leq j \leq \frac{n}{k+1} -1 \Big\},
\end{equation*}
which gives
\begin{equation} \label{Lambda0sizefact}
    \texttt{\#}\Lambda_0 \leq \sum_{k=0}^{n-1} \frac{n}{k+1} \leq n\Big(1+\int_1^n \frac{1}{x} \, {\rm d}x  \Big) = n(1+\log (n)). 
\end{equation}
Therefore, if
\begin{equation} \label{eq:claimthreshold}
    3\Big\lfloor \frac{n}{2} \Big\rfloor^2 \geq n(1+\log (n))^2,
\end{equation}
then the desired conclusion \eqref{crucialinequality} follows from \eqref{L2embed} and \eqref{Lambda0sizefact}.
A direct check shows that \eqref{eq:claimthreshold} holds for all $n\geq 26$, which completes the proof.
\end{proof}

We now proceed with the proof of Proposition~\ref{prop:Theta}.

\begin{proof}[Proof of Proposition~\ref{prop:Theta}]
To begin, we note that $\gamma(\mathsf{w})>1$ for $\mathsf{w}\geq 1$. Moreover
\begin{equation} \label{eq:powerscompared}
    2^{\gamma(\mathsf{w})-1}\geq \frac{4\mathsf{w}}{\pi}.
\end{equation}
When $d=1$, $\Lambda = \Lambda^{\rm HC}_{n-1} = \{0,\dots,n-1\}$. 
On the one hand, if $n=1$, then it follows from Proposition~\ref{prop:gammaw} and \eqref{eq:powerscompared} that
\begin{equation} \label{eq:s1}
    \kappa(\mathcal{S}_{\Lambda}) = \|\varphi_{\mathsf{w},0}\|^2_{L^{\infty}([-1,1])} \leq \frac{4\mathsf{w}}{\pi} \leq 2^{\gamma(\mathsf{w})} \leq (2\texttt{\#}\Lambda)^{2\gamma(\mathsf{w})},
\end{equation}
which satisfies \eqref{Theta}.
On the other hand, if $n\geq 2$, then we can again apply Proposition~\ref{prop:gammaw} to obtain
\begin{equation} \label{eq:d1s2}
    \kappa(\mathcal{S}_{\Lambda}) \leq \sum_{k=0}^{n-1} \|\varphi_{\mathsf{w},k}\|^2_{L^{\infty}([-1,1])} \leq \frac{4\mathsf{w}}{\pi} + \sum_{k=1}^{n-1} (k+1)^{\gamma(\mathsf{w})} \leq \frac{4\mathsf{w}}{\pi} + \sum_{k=1}^{n-1} (2k+1)^{\gamma(\mathsf{w})}. 
\end{equation}
We claim that 
\begin{equation} \label{eq:claim}
   \frac{4\mathsf{w}}{\pi} + \sum_{k=1}^{n-1} (2k+1)^{\gamma(\mathsf{w})} \leq \Big(\sum_{k=0}^{n-1} (2k+1) \Big)^{\gamma(\mathsf{w})} =n^{2\gamma(\mathsf{w})} \leq (2\texttt{\#}\Lambda)^{2 \gamma(\mathsf{w})}.
\end{equation}
It is enough to validate the first inequality in \eqref{eq:claim}. 
Then on account of Pascal's identity \citep[Chapter~6.4, Theorem~2]{rosen1999discrete} and \eqref{eq:powerscompared}, we have
\begin{align*}
    \Big(\sum_{k=0}^{n-1} 2k+1 \Big)^{\gamma(\mathsf{w})} &= 
    \Big( \Big(\sum_{k=1}^{n-1} 2k+1 \Big) + 1\Big)^{\gamma(\mathsf{w})} \\
    &= \sum_{l=0}^{\gamma(\mathsf{w})-1} {\gamma(\mathsf{w}) \choose l} \Big(\sum_{k=1}^{n-1} 2k+1\Big)^{l} + \Big(\sum_{k=1}^{n-1} 2k+1 \Big)^{\gamma(\mathsf{w})} \\
    &\geq \sum_{l=0}^{\gamma(\mathsf{w})-1} {\gamma(\mathsf{w}) -1 \choose l} 3^l +  \sum_{k=1}^{n-1} (2k+1)^{\gamma(\mathsf{w})} \\
    &= 4^{\gamma(\mathsf{w})-1} +  \sum_{k=1}^{n-1} (2k+1)^{\gamma(\mathsf{w})} \\
    &\geq \frac{4\mathsf{w}}{\pi} + \sum_{k=1}^{n-1} (2k+1)^{\gamma(\mathsf{w})}.
\end{align*}
Therefore, \eqref{eq:claim} holds. By combining \eqref{eq:claim} and \eqref{eq:d1s2}, we deduce \eqref{Theta} for $d=1$ and $n\geq 2$. 

When $d=2$ and $n=1$, we have $\Lambda = \{(0,0)\}$.
Then 
a similar calculation as in \eqref{eq:s1} yields
\begin{equation*}
    \kappa(\mathcal{S}_{\Lambda}) = \|\varphi_{\mathsf{w},(0,0)}\|^2_{L^{\infty}([-1,1]^2)} = \|\varphi_{\mathsf{w},0}\|^4_{L^{\infty}([-1,1])} \leq \frac{16\mathsf{w}^2}{\pi^2} \leq 2^{2\gamma(\mathsf{w})} = (2\texttt{\#}\Lambda)^{2\gamma(\mathsf{w})},
\end{equation*}
which establishes \eqref{Theta}.
When $d=2$ and $n\geq 2$, we apply \eqref{Theta} to each one-dimensional hyperbolic cross $\Lambda_k$ \eqref{eq:LambdakHC}, \eqref{eq:Lambda0HC} and invoke Proposition~\ref{prop:gammaw}, obtaining
\begin{align} \label{eq:induction1}
    \nonumber \kappa(\mathcal{S}_{\Lambda}) &\leq \sum_{k=0}^{n-1} \|\varphi_{\mathsf{w},k}\|^2_{L^{\infty}([-1,1])} \sum_{j\in\Lambda_k} \|\varphi_{\mathsf{w},j}\|^2_{L^{\infty}([-1,1])} \\
    \nonumber &= \|\varphi_{\mathsf{w},0}\|^2_{L^{\infty}([-1,1])} \sum_{j\in\Lambda_0} \|\varphi_{\mathsf{w},j}\|^2_{L^{\infty}([-1,1])} + \sum_{k=1}^{n-1} \|\varphi_{\mathsf{w},k}\|^2_{L^{\infty}([-1,1])} \sum_{j\in\Lambda_k} \|\varphi_{\mathsf{w},j}\|^2_{L^{\infty}([-1,1])} \\
    &\leq \frac{4\mathsf{w}}{\pi}(2\texttt{\#}\Lambda_0)^{2\gamma(\mathsf{w})} + \sum_{k=1}^{n-1} (k+1)^{\gamma(\mathsf{w})}(2\texttt{\#}\Lambda_k)^{2\gamma(\mathsf{w})}. 
\end{align}
As with the case $d=1$, we aim to bound the right-hand side of \eqref{eq:induction1} 
by $(\sum_{k=0}^{n-1} (2k+1)\,(2\texttt{\#}\Lambda_k)^2)^{\gamma(\mathsf{w})}$. For brevity, we denote $A_k := 2\texttt{\#}\Lambda_k$. Then
\begin{align}
    \nonumber &\Big(\sum_{k=0}^{n-1} (2k+1)\,A_k^2\Big)^{\gamma(\mathsf{w})} \\
    \nonumber &= \Big(\sum_{k=1}^{n-1} (2k+1)\,A_k^2\Big)^{\gamma(\mathsf{w})} + \sum_{l=1}^{\gamma(\mathsf{w})} {\gamma(\mathsf{w}) \choose l} \Big(\sum_{k=1}^{n-1} (2k+1)\,A_k^2\Big)^{\gamma(\mathsf{w})-l} A_0^{2l} \\
    \label{eq:induction2} &\geq \sum_{k=1}^{n-1} (2k+1)^{\gamma(\mathsf{w})} A_k^{2\gamma(\mathsf{w})} + \sum_{l=1}^{\gamma(\mathsf{w})} {\gamma(\mathsf{w}) \choose l} \Big(\sum_{k=1}^{n-1} (2k+1)\,A_k^2\Big)^{\gamma(\mathsf{w})-l} A_0^{2l}.
\end{align}
An application of Proposition~\ref{prop:relating} gives
\begin{equation*}
    \Big(\sum_{k=1}^{n-1} (2k+1)\,A_k^2\Big)^{\gamma(\mathsf{w})-l} \geq \frac{A_0^{2(\gamma(\mathsf{w})-l)}}{n^{\gamma(\mathsf{w})-l}}.
\end{equation*}
Therefore
\begin{align}
    \nonumber \sum_{l=1}^{\gamma(\mathsf{w})} {\gamma(\mathsf{w}) \choose l} \Big(\sum_{k=1}^{n-1} (2k+1)\,A_k^2\Big)^{\gamma(\mathsf{w})-l} A_0^{2l} &\geq \frac{A_0^{2\gamma(\mathsf{w})}}{n^{\gamma(\mathsf{w})}} \sum_{l=1}^{\gamma(\mathsf{w})} {\gamma(\mathsf{w}) \choose l} n^{l} \\
    \nonumber &= A_0^{2\gamma(\mathsf{w})}\big(1+ n + n^2 + \dots + n^{\gamma(\mathsf{w})-1}\big)\\
    \nonumber &\geq A_0^{2\gamma(\mathsf{w})}2^{\gamma(\mathsf{w})-1} \\
    \label{eq:induction3} &\geq A_0^{2\gamma(\mathsf{w})} \frac{4\mathsf{w}}{\pi},
\end{align}
where the last inequality is due to \eqref{eq:powerscompared}.
Putting back $A_k = 2\texttt{\#}\Lambda_k$, and combining \eqref{eq:induction2}, \eqref{eq:induction3} together with \eqref{eq:induction1} 
yields
\begin{align} \label{eq:induction4}
    \nonumber \kappa(\mathcal{S}_{\Lambda}) &\leq \frac{4\mathsf{w}}{\pi}(2\texttt{\#}\Lambda_0)^{2\gamma(\mathsf{w})} + \sum_{k=1}^{n-1} (k+1)^{\gamma(\mathsf{w})}(2\texttt{\#}\Lambda_k)^{2\gamma(\mathsf{w})} \\
    &\leq 4^{\gamma(\mathsf{w})} \Big(\sum_{k=0}^{n-1} (2k+1)\,(\texttt{\#}\Lambda_k)^2\Big)^{\gamma(\mathsf{w})}.
\end{align}
Next, by utilizing the monotonicity property \eqref{nesting}
\begin{equation*}    
    k(\texttt{\#}\Lambda_k)^2 = \sum_{j=1}^{k} (\texttt{\#}\Lambda_k)^2 \leq \sum_{j=0}^{k-1} (\texttt{\#}\Lambda_k) (\texttt{\#}\Lambda_j),
\end{equation*}
we can conclude from \eqref{eq:induction4} that
\begin{align*}
    \kappa(\mathcal{S}_{\Lambda}) 
    &\leq 4^{\gamma(\mathsf{w})} \Big(\sum_{k=0}^{n-1} (2k+1)\,(\texttt{\#}\Lambda_k)^2\Big)^{\gamma(\mathsf{w})} \\
    &\leq 4^{\gamma(\mathsf{w})} \Big(\sum_{k=0}^{n-1}(\texttt{\#}\Lambda_k)^2 + 2\sum_{j<k} (\texttt{\#}\Lambda_k)(\texttt{\#}\Lambda_j) \Big)^{\gamma(\mathsf{w})} \\
    &= 4^{\gamma(\mathsf{w})} \Big(\sum_{k=0}^{n-1} \texttt{\#}\Lambda_k \Big)^{2\gamma(\mathsf{w})}
    = (2\texttt{\#}\Lambda)^{2\gamma(\mathsf{w})},
\end{align*}
which is \eqref{Theta} when $d=2$ and $n\geq 2$. 

Finally, when $d=3$ and $n \geq 26$, we adopt arguments similar to those used for $d=2$ and $n \geq 2$. Namely, we first apply \eqref{Theta} to each two-dimensional hyperbolic cross $\Lambda_k$ \eqref{eq:LambdakHC}, \eqref{eq:Lambda0HC}, and invoke Proposition~\ref{prop:gammaw}, to obtain
\begin{align} \label{eq:induction1d3}
    \nonumber \kappa(\mathcal{S}_{\Lambda}) &= \sum_{k=0}^{n-1} \|\varphi_{\mathsf{w},k}\|^2_{L^{\infty}([-1,1])} \sum_{\vec{\nu}\in\Lambda_k} \|\varphi_{\mathsf{w},\vec{\nu}}\|^2_{L^{\infty}([-1,1]^2)} \\
    &\leq \frac{4\mathsf{w}}{\pi}(2\texttt{\#}\Lambda_0)^{2\gamma(\mathsf{w})} + \sum_{k=1}^{n-1} (k+1)^{\gamma(\mathsf{w})}(2\texttt{\#}\Lambda_k)^{2\gamma(\mathsf{w})},
\end{align}
which matches the form of \eqref{eq:induction1}. From this point on, the argument proceeds exactly as in the case $d=2$ and $n\geq 2$, with the sole modification being the application of Proposition~\ref{prop:relating} for $d=3$ and $n \geq 26$. We therefore omit further details. 
\end{proof}

\subsection{Proof of Theorem~\ref{thm:leastsquaresampling}} \label{sec:leastsquaresproof}

Building on Proposition~\ref{prop:Theta}, if we take
\begin{equation*}
    m\geq ((1-\delta)\log(1-\delta)+\delta)^{-1}(2\texttt{\#}\Lambda)^{2\gamma(\mathsf{w})}\log(\texttt{\#}\Lambda/\beta),
\end{equation*}
then we guarantee \eqref{lowbdalpha} from Theorem~\ref{thm:LS}.
From there, the theory outlined in \citep[Section~5]{adcock2022sparse}, particularly \citep[Corollary~5.9]{adcock2022sparse}, demonstrates that every $f \in \mathcal{C}([-1,1]^d)$ 
has a unique least squares approximation $f^{\natural}$ in \eqref{leastsquaresproblem} that satisfies 
\begin{align} \label{punchline}
    \nonumber 
    \|f - f^{\natural}\|_{L_{ {\bf u}}^2([-1,1]^d)} 
    &\leq (1+ \alpha^{-1}) \inf_{g \in \mathcal{S}_{\Lambda}} 
    \|f-g \|_{L^{\infty}([-1,1]^d)} + \dfrac{1}{\alpha} \|\vec{e}\|_2 \\
    &< \Big(1+ \frac{1}{\sqrt{1-\delta}} \Big) \inf_{g \in \mathcal{S}_{\Lambda}} 
    \|f-g \|_{L^{\infty}([-1,1]^d)} + \frac{1}{\sqrt{1-\delta}} \|\vec{e}\|_2,
\end{align}
thereby completing the proof of Theorem~\ref{thm:leastsquaresampling}. \qed



\subsection{Preliminary results for Theorem~\ref{thm:ApproxSpan}} \label{sec:prelimformainthm2}

The proof of Theorem~\ref{thm:ApproxSpan}, is based on a three-step approximation approach using NNs: 
\begin{itemize}
    \item {\bf Step 1:} approximate Legendre polynomials using NNs;
    \item {\bf Step 2:} construct Slepian basis functions through linear combinations of Legendre polynomials and subsequently of the NN approximations derived in {\bf Step 1};
    \item {\bf Step 3:} approximate frequency-localized functions by linearly combining the appropriate Slepian basis functions, subsequently expressed in terms of the NN approximations obtained in {\bf Step 2}.
\end{itemize}
The culmination of {\bf Step 1} and {\bf Step 2} is Proposition~\ref{prop:ApproxSlepian}, the main preliminary result of this subsection, which will be presented shortly. 
The outcome of {\bf Step 3} will be our second main theorem, Theorem~\ref{thm:ApproxSpan}, with its proof provided in Section~\ref{sec:ProofTheo3}.
The initial steps rely on the fact that Legendre polynomials can effectively approximate Slepian basis functions.
Thus, we begin with an overview of Legendre polynomials.

For each $k \in \N_0$, the Legendre polynomial $\widetilde{P}_k$ in one dimension is defined as the solution to the Sturm-Liouville eigenvalue problem
\begin{eqnarray*}
    (1-x^2)\dfrac{d^2}{dx^2}\widetilde{P}_k - 2x \dfrac{d}{dx}\widetilde{P}_k+k(k+1)\widetilde{P}_k = 0, \qquad \forall x\in [-1,1], 
\end{eqnarray*}
such that $\max_{x\in [-1,1]} |\widetilde{P}_k(x)|=1$.
The family $\{\widetilde{P}_k\}_{k\in\mathbb{N}_0}$ forms an orthogonal basis in $L^2_{\bf u}([-1,1])$. Moreover, as shown in, e.g., \citep[Section~2.2.2]{adcock2022sparse}, 
\begin{equation*} 
    \|\widetilde{P}_k \|_{L^{2}_{ {\bf u}}([-1,1])} = 
    \dfrac{1}{\sqrt{2k+1}}.
\end{equation*} 
From this, we obtain a normalized\footnote{Recall here that ``normalized'' means having the $L^2_{\bf u}([-1,1])$-norm of $1$.} Legendre polynomial $P_k$ of degree $k$ by setting 
\begin{equation} \label{eq:NormLegendr}
    P_{k} := \sqrt{2k+1}  \widetilde{P}_k.
\end{equation} 
Similar to the approach used to build the $d$-dimensional Slepian functions in Section~\ref{sec:background}, for $d \geq 2$ and for each index $\vec{\nu}=(\nu_1,\dots,\nu_d)\in\N^d_0$ we define 
\begin{equation} \label{eq:highLeg}
    P_{\vec{\nu}} := P_{\nu_1} \otimes \cdots \otimes P_{\nu_d},
\end{equation}
where each $P_{\nu_j}$ is a normalized Legendre polynomial in one dimension. 
Evidently, $\{P_{\vec{\nu}}\}_{\vec{\nu}\in\N_0^d}$ forms an orthonormal basis of $L^2_{{\bf u}}([-1,1]^d)$. 

It is known that the normalized Legendre polynomials $\{P_k\}_{k \in \mathbb{N}_0}$ in one dimension can be effectively approximated by ReLU neural networks. 
A concrete example is provided in \citep[Proposition 2.11]{opschoor2022exponential} (see also Proposition~\ref{prop:AppLeg1D}). 
In the case of $d$ dimensions, we frequently rely on the tensorization definition \eqref{eq:highLeg} and the fundamental network calculus operations outlined in Appendix~\ref{appx:basicnetworks}, such as concatenation, parallelization, and product approximation, to develop approximating NNs for normalized Legendre polynomials. 
Although these constructions are well-established in the literature, we include them for completeness and convenient reference as the reader progresses through the material.

We are now ready to present Proposition~\ref{prop:ApproxSlepian}. 


\begin{proposition} \label{prop:ApproxSlepian} 
Let $\varepsilon\in (0,1)$, and $\mathsf{w}\geq 1$.
Let $\Lambda=\Lambda_{n-1}^{\rm HC}$ be the $d$-dimensional hyperbolic cross of order $n\geq 2$, with $d=1,2,3$. 
Let $B(d,n)$, $M(d,n)$ be given \eqref{eq:B}, \eqref{eq:M}, respectively.
Then there exist a universal constant $C>0$ and $N_{\star}=N_{\star}(\Lambda,\mathsf{w},\varepsilon)\in\N$ for which the following holds.
For every Slepian basis function $\varphi_{\mathsf{w}, \vec{\nu}}$ where $\vec{\nu}\in\Lambda_{n-1}^{\rm HC}$, there exists an NN $\Psi_{\varepsilon,N_{\star}}^{\vec{\nu}}$ such that 
\begin{equation*}
     \| \varphi_{\mathsf{w},\vec{\nu}} - {\rm R}(\Psi_{\varepsilon,N_{\star}}^{\vec{\nu}}) \|_{L^{\infty}([-1, 1]^d)}
    \leq B(d,n)\varepsilon,
\end{equation*}
with 
\begin{align*}   
    L(\Psi_{\varepsilon,N_{\star}}^{\vec{\nu}}) 
    &
    \leq C \big((1+\log_2N_{\star})(N_{\star}+\log_2(N_{\star}/\varepsilon)) + (1+ \log_2 (M(d,n)/\varepsilon)\big),\\
    W(\Psi_{\varepsilon,N_{\star}}^{\vec{\nu}}) 
    & 
    \leq C\big((N_{\star}^2 + N_{\star})(N_{\star} + \log_2(N_{\star}/\varepsilon)) + (1+ \log_2 (M(d,n)/\varepsilon)\big).
\end{align*}
\end{proposition}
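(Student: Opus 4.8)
The plan is to carry out the three-step program sketched before the statement: expand each Slepian function in the Legendre basis, truncate at a carefully chosen degree $N_\star$, replace every retained Legendre polynomial by a ReLU network via Proposition~\ref{prop:AppLeg1D}, and then lift to $d=2,3$ by tensorization using the product-approximation calculus of Appendix~\ref{appx:basicnetworks}. I would first settle the one-dimensional case completely and then build the higher-dimensional networks on top of it.

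For $d=1$, write the normalized Slepian function as $\varphi_{\mathsf{w},\nu} = \sum_{k\geq 0} c_{\nu,k} P_k$ in the orthonormal Legendre basis $\{P_k\}$ of $L^2_{\bf u}([-1,1])$. The decisive input is the super-exponential decay of the Legendre coefficients $c_{\nu,k}$ once $k$ passes the turning point $\approx e\mathsf{w}$; this is classical (see \citep{osipov2013prolate}) and, together with the eigenvalue control in \eqref{eq:mu_bounds}, dictates the choice of $N_\star$ in \eqref{theN}. Concretely, the term $2\lfloor e\mathsf{w}\rfloor + 1$ forces $N_\star$ past the point where the coefficients contract with ratio at most $2/3$, while the logarithmic term $\log(3/(\varepsilon\mu_{\mathsf{w},n-1}))/\log(3/2)$ supplies enough extra terms to drive the tail below the target tolerance; since $\vec\nu$ ranges over $\Lambda_{n-1}^{\rm HC}$, the worst case is governed by the smallest relevant eigenvalue $\mu_{\mathsf{w},n-1}$, which is precisely why it appears. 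Truncating at degree $N_\star$ therefore gives a controlled $L^\infty$ truncation error. I would then apply Proposition~\ref{prop:AppLeg1D} to build, for each $k\leq N_\star$, a ReLU network approximating $P_k$ on $[-1,1]$, assemble these in parallel, and read off $\varphi_{\mathsf{w},\nu}$ through a final linear layer carrying the coefficients $c_{\nu,k}$. Using Parseval ($\sum_k|c_{\nu,k}|^2=1$) and Cauchy–Schwarz to bound the coefficient $\ell^1$-mass, the per-polynomial tolerance is rescaled by a factor polynomial in $N_\star$ (hence the $\log_2(N_\star/\varepsilon)$ terms), so that the network-approximation error of the linear combination stays within the same budget; the depth and width for $d=1$ then follow from Proposition~\ref{prop:AppLeg1D} and the parallelization rules of Appendix~\ref{appx:basicnetworks}.

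For $d=2,3$ I would use $\varphi_{\mathsf{w},\vec\nu} = \varphi_{\mathsf{w},\nu_1}\otimes\cdots\otimes\varphi_{\mathsf{w},\nu_d}$ from \eqref{tensor} and multiply the one-dimensional networks with the product-approximation network of Appendix~\ref{appx:basicnetworks}. Each multiplication introduces an error governed by the $L^\infty$ magnitudes of the factors, which I bound via Proposition~\ref{prop:gammaw} (so $\|\varphi_{\mathsf{w},\nu_j}\|^2_{L^\infty([-1,1])}\leq (\nu_j+1)^{\gamma(\mathsf{w})}\leq n^{\gamma(\mathsf{w})}$). Accumulating the product errors across the $d$ factors, while bounding the magnitudes of the partial products by the same norm estimates, is exactly the mechanism that manufactures the explicit polynomial factors $B(d,n)$ and $M(d,n)$ of \eqref{eq:B}, \eqref{eq:M}, yielding the claimed bound $\|\varphi_{\mathsf{w},\vec\nu}-{\rm R}(\Psi^{\vec\nu}_{\varepsilon,N_\star})\|_{L^\infty([-1,1]^d)}\leq B(d,n)\varepsilon$. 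The final architecture is obtained by adding the depth and width of the product networks—each of size logarithmic in the magnitude bound $M(d,n)$ over the tolerance, accounting for the $\log_2(M(d,n)/\varepsilon)$ terms—to the one-dimensional costs, all absorbed into a single universal constant $C$.

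The main obstacle is the quantitative bookkeeping of error propagation and architecture size through the tensorization. One must choose the per-polynomial tolerance in Step~1 and the per-multiplication tolerance in the product networks so that, after summing against the coefficient $\ell^1$-bounds and multiplying against the $L^\infty$-magnitude bounds from Proposition~\ref{prop:gammaw}, the accumulated error collapses \emph{exactly} to $B(d,n)\varepsilon$ while the depth and width remain of the stated order. A second essential point is to make the Legendre coefficient-decay estimate—and hence the precise form of $N_\star$ in \eqref{theN}—rigorous and uniform over all $\vec\nu\in\Lambda_{n-1}^{\rm HC}$, since a single truncation degree must serve every Slepian function indexed by the hyperbolic cross simultaneously.
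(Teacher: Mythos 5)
Your proposal follows essentially the same route as the paper's proof: truncation of the Legendre expansion at the $N_\star$ of \eqref{theN} using the coefficient decay $|\beta_j^k|\leq \mu_{\mathsf{w},k}^{-1}2^{-j}$ from Osipov et al.\ (the paper's Lemma~\ref{lem:ApproxOfSlep1D}), emulation of the retained Legendre polynomials by ReLU networks combined through a final linear layer (Lemma~\ref{lem:AppSumOfLeg} via Propositions~\ref{prop:AppLeg1D} and~\ref{prop:NNLinComb}), and tensorization with the product network $\tilde\times_{\varepsilon,B}$ for $d=2,3$, with the $L^\infty$-bounds of Proposition~\ref{prop:gammaw} generating $B(d,n)$ and $M(d,n)$ exactly as in Section~\ref{sec:prelimformainthm2}. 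The only (harmless, in fact slightly sharper) deviation is that you bound the coefficient mass by Parseval and Cauchy--Schwarz, whereas the paper bounds each $|\beta_j^k|$ by $\|\varphi_{\mathsf{w},k}\|_{L^\infty([-1,1])}\leq n^{\gamma(\mathsf{w})}$; both choices are absorbed into the same architecture bounds.
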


To continue with the proof, we need the following two supporting results. The first, Lemma~\ref{lem:ApproxOfSlep1D}, establishes that every $\varphi_{\mathsf{w}, k}$ can be well-approximated by a linear combination of normalized Legendre polynomials, with a controllable number of terms.
The second, Lemma~\ref{lem:AppSumOfLeg}, establishes that any such linear combination can be efficiently modeled by an NN of manageable length and complexity.

\begin{lemma}\label{lem:ApproxOfSlep1D}
Let $\mathsf{w}\geq 1$. 
Let $\Lambda\subset\mathbb{N}$ be a finite set. 
For $k\in\Lambda$, let $\varphi_{\mathsf{w},k}$ be the $k$th Slepian basis function associated with the eigenvalue $\mu_{\mathsf{w},k}$ \eqref{eq:muj}. 
For every $\varepsilon\in (0,1)$, let
\begin{equation} \label{eqdef:Nstar}
    N_{\star} := \Big \lceil  \max \Big\{2\lfloor e  \mathsf{w} \rfloor +1, \frac{\log(3/(c_{\star}\varepsilon))}{\log (3/2)} \Big\} \Big\rceil,
\end{equation}
where $c_{\star} := \min_{k\in\Lambda} \mu_{\mathsf{w},k}$.
For every normalized Legendre polynomial $P_j$ of degree $j\in\mathbb{N}$, let
\begin{equation} \label{eqdef:betacoef}
    \beta_j^k := \frac{1}{2} \int_{-1}^1 \varphi_{\mathsf{w},k}(x) \overline{P_j}(x) \, {\rm d}x.
\end{equation}
Then for every $N\geq N_{\star}$ and for every $k \in\Lambda$, it holds that
\begin{equation}\label{eq:ApprSlepianUnif}
    \Big\| \varphi_{\mathsf{w}, k} - \sum_{j=0}^{N} \beta_j^k P_j \Big\|_{L^{\infty}([-1,1])} \leq \varepsilon.
\end{equation}
\end{lemma}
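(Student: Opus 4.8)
The plan is to recognize the numbers $\beta_j^k$ as the Legendre coefficients of $\varphi_{\mathsf{w},k}$ and to reduce \eqref{eq:ApprSlepianUnif} to a tail estimate on the Legendre series. Since the uniform measure ${\bf u}$ on $[-1,1]$ has density $\tfrac12$, the definition \eqref{eqdef:betacoef} reads $\beta_j^k = \langle \varphi_{\mathsf{w},k}, P_j\rangle_{L^2_{\bf u}([-1,1])}$, so because $\{P_j\}_{j\in\N_0}$ is an orthonormal basis of $L^2_{\bf u}([-1,1])$ the partial sum $\sum_{j=0}^N \beta_j^k P_j$ is exactly the truncated Legendre expansion of $\varphi_{\mathsf{w},k}$. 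As $\varphi_{\mathsf{w},k}$ is entire, hence analytic on a neighborhood of $[-1,1]$, its Legendre series converges uniformly, so that
\begin{equation*}
    \Big\| \varphi_{\mathsf{w},k} - \sum_{j=0}^N \beta_j^k P_j \Big\|_{L^\infty([-1,1])} = \Big\| \sum_{j=N+1}^\infty \beta_j^k P_j \Big\|_{L^\infty([-1,1])} \leq \sum_{j=N+1}^\infty |\beta_j^k|\,\|P_j\|_{L^\infty([-1,1])}.
\end{equation*}
Using \eqref{eq:NormLegendr} and $\max_{[-1,1]}|\widetilde P_j| = 1$ we have $\|P_j\|_{L^\infty([-1,1])} = \sqrt{2j+1}$, so everything reduces to controlling $\sum_{j>N}\sqrt{2j+1}\,|\beta_j^k|$.

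The crux is the rapid decay of $|\beta_j^k|$ once $j$ passes the threshold $2\lfloor e\mathsf{w}\rfloor+1$, and this is where the PSWF structure enters. I would exploit the finite-Fourier-transform characterization of the Slepian functions: $\varphi_{\mathsf{w},k}$ is, up to normalization, an eigenfunction of $f\mapsto\int_{-1}^1 e^{i\mathsf{w}x t}f(t)\,{\rm d}t$ with eigenvalue $\lambda_k$ satisfying $|\lambda_k|^2 = \tfrac{2\pi}{\mathsf{w}}\mu_{\mathsf{w},k}$. Substituting this eigenrelation into \eqref{eqdef:betacoef} and using the classical identity $\int_{-1}^1 e^{i\mathsf{w}tx}\widetilde P_j(x)\,{\rm d}x = 2\,i^j j_j(\mathsf{w}t)$, with $j_j$ the spherical Bessel function of order $j$, gives $|\beta_j^k| = \tfrac{\sqrt{2j+1}}{|\lambda_k|}\big|\int_{-1}^1 \varphi_{\mathsf{w},k}(t)\,j_j(\mathsf{w}t)\,{\rm d}t\big|$. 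Cauchy--Schwarz together with the normalization $\|\varphi_{\mathsf{w},k}\|_{L^2([-1,1])} = \sqrt{2}$ and the elementary bound $|j_j(s)|\leq s^j/(2j+1)!!$ then yields
\begin{equation*}
    \sqrt{2j+1}\,|\beta_j^k| \leq \sqrt{\tfrac{2}{\pi}}\,\sqrt{\tfrac{\mathsf{w}}{\mu_{\mathsf{w},k}}}\,(2j+1)\,\frac{\mathsf{w}^{j}}{(2j+1)!!}.
\end{equation*}
Since the ratio of consecutive terms of $\mathsf{w}^j/(2j+1)!!$ is $\mathsf{w}/(2j+3)$, Stirling's estimate shows $\mathsf{w}^j/(2j+1)!! \lesssim (e\mathsf{w}/2j)^j$, and for $j\geq 2\lfloor e\mathsf{w}\rfloor+1$ one has $e\mathsf{w}/2j\leq \tfrac14$; this super-geometric decay absorbs the polynomial and $\sqrt{\mathsf{w}}$ prefactors, so that past the threshold the whole right-hand side decays geometrically with ratio at most $\tfrac23$ and may be bounded by $\mu_{\mathsf{w},k}^{-1}(2/3)^{j}$. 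In the interest of brevity this geometric decay can instead be quoted directly from the Slepian literature (e.g.\ \citep{osipov2013prolate}), which is the route I expect to take.

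It then remains to sum the geometric tail and fix the truncation level. For any $N\geq N_\star\geq 2\lfloor e\mathsf{w}\rfloor+1$ the bound of the previous paragraph gives
\begin{equation*}
    \sum_{j=N+1}^\infty \sqrt{2j+1}\,|\beta_j^k| \leq \frac{1}{\mu_{\mathsf{w},k}}\sum_{j=N+1}^\infty \Big(\tfrac{2}{3}\Big)^{j} = \frac{2}{\mu_{\mathsf{w},k}}\Big(\tfrac{2}{3}\Big)^{N} \leq \frac{3}{c_\star}\Big(\tfrac{2}{3}\Big)^{N},
\end{equation*}
where the last step uses $c_\star = \min_{k\in\Lambda}\mu_{\mathsf{w},k}\leq\mu_{\mathsf{w},k}$, making the estimate uniform over $k\in\Lambda$. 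The second branch in the definition \eqref{eqdef:Nstar} of $N_\star$, namely $N_\star\geq \log(3/(c_\star\varepsilon))/\log(3/2)$, is precisely the condition $\tfrac{3}{c_\star}(2/3)^{N_\star}\leq\varepsilon$, so the tail is $\leq\varepsilon$ for every $N\geq N_\star$ and every $k\in\Lambda$, which is \eqref{eq:ApprSlepianUnif}. The main obstacle is the decay estimate of the middle paragraph: pinning down the geometric ratio and the threshold $2\lfloor e\mathsf{w}\rfloor+1$, which is exactly what forces the first branch of $N_\star$ and is where the spherical-Bessel/PSWF structure does all the work.
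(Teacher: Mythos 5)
Your proposal is correct and follows essentially the same route as the paper: identify the partial sums as truncated Legendre expansions, invoke the exponential decay $|\beta_j^k|\lesssim \mu_{\mathsf{w},k}^{-1}2^{-j}$ for $j$ beyond the threshold $\approx 2\lfloor e\mathsf{w}\rfloor$ (which the paper, like you ultimately propose, quotes from \citep[Theorem~7.2]{osipov2013prolate}), and sum the geometric tail to obtain $\tfrac{3}{c_\star}(2/3)^N\leq\varepsilon$ under the two branches of \eqref{eqdef:Nstar}. Your spherical-Bessel sketch of the decay bound and your appeal to analyticity for uniform convergence (where the paper argues via absolute convergence in $L^\infty$ plus $L^2$ convergence and continuity) are only cosmetic departures.
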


\begin{lemma} \label{lem:AppSumOfLeg}
Let $\varepsilon \in (0,1)$, and $N \in \N$. Let $\{\beta_j\}_{j=0}^N $ be a set of real numbers such that $|\beta_j| \leq c_0$ for some $c_0>0$.
Let $P_j$ be the normalized Legendre polynomial of degree $j$. 
Then there exists an NN $\widetilde{\Phi}_{\varepsilon, N}$ such that
\begin{equation*}
    \Big\| \sum_{j=0}^N \beta_j P_j -  {\rm R}( \widetilde{\Phi}_{\varepsilon, N}) \Big\|_{L^{\infty}([-1,1])} \leq c_0 \varepsilon.
\end{equation*}
Moreover, 
\begin{align*}
    L (\widetilde{\Phi}_{\varepsilon, N}) &\leq  C(1+ \log_2 N)(N + \log_2(N/\varepsilon) ), \\
    W ( \widetilde{\Phi}_{\varepsilon, N} ) &\leq C (N^2+N)(N+\log_2(N/\varepsilon)),
\end{align*}
for some universal constant $C>0$.
\end{lemma}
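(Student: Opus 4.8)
The plan is to build $\widetilde{\Phi}_{\varepsilon,N}$ by first approximating each individual normalized Legendre polynomial and then assembling the weighted sum through network calculus. Concretely, I would invoke Proposition~\ref{prop:AppLeg1D} with target accuracy $\delta := \varepsilon/(N+1)$ to produce, for every degree $j \in \{0,\dots,N\}$, a ReLU network $\Phi_j$ satisfying $\|P_j - {\rm R}(\Phi_j)\|_{L^{\infty}([-1,1])} \le \delta$, with the depth and width controlled by the bounds recorded there specialized to $j \le N$ and this $\delta$. The choice $\delta = \varepsilon/(N+1)$ is dictated by the error budget: since $|\beta_j| \le c_0$, the triangle inequality gives
\[
\Big\| \sum_{j=0}^N \beta_j P_j - \sum_{j=0}^N \beta_j\, {\rm R}(\Phi_j)\Big\|_{L^{\infty}([-1,1])} \le \sum_{j=0}^N |\beta_j|\,\|P_j - {\rm R}(\Phi_j)\|_{L^{\infty}([-1,1])} \le c_0 (N+1)\delta = c_0 \varepsilon,
\]
so that $\widetilde{\Phi}_{\varepsilon,N}$ can be taken to be any network whose realization equals $\sum_{j=0}^N \beta_j\, {\rm R}(\Phi_j)$.

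Next I would realize this linear combination as a single network using the parallelization and linear-combination operations from Appendix~\ref{appx:basicnetworks}. Parallelizing $\Phi_0,\dots,\Phi_N$ yields one network whose output is the vector $({\rm R}(\Phi_0),\dots,{\rm R}(\Phi_N))$; since the terminal layer of each $\Phi_j$ is affine, the scalars $\beta_j$ can be folded directly into a single affine output layer producing $\sum_j \beta_j\, {\rm R}(\Phi_j)$, adding no depth. For the architecture bounds, the depth of the parallelized network equals the largest depth among the $\Phi_j$, which, since the depth bound of Proposition~\ref{prop:AppLeg1D} is nondecreasing in $j$, is the depth needed for $P_N$ at accuracy $\delta$. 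Substituting $\delta = \varepsilon/(N+1)$ and using $\log_2(N/\delta) = O(\log_2(N/\varepsilon))$ reproduces $L(\widetilde{\Phi}_{\varepsilon,N}) \le C(1+\log_2 N)(N + \log_2(N/\varepsilon))$. For the width, $W$ is additive under parallelization, so bounding each of the $N+1$ summands crudely by the $j=N$ case gives $W(\widetilde{\Phi}_{\varepsilon,N}) \le C(N+1)^2(N+\log_2(N/\varepsilon)) \le C'(N^2+N)(N+\log_2(N/\varepsilon))$.

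The step requiring the most care is the architecture bookkeeping in the parallelization, specifically the depth-matching. Parallelizing networks of unequal depth forces one to pad the shallower $\Phi_j$ up to the common maximal depth $L_{\max}$ by inserting ReLU identity blocks (each scalar identity costing a bounded number of nonzero weights per layer). I would verify that the total padding cost, of order $N\,L_{\max} = O\big(N(1+\log_2 N)(N+\log_2(N/\varepsilon))\big)$, is strictly lower order than the main width term $O\big(N^2(N+\log_2(N/\varepsilon))\big)$ --- which holds because $\log_2 N \le N$ --- so that the padding is absorbed into the universal constant and does not degrade the stated bounds. Apart from this accounting and the calibration of $\delta$ to absorb the factor $\sum_j |\beta_j| \le c_0(N+1)$, the remaining computations are routine applications of the network-calculus lemmas.
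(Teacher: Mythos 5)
Your proposal is correct and follows essentially the same route as the paper's proof: approximate each $P_j$ via Proposition~\ref{prop:AppLeg1D}, assemble the weighted sum with the linear-combination/parallelization calculus of Appendix~\ref{appx:basicnetworks} (your explicit padding-and-affine-folding argument is precisely the content of Proposition~\ref{prop:NNLinComb}, which the paper invokes as a black box), and conclude by the triangle inequality. The only cosmetic difference is that you calibrate the per-polynomial accuracy to $\varepsilon/(N+1)$ up front, whereas the paper runs the argument at accuracy $\varepsilon$, obtains error $c_0 N\varepsilon$, and rescales $N\varepsilon \mapsto \varepsilon$ at the end; both yield the same $\log_2(N/\varepsilon)$ factors in the architecture bounds.
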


The proofs of Lemmas~\ref{lem:ApproxOfSlep1D},~\ref{lem:AppSumOfLeg} are given at the end of this subsection, in the order in which they are presented.

\begin{proof}[Proof of Proposition~\ref{prop:ApproxSlepian}]
We begin with the case $\Lambda=\Lambda^{\rm HC}_{n-1}\subset\mathbb{N}$, i.e. when $\{\varphi_{\mathsf{w}, k}\}_{k \in \Lambda}$ is a finite subset of the Slepian orthonormal basis of $L^2_{\bf u}([-1,1])$. 
Let $N_{\star}$ be as given in \eqref{eqdef:Nstar}, with $\varepsilon\in (0,1)$.
Note, when $\Lambda=\Lambda^{\rm HC}_{n-1}=\{0,\dots,n-1\}$,
\begin{equation} \label{eq:cstarreq}
    c_{\star} = \min_{k\in\Lambda} \mu_{\mathsf{w},k} = \mu_{\mathsf{w},n-1}.
\end{equation}
By Lemma~\ref{lem:ApproxOfSlep1D}, \eqref{eq:ApprSlepianUnif} holds with $N=N_{\star}$. 
To apply Lemma~\ref{lem:AppSumOfLeg} to our context, we need an estimate for $\max_{k \in \Lambda} \max_{j = 0, \dots, N_{\star}} |\beta_j^k|$, which will act as the value $c_0$.
Since $n\geq 2$, we obtain from Proposition~\ref{prop:gammaw} that 
\begin{equation} \label{eq:ngammaw}
    \|\varphi_{\mathsf{w},k}\|_{L^{\infty}([-1,1])} \leq \max\Big\{\frac{4\mathsf{w}}{\pi}, (k+1)^{\gamma(\mathsf{w})}\Big\} \leq n^{\gamma(\mathsf{w})},
\end{equation}
and subsequently, from \eqref{eqdef:betacoef}, 
\begin{multline*} 
    |\beta_j^k| 
    = \frac{1}{2} \Big|\int_{-1}^1 \varphi_{\mathsf{w},k}(x) \overline{P_j}(x) \, {\rm d}x \Big|
    \leq \|P_j\|_{L^2_{\bf u}([-1,1])} \|\varphi_{\mathsf{w},k}\|_{L^{\infty}_{\bf u}([-1,1])} \\
    = \|P_j\|_{L^2_{\bf u}([-1,1])} \|\varphi_{\mathsf{w},k}\|_{L^{\infty}([-1,1])}
    \leq n^{\gamma(\mathsf{w})}.
\end{multline*}
Thus, Lemma \ref{lem:AppSumOfLeg} ensures the existence of NN $\widetilde{\Phi}_{\varepsilon,N_{\star}}^k$ satisfying 
\begin{equation} \label{eq:NNuniformLambda}
    \Big\| \sum_{j=0}^{N_{\star}} \beta_j^k P_j -  {\rm R} (\widetilde{\Phi}_{\varepsilon,N_{\star}}^k) \Big\|_{L^{\infty}([-1,1])} \leq n^{\gamma(\mathsf{w})}\varepsilon,
\end{equation} 
with the following respective numbers of layers and weights
\begin{align}
    \label{eq:layers1d}
    L (\widetilde{\Phi}_{\varepsilon,N_{\star}}^k) &\leq  C(1+\log_2 N_{\star})(N_{\star} + \log_2(N_{\star}/\varepsilon)), \\
    \label{eq:weight1d}
    W (\widetilde{\Phi}_{\varepsilon,N_{\star}}^k) &\leq C(N_{\star}^2+N_{\star})(N_{\star}+\log_2(N_{\star}/\varepsilon)).
\end{align}
By setting $N = N_{\star}$ in \eqref{eqdef:Nstar} and combining it with \eqref{eq:NNuniformLambda}, we derive
\begin{equation} \label{eq:NNfor1d}
    \Big \| \varphi_{\mathsf{w}, k } - {\rm R} (\widetilde{\Phi}_{\varepsilon,N_{\star}}^k) \Big \|_{L^{\infty}([-1,1])} 
    \leq \big(1+ n^{\gamma(\mathsf{w})}\big)\varepsilon, 
\end{equation} 
and we conclude for the case $\Lambda^{\rm HC}_{n-1}\subset\mathbb{N}$ by letting $\Psi_{\varepsilon,N_{\star}}^k=\widetilde{\Phi}_{\varepsilon,N_{\star}}^k$.

For the case $\Lambda=\Lambda^{\rm HC}_{n-1}\subset\mathbb{N}^2$, we write, $\varphi_{\mathsf{w}, \vec{\nu}} = \varphi_{\mathsf{w}, \nu_1} \otimes \varphi_{\mathsf{w}, \nu_2}$, for each $\vec{\nu}=(\nu_1, \nu_2)\in\Lambda^{\rm HC}_{n-1}$. 
Note, $\nu_j\in \{0,\dots,n-1\}$.
Let $N_{\star}$ remain as previously defined with the choice $c_{\star}$ in \eqref{eq:cstarreq}.
Then by \eqref{eq:NNfor1d}, for $j=1,2$, there exists an NN $\widetilde{\Phi}_{\varepsilon,N_{\star}}^{\nu_j}$, such that 
\begin{equation} \label{eq:diffdimone}
    \| \varphi_{\mathsf{w}, \nu_j} - {\rm R}(\widetilde{\Phi}_{\varepsilon,N_{\star}}^{\nu_j}) \|_{L^{\infty}([-1,1])} \leq (1+n^{\gamma(\mathsf{w})})\varepsilon.
\end{equation}
This, together with \eqref{eq:ngammaw}, implies
\begin{equation} \label{eq:NNinftydimone}
    \| {\rm R}(\widetilde{\Phi}_{\varepsilon,N_{\star}}^{\nu_j}) \|_{L^{\infty}([-1,1])} 
    \leq 
    \|\varphi_{\mathsf{w}, \nu_j}\|_{L^{\infty}([-1,1])} + (1+n^{\gamma(\mathsf{w})})\varepsilon \leq 2n^{\gamma(\mathsf{w})} + 1.
\end{equation}
Combining \eqref{eq:ngammaw}, \eqref{eq:diffdimone}, \eqref{eq:NNinftydimone}, we derive for $\vec{\nu}=(\nu_1, \nu_2)\in\Lambda^{\rm HC}_{n-1}$ that
\begin{equation} \label{eq:splittingsum}
    \| \varphi_{\mathsf{w}, \nu_1}\otimes\varphi_{\mathsf{w},\nu_2}  - {\rm R}(\widetilde{\Phi}_{\varepsilon,N_{\star}}^{\nu_1}) \otimes {\rm R} (\widetilde{\Phi}_{\varepsilon, N_{\star}}^{\nu_2}) \|_{L^{\infty}([-1,1]^2)} 
    \leq a + b,
\end{equation}
where
\begin{align*}
    a &= \| \varphi_{\mathsf{w}, \nu_1} \otimes \varphi_{\mathsf{w}, \nu_2} - \varphi_{\mathsf{w}, \nu_1}\otimes {\rm R}(\widetilde{\Phi}_{\varepsilon,N_{\star}}^{\nu_2}) \|_{L^{\infty}([-1,1]^2)} \\
    & \leq \| \varphi_{\mathsf{w}, \nu_1} \|_{L^{\infty}([-1,1])} \| \varphi_{\mathsf{w}, \nu_2}  - {\rm R}(\widetilde{\Phi}_{\varepsilon,N_{\star}}^{\nu_2}) \|_{L^{\infty}([-1,1])} 
    \leq n^{\gamma(\mathsf{w})}(1+n^{\gamma(\mathsf{w})})\varepsilon,
\end{align*}
and 
\begin{align*}
    b &= \| \varphi_{\mathsf{w}, \nu_1}\otimes {\rm R}(\widetilde{\Phi}_{\varepsilon,N_{\star}}^{\nu_2}) - {\rm R}(\widetilde{\Phi}_{\varepsilon,N_{\star}}^{\nu_1}) \otimes {\rm R}(\widetilde{\Phi}_{\varepsilon,N_{\star}}^{\nu_2}) 
    \|_{L^{\infty}([-1,1]^2)} \\
    &\leq \| {\rm R}(\widetilde{\Phi}_{\varepsilon,N_{\star}}^{\nu_2}) \|_{L^{\infty}([-1,1])}  
    \| \varphi_{\mathsf{w}, \nu_1} - {\rm R}(\widetilde{\Phi}_{\varepsilon,N_{\star}}^{\nu_1})
    \|_{L^{\infty}([-1,1])} 
    \leq (2n^{\gamma(\mathsf{w})} + 1)(1+n^{\gamma(\mathsf{w})})\varepsilon.
\end{align*}
Therefore, \eqref{eq:splittingsum} yields
\begin{equation}\label{eq:Esti1}
    \| \varphi_{\mathsf{w}, \nu_1} \otimes \varphi_{\mathsf{w}, \nu_2} - {\rm R}(\widetilde{\Phi}_{\varepsilon,N_{\star}}^{\nu_1}) \otimes {\rm R} (\widetilde{\Phi}_{\varepsilon,N_{\star}}^{\nu_2}) \|_{L^{\infty}([-1,1])} 
    \leq (3n^{2\gamma(\mathsf{w})} + 4n^{\gamma(\mathsf{w})} +1) \varepsilon.
\end{equation}
Next, we utilize the network construction $\tilde{\times}_{\varepsilon, B}$ in  Proposition \ref{prop:Multiplication}, with $B=2n^{\gamma(\mathsf{w})}+1$ (obtained from \eqref{eq:NNinftydimone}), as well as the concatenation and parallelization operations, denoted as $\odot$ and ${\rm P}$, from Definitions~\ref{def:concat} and \ref{def:Parallelization} respectively, to construct
\begin{equation*}
    \Psi_{\varepsilon,N_{\star}}^{\vec{\nu}} :=
    \tilde{\times}_{\varepsilon, B} \odot {\rm P}(\widetilde{\Phi}_{\varepsilon,N_{\star}}^{\nu_1},\widetilde{\Phi}_{\varepsilon,N_{\star}}^{\nu_2}).
\end{equation*}
It follows from the construction that
\begin{equation} \label{eq:ProdMat}
    \| {\rm R}(\widetilde{\Phi}_{\varepsilon,N_{\star}}^{\nu_1}) \otimes {\rm R}(\widetilde{\Phi}_{\varepsilon,N_{\star}}^{\nu_2}) - {\rm R}(\Psi_{\varepsilon,N_{\star}}^{\vec{\nu}}) \|_{L^{\infty}([-1,1]^2)} 
    \leq \varepsilon. 
\end{equation}
Combining \eqref{eq:Esti1}, \eqref{eq:ProdMat}, we obtain
\begin{equation} \label{eq:Approx2D}
    \| \varphi_{\mathsf{w}, \nu_1} \otimes \varphi_{\mathsf{w}, \nu_2}  - {\rm R}(\Psi_{\varepsilon,N_{\star}}^{\vec{\nu}}) \|_{L^{\infty}([-1,1]^2)}
    \leq (3n^{2\gamma(\mathsf{w})} + 4n^{\gamma(\mathsf{w})} +2)\varepsilon.
\end{equation}
Furthermore, by applying \eqref{eq:layers1d}, \eqref{eq:weight1d}, Proposition~\ref{prop:Multiplication}, and Remark~\ref{rem:NNdefsprops}, we conclude 
\begin{align*}
    L(\Psi_{\varepsilon,N_{\star}}^{\vec{\nu}}) &\leq C\big((1+\log_2N_{\star})(N_{\star}+\log_2(N_{\star}/\varepsilon)) + (1+ \log_2 ((2n^{\gamma(\mathsf{w})}+1)/\varepsilon)\big) \\
    W(\Psi_{\varepsilon,N_{\star}}^{\vec{\nu}}) &\leq C\big((N_{\star}^2 + N_{\star})(N_{\star} + \log_2(N_{\star}/\varepsilon)) + (1+ \log_2 ((2n^{\gamma(\mathsf{w})}+1)/\varepsilon)\big),
\end{align*}
and thus, we are done with the case $\Lambda^{\rm HC}_{n-1}\subset\mathbb{N}^2$.

For the case $d=3$, let $\vec{\nu} = (\nu_1, \nu_2, \nu_3) \in \Lambda_{n-1}^{\rm HC}$ and denote $\varphi_{\mathsf{w}, \vec{\nu}} = \varphi_{\mathsf{w}, \nu_1}\otimes \varphi_{\mathsf{w}, \nu_2}\otimes \varphi_{\mathsf{w}, \nu_3}$.
By \eqref{eq:NNfor1d}, there exists an NN  $\widetilde{\Phi}_{\varepsilon,N_{\star}}^{\nu_1}$ such that
\begin{equation} \label{eq:NNfor1drecall}
    \| \varphi_{\mathsf{w}, \nu_1} - {\rm R} (\widetilde{\Phi}_{\varepsilon,N_{\star}}^{\nu_1}) \|_{L^{\infty}([-1,1])} \leq (1+n^{\gamma(\mathsf{w})})\varepsilon.
\end{equation}
Denote $\vec{\nu}^*= (\nu_2, \nu_3)$.
Then by \eqref{eq:Approx2D}, there also exists an NN $\Psi_{\varepsilon,N_{\star}}^{\vec{\nu}^*}$ satisfying
\begin{equation} \label{eq:diffdimtwo}
    \| \varphi_{\mathsf{w}, \nu_2} \otimes \varphi_{\mathsf{w}, \nu_3}  - {\rm R}(\Psi_{\varepsilon,N_{\star}}^{\vec{\nu}^*}) \|_{L^{\infty}([-1,1]^2)} \leq (3n^{2\gamma(\mathsf{w})} + 4n^{\gamma(\mathsf{w})} +2)\varepsilon.
 \end{equation}
Altogether, using \eqref{eq:ngammaw}, \eqref{eq:NNinftydimone}, \eqref{eq:NNfor1drecall}, \eqref{eq:diffdimtwo}, via similar arguments to those leading to \eqref{eq:Esti1}, we conclude 
\begin{multline} \label{eq:NNdiffdimtwo}
    \| \varphi_{\mathsf{w}, \nu_1} \otimes \varphi_{\mathsf{w}, \nu_2}\otimes \varphi_{\mathsf{w}, \nu_3}  - {\rm R} (\widetilde{\Phi}_{\varepsilon,N_{\star}}^{\nu_1}) \otimes {\rm R}(\Psi_{\varepsilon,N_{\star}}^{\vec{\nu}^*}) \|_{L^{\infty}([-1,1]^3)}
    \\
    \leq (7n^{3\gamma(\mathsf{w})} + 12n^{2\gamma(\mathsf{w})} + 8n^{\gamma(\mathsf{w})} +2)\varepsilon.
\end{multline}
Let $\Phi_I$ be the NN formulation in Proposition \ref{prop:ApproxIdent} (with $d=1$) such that $\Psi_{\varepsilon,N_{\star}}^{\nu_1}:= \Phi_I \odot \widetilde{\Phi}_{\varepsilon,N_{\star}}^{\nu_1}$ and $\Psi_{\varepsilon,N_{\star}}^{\vec{\nu}^*}$ have the same number of layers. 
We construct
\begin{equation*}
    \Psi_{\varepsilon,N_{\star}}^{\vec{\nu}} := \tilde{\times}_{\varepsilon, B'}\odot {\rm P}(\Psi_{\varepsilon,N_{\star}}^{\nu_1}, \Psi_{\varepsilon,N_{\star}}^{\vec{\nu}^*})
\end{equation*}
where $B'=4n^{2\gamma(\mathsf{w})} + 4n^{\gamma(\mathsf{w})} +2$.
Then from Proposition~\ref{prop:Multiplication} again and \eqref{eq:NNdiffdimtwo}, 
\begin{equation*}\label{eq:Approx3D}
    \| \varphi_{\mathsf{w},\nu_1} \otimes \varphi_{\mathsf{w},\nu_2} \otimes \varphi_{\mathsf{w},\nu_3}  - {\rm R} (\Psi_{\varepsilon,N_{\star}}^{\vec{\nu}}) \|_{L^{\infty}([-1,1]^3)} \leq (7n^{3\gamma(\mathsf{w})} + 12n^{2\gamma(\mathsf{w})} + 8n^{\gamma(\mathsf{w})} +3)\varepsilon.
\end{equation*}
Since it is readily verified from the construction of $\Psi_{\varepsilon,N_{\star}}^{\nu_1}$ and \eqref{eq:ngammaw}, \eqref{eq:NNfor1drecall}, \eqref{eq:diffdimtwo} that
\begin{equation*}
    \max\big\{|{\rm R}(\Psi_{\varepsilon,N_{\star}}^{\nu_1})|, |{\rm R}(\Psi_{\varepsilon,N_{\star}}^{\vec{\nu}^*})|\} \leq 4n^{2\gamma(\mathsf{w})} + 4n^{\gamma(\mathsf{w})} +2,
\end{equation*}
employing similar reasoning and computations analogous to those before, delivers
\begin{align*}
    L(\Psi_{\varepsilon,N_{\star}}^{\vec{\nu}}) &\leq C\big((1+\log_2N_{\star})(N_{\star}+\log_2(N_{\star}/\varepsilon)) + (1+ \log_2 ((4n^{2\gamma(\mathsf{w})} + 4n^{\gamma(\mathsf{w})} +2)/\varepsilon)\big) \\
    W(\Psi_{\varepsilon,N_{\star}}^{\vec{\nu}}) &\leq C\big((N_{\star}^2 + N_{\star})(N_{\star} + \log_2(N_{\star}/\varepsilon)) + (1+ \log_2 ((4n^{2\gamma(\mathsf{w})} + 4n^{\gamma(\mathsf{w})} +2)/\varepsilon)\big).
\end{align*}
The proof is now complete.
\end{proof}

\begin{proof}[Proof of Lemma \ref{lem:ApproxOfSlep1D}]
The set of all normalized Legendre polynomials is an orthonormal basis for $L_{\bf u}^2([-1,1])$. 
Therefore, for every Slepian basis function $\varphi_{\mathsf{w}, k}$, there exist $\{\beta_j^k\}_{j\in\mathbb{N}_0} \in \ell^2(\mathbb{N}_0)$ satisfying \eqref{eqdef:betacoef} such that
\begin{equation}\label{eq:L2ConvSlep}
    \lim_{N\to\infty} \Big\| \sum_{j=0}^N \beta_j^k P_j - \varphi_{\mathsf{w}, k} \Big\|_{L^2_{\bf u}([-1,1])} = 0;
\end{equation} 
see also \citep[Equation (2.47)]{osipov2013prolate}.
Since $L^{\infty}([-1,1])$ is a Banach space, the absolute convergence of the series $\sum_{j=0}^{\infty} \beta_j^k P_j$, i.e.
\begin{equation} \label{eq:tailconvergence}
    \lim_{N\to\infty} \sum_{j=N+1}^{\infty} |\beta_j^k| \|P_j\|_{L^{\infty}([-1,1])} = 0,
\end{equation}
will guarantee the existence of the limit $\sum_{j=0}^{\infty} \beta_j^k P_j$ as a function in $L^{\infty}([-1,1])$ \citep[Theorem~5.1]{folland1999real}.
Once held, \eqref{eq:tailconvergence} together with \eqref{eq:L2ConvSlep}, and the continuity of $\sum_{j=0}^N \beta_j^k P_j$ and $\varphi_{\mathsf{w},k}$, yields
\begin{equation} \label{eq:Linftylimit}
    \sum_{j=0}^{\infty} \beta_j^k P_j = \varphi_{\mathsf{w}, k},
\end{equation}
on $[-1,1]$.
We proceed to demonstrate \eqref{eq:tailconvergence} for all $k\in\Lambda$. 
By \citep[Theorem 7.2]{osipov2013prolate}, if $j \geq 2(\lfloor e \mathsf{w} \rfloor +1)$, then 
\begin{equation}\label{eq:beta}
    | \beta_j^k | 
    = \frac{1}{2} \Big| \int_{-1}^1 \varphi_{\mathsf{w},k}(x) \overline{P_j}(x) \, {\rm d}x \Big| \leq \dfrac{1}{\mu_{\mathsf{w}, k}} \Big( \dfrac{1}{2}\Big)^j 
    \leq \frac{1}{c_{\star}} \Big( \dfrac{1}{2}\Big)^j.
\end{equation} 
Using \eqref{eq:NormLegendr}, \eqref{eq:beta}, we find for $N\geq 2\lfloor e \mathsf{w} \rfloor +1$
\begin{align} \label{eq:TailBound1}
    \nonumber \sum_{j=N+1}^{\infty} |\beta_j^k| \| P_j \|_{L^{\infty}([-1,1])} 
    &\leq \dfrac{1}{c_{\star}} \sum_{j=N+1}^{\infty} \dfrac{\sqrt{2j+1}}{2^j}   \\
    \nonumber &\leq \dfrac{1}{c_{\star}} \sum_{j=N}^{\infty} \dfrac{\sqrt{j}}{2^j}\\
    \nonumber &\leq \dfrac{1}{c_{\star}} \sum_{j=N}^{\infty}  \Big(\dfrac{2}{3} \Big)^j \\
    &= \frac{3}{c_{\star}} \Big(\dfrac{2}{3} \Big)^N,
\end{align}
which is \eqref{eq:tailconvergence}, valid for all $k\in\Lambda$.
Hence, \eqref{eq:Linftylimit} holds, and we conclude from \eqref{eq:TailBound1} that, 
\begin{equation} \label{eq:TailBound2}
    \Big\| \sum_{j=0}^N \beta_j^k P_j - \varphi_{\mathsf{w}, k} \Big\|_{L^{\infty}([-1,1])} 
    = \Big \| \sum_{j=N+1}^{\infty}\beta_j^k P_j \Big \|_{L^{\infty}([-1,1])} \leq \frac{3}{c_{\star}} \Big(\dfrac{2}{3} \Big)^N,
\end{equation}
whenever $N\geq 2\lfloor e \mathsf{w} \rfloor -1$. 
For the final step, we require in addition that 
\begin{equation*}
    N\geq \max\Big\{1, \frac{\log(3/(c_{\star}\varepsilon))}{\log (3/2)}\Big\}
\end{equation*}
in \eqref{eq:TailBound2}, to get \eqref{eq:ApprSlepianUnif}, as desired. 
\end{proof}


\begin{proof}[Proof of Lemma~\ref{lem:AppSumOfLeg}]
Let $j=0,\dots,N$. 
A result derived from \citep[Proposition 2.11]{opschoor2022exponential}, and stated as Proposition~\ref{prop:AppLeg1D} in Appendix~\ref{appx:basicnetworks}, guarantees for $j=1,\dots,N$, the existence of an NN $\Phi^j_{\varepsilon}$ with both input dimension and output dimension $d^j_{L_j}$ equal to $1$, such that
\begin{equation} \label{eq:indLeg}
    \| P_j - {\rm R}(\Phi^j_{\varepsilon})\|_{L^{\infty}([-1,1])} \leq \varepsilon,
\end{equation}
and for $j=0$, an NN $\Phi^0_{\varepsilon}$ satisfying ${\rm R}(\Phi^0_{\varepsilon})= P_0=1$ on $[-1,1]$.
Moreover,
\begin{equation} \label{eq:lwpoly}
    L(\Phi^j_{\varepsilon}) \leq C(1+ \log_2 j)(j+ \log_2 (1/\varepsilon)) \quad\text{ and }\quad
    W(\Phi^j_{\varepsilon}) \leq Cj(j+\log_2(1/\varepsilon)),
\end{equation}
and $L(\Phi^0_{\varepsilon}) = 2$, $W(\Phi^0_{\varepsilon}) = 1$.
Therefore, by Proposition~\ref{prop:NNLinComb}, there exists an NN $\widetilde{\Phi}_{\varepsilon, N}$ that fulfills
\begin{equation} \label{eq:lincombrealized}
    {\rm R}(\widetilde{\Phi}_{\varepsilon, N}) = \sum_{j=0}^N \beta_j {\rm R}(\Phi^j_{\varepsilon}),
\end{equation}
with, from \eqref{eq:lwpoly}, the number of layers being at most
\begin{eqnarray} \label{eq:giantlayer}
    L(\widetilde{\Phi}_{\varepsilon, N}) = \max_{0\leq j \leq N} L (\Phi^j_{\varepsilon}) \leq C(1+ \log_2 N)(N+ \log_2 (1/\varepsilon)),
\end{eqnarray}
and the number of weights being at most, 
\begin{align} \label{eq:giantweight}
    \nonumber W (\widetilde{\Phi}_{\varepsilon, N}) 
    &\leq 2 \sum_{j=0}^N W( \Phi^j_{\varepsilon}) + 2\sum_{j=0}^N d^j_{L_j} \Big(\max_{0\leq j\leq N} L (\Phi^j_{\varepsilon})\Big)\\
    \nonumber &\leq C\sum_{j=1}^N j(j+\log_2(1/\varepsilon)) + CN(1+ \log_2 N)(N+ \log_2 (1/\varepsilon))\\
    \nonumber &\leq C (N^2+N)(N+\log_2(1/\varepsilon)) + CN(1+ \log_2 N)(N+ \log_2 (1/\varepsilon))\\
    &\leq C (N^2+N)(N+\log_2(1/\varepsilon)).
\end{align}
Now, from \eqref{eq:indLeg}, \eqref{eq:lincombrealized}, we obtain
\begin{align} \label{eq:sumLeg}
    \nonumber \Big\| \sum_{j=0}^N \beta_j P_j - {\rm R}(\widetilde{\Phi}_{\varepsilon, N}) \Big\|_{L^{\infty}([-1,1])} &= 
    \Big\| \sum_{j=0}^N \beta_j P_j - \sum_{j=0}^N \beta_j {\rm R}(\Phi^j_{\varepsilon}) \Big\|_{L^{\infty}([-1,1])} \\
    \nonumber &\leq \sum_{j=0}^N |\beta_j| \Big\| P_j - {\rm R}(\Phi^j_{\varepsilon})\Big\|_{L^{\infty}([-1,1])} \\
    &\leq c_0N\varepsilon.
\end{align}
By replacing $N\varepsilon$ with $\varepsilon$ in \eqref{eq:giantlayer}, \eqref{eq:giantweight}, \eqref{eq:sumLeg}, we arrive at the desired conclusion. 
\end{proof}

\subsection{Proof of Theorem \ref{thm:ApproxSpan}} \label{sec:ProofTheo3}

Let $g \in \mathcal{S}_{\Lambda}$, i.e.
\begin{equation} \label{eq:lincombSlep}
    g = \sum_{\vec{\nu}\in\Lambda} b_{\vec{\nu}} \varphi_{\mathsf{w},\vec{\nu}},
\end{equation}
for $b_{\vec{\nu}} := \frac{1}{2}\int_{-1}^1 g(x)\varphi_{\mathsf{w},\vec{\nu}}(x) \, {\rm d}x$. 
According to Proposition~\ref{prop:ApproxSlepian}, for a given $\varepsilon>0$ and each Slepian basis function $\varphi_{\mathsf{w},\vec{\nu}}$, there exists an NN $\Psi_{\varepsilon}^{\vec{\nu}}$ such that\footnote{Given that $N_{\star}$, in \eqref{eqdef:Nstar}, depends on $\varepsilon$, we will omit its reference in the subscript and simply write $\Psi^{\vec{\nu}}_{\varepsilon}$ going forward.} 
\begin{equation} \label{eq:indSlepapp}
     \|\varphi_{\mathsf{w}, \vec{\nu}} - {\rm R}(\Psi_{\varepsilon}^{\vec{\nu}}) \|_{L^{\infty}([-1, 1]^d)}
    \leq B(d,n)\varepsilon. 
\end{equation}
In turn, by Proposition~\ref{prop:NNLinComb}, there exists an NN $\Psi_{g;\varepsilon}$ satisfying 
\begin{equation} \label{eq:lincombreal}
    {\rm R}(\Psi_{g;\varepsilon}) = \sum_{\vec{\nu}\in\Lambda} b_{\vec{\nu}} {\rm R}(\Psi_{\varepsilon}^{\vec{\nu}}).
\end{equation}
Thus we obtain
\begin{align} \label{eq:approxg}
    \nonumber \|g - {\rm R}(\Psi_{g;\varepsilon})
    \|_{L^{\infty}([-1, 1]^d)}
    &\leq \sum_{\vec{\nu}\in\Lambda} |b_{\vec{\nu}}| \| \varphi_{\mathsf{w}, \vec{\nu}} - {\rm R}(\Psi_{\varepsilon}^{\vec{\nu}}) \|_{L^{\infty}_{\bf u}([-1, 1]^d)} \\
    &\leq \sqrt{\texttt{\#}\Lambda}\,B(d,n)\|g\|_{L^2_{\bf u}([-1,1]^d)}\varepsilon,
\end{align}
from \eqref{eq:indSlepapp} and the fact that $\|g\|_{L^2_{\bf u}([-1,1]^d)} = (\sum_{\vec{\nu}\in\Lambda} |b_{\vec{\nu}}|^2)^{1/2}$.
It also follows from Proposition~\ref{prop:ApproxSlepian} and another application of Proposition~\ref{prop:NNLinComb} that
\begin{equation} \label{eq:LWSLambda}
    \begin{split}
        L(\Psi_{g;\varepsilon}) 
        &
        \leq C \big((1+\log_2N_{\star})(N_{\star}+\log_2(N_{\star}/\varepsilon)) + (1+ \log_2 (M(d,n)/\varepsilon)\big),\\
        W(\Psi_{g;\varepsilon}) 
        & 
        \leq C \texttt{\#}\Lambda\big((N_{\star}^2 + N_{\star})(N_{\star} + \log_2(N_{\star}/\varepsilon)) + (1+ \log_2 (M(d,n)/\varepsilon)\big).
    \end{split}
\end{equation}
Now, let 
\begin{equation} \label{eqdef:NLambda}
    \mathcal{N}_{\Lambda,\mathsf{w},\varepsilon}:=\{\Psi_{g;\varepsilon}: \Psi_{g;\varepsilon} \text{ satisfies } \eqref{eq:lincombreal} \text{ where } g \text{ takes the form of } \eqref{eq:lincombSlep}\}.
\end{equation}
We will show in what follows that, given $\mathcal{N}_{\Lambda,\mathsf{w},\varepsilon}$ in \eqref{eqdef:NLambda}, the optimization problem \eqref{leastsquaresproblem2} admits a unique solution fulfilling \eqref{eq:PETFinal}.

Recall the matrix $A\in\mathbb{R}^{m\times\texttt{\#}\Lambda}$, introduced in \eqref{eqdef:A}, where $A_{j,\vec{\nu}} = \dfrac{1}{\sqrt{m}} \varphi_{\mathsf{w},\vec{\nu}}(\vec{y}_j)$.
Since the Slepian basis is an orthonormal set, the columns of $A$ are linearly independent.
Consider $\tilde{A}\in\mathbb{R}^{m\times\texttt{\#}\Lambda}$, such that $\tilde{A}_{j,\vec{\nu}} = \dfrac{1}{\sqrt{m}} {\rm R}(\Psi_{\varepsilon}^{\vec{\nu}})(\vec{y}_j)$.
Then it follows from \eqref{eq:indSlepapp} that the columns of $\tilde{A}$ are also linearly independent for sufficiently small $\varepsilon$.
Indeed, let $\varepsilon\in (0,1/(\sqrt{\texttt{\#}\Lambda}\, B(d,n)))$.
If $0 = \sum_{\vec{\nu}\in\Lambda} b_{\vec{\nu}} {\rm R}(\Psi^{\vec{\nu}}_{\varepsilon})$ for some not-all-zero scalars $\{b_{\vec{\nu}}\}_{\vec{\nu}\in\Lambda}$, then
\begin{equation*}
    \sum_{\vec{\nu}\in\Lambda} b_{\vec{\nu}} (\varphi^{\vec{\nu}}_{\varepsilon} - {\rm R}(\Psi^{\vec{\nu}}_{\varepsilon})) = \sum_{\vec{\nu}\in\Lambda} b_{\vec{\nu}} \varphi^{\vec{\nu}}_{\varepsilon},
\end{equation*}
which leads to 
\begin{align*}
    \Big(\sum_{\vec{\nu}\in\Lambda} |b_{\vec{\nu}}|^2\Big)^{1/2} 
    = \Big\| \sum_{\vec{\nu}\in\Lambda} b_{\vec{\nu}} \varphi^{\vec{\nu}}_{\varepsilon} \|_{L^2_{\bf u}([-1,1]^d)} 
    &= \Big\| \sum_{\vec{\nu}\in\Lambda} b_{\vec{\nu}} (\varphi^{\vec{\nu}}_{\varepsilon}-{\rm R}(\Psi^{\vec{\nu}}_{\varepsilon})) \Big \|_{L^2_{\bf u}([-1,1]^d)} \\
    &\leq \sum_{\vec{\nu}\in\Lambda} |b_{\vec{\nu}}| \|\varphi^{\vec{\nu}}_{\varepsilon}-{\rm R}(\Psi^{\vec{\nu}}_{\varepsilon})\|_{L^2_{\bf u}([-1,1]^d)} \\
    &\leq \varepsilon\sqrt{\texttt{\#}\Lambda}\, B(d,n) \Big(\sum_{\vec{\nu}\in\Lambda} |b_{\vec{\nu}}|^2\Big)^{1/2} 
    < \Big(\sum_{\vec{\nu}\in\Lambda} |b_{\vec{\nu}}|^2\Big)^{1/2},
\end{align*}
a contradiction.
Since, due to \eqref{eq:lincombreal}, every element in $\mathcal{N}_{\Lambda,\mathsf{w},\varepsilon}$ is realized as a linear combination of ${\rm R}(\Psi_{\varepsilon}^{\vec{\nu}})$, the problem \eqref{leastsquaresproblem2} is equivalent to finding $(\vec{z})^{\natural} \in \C^{\texttt{\#}\Lambda}$ such that
\begin{equation*}
    (\vec{z})^{\natural} \in \argmin_{\vec{z} \in \C^{\texttt{\#}\Lambda}} 
    \|\tilde{A}\vec{z} - \vec{b}\|_2^2
\end{equation*}
where $\vec{b} = \frac{1}{\sqrt{m}}(f(\vec{y}_j) + \eta_j)_{j=1}^m$. 
We look at the smallest singular value of $\tilde{A}$ next.
By Weyl's Lemma \citep[Theorem 4.3.1]{horn2012matrix}, we have
\begin{equation}\label{eq:Weyls}
    \max_{j =1,\dots,m} | \sigma_j(A) - \sigma_j( \tilde{A}) | \leq \| A - \tilde{A}\|_2
\end{equation}
where $\sigma_j$ denotes the $j$th singular value of $A$. 
Hence, for all unit vector $\vec{z}\in\C^{\texttt{\#}\Lambda}$, 
\begin{align*}
    \| (A-\tilde{A})\vec{z} \|_2^2 
    &= \sum_{j=1}^m | \sum_{\vec{\nu}\in\Lambda} (A_{j,\vec{\nu}} - \tilde{A}_{j,\vec{\nu}}) z_{\vec{\nu}} |^2 \\
    &\leq \sum_{j=1}^m \bigg(\sum_{\vec{\nu}\in\Lambda} |A_{j,\vec{\nu}} - \tilde{A}_{j,\vec{\nu}}| |z_{\vec{\nu}}|\bigg)^2 \\
    &\leq \sum_{j=1}^m \bigg(\sum_{\vec{\nu}\in\Lambda} \frac{1}{\sqrt{m}} \| \varphi_{\mathsf{w}, \vec{\nu}} - {\rm R}(\Psi^{\vec{\nu}}_{\varepsilon}) \|_{L^{\infty}([-1,1]^d)} |z_{\vec{\nu}}|\bigg)^2\\
    &= \bigg(\sum_{\vec{\nu}\in\Lambda} \| \varphi_{\mathsf{w}, \vec{\nu}} - {\rm R}(\Psi^{\vec{\nu}}_{\varepsilon}) \|_{L^{\infty}([-1,1]^d)} |z_{\vec{\nu}}|\bigg)^2\\
    &\leq \sum_{\vec{\nu}\in\Lambda}\| \varphi_{\mathsf{w}, \vec{\nu}} - {\rm R}(\Psi^{\vec{\nu}}_{\varepsilon}) \|_{L^{\infty}([-1,1]^d)}^2 \|\vec{z} \|_2^2 \\
    &\leq \texttt{\#}\Lambda \, B(d,n)^2 \varepsilon^2, 
\end{align*}
where we have used the Cauchy-Schwarz inequality in the third inequality and \eqref{eq:indSlepapp} in the fourth.
Therefore,
\begin{equation}\label{eq:MatricesDif}
    \| A - \tilde{A} \|_2 \leq \sqrt{\texttt{\#}\Lambda}\, B(d,n) \varepsilon.
\end{equation}
We have demonstrated in the proof of Theorem~\ref{thm:leastsquaresampling} that $\sigma_{\rm min}(A)> \sqrt{1-\delta}$ with probability at least $1-\beta$, as long as 
\begin{equation*}
    m \geq ((1-\delta)\log(1-\delta)+\delta)^{-1}(2\texttt{\#}\Lambda)^{2\gamma(\mathsf{w})} \log(\texttt{\#}\Lambda/\beta).
\end{equation*}
Hence, from \eqref{eq:Weyls}, \eqref{eq:MatricesDif}, we get 
\begin{equation} \label{eq:lowbdtildesigma}
    \sigma_{\rm min}(\tilde{A}) \geq \sigma_{\rm min}(A) - \|A - \tilde{A}\|_2
    \geq \sigma_{\rm min}(A) -\sqrt{\texttt{\#}\Lambda}\, B(d,n) \varepsilon
    > \sqrt{1- \delta} - \sqrt{\texttt{\#}\Lambda}\, B(d,n) \varepsilon,
\end{equation}
with the same probability.
Thus, by choosing $\varepsilon\in (0,1/(\sqrt{\texttt{\#}\Lambda}\,B(d,n)))$ additionally sufficiently small such that
\begin{equation*}
    \sqrt{\texttt{\#}\Lambda}\, B(d,n) \varepsilon \leq \frac{1}{2} \sqrt{1 - \delta},
\end{equation*}
which is \eqref{epscondition}, we further obtain from \eqref{eq:lowbdtildesigma}
\begin{equation} \label{eq:minsing}
   \sigma_{\rm min}(\tilde{A}) > \frac{1}{2} \sqrt{1 - \delta}.
\end{equation}
By applying the least squares approximation theory from Section~\ref{sec:problem} and Section~\ref{sec:leastsquaresproof}, specifically leveraging \eqref{lowbdalpha}, \eqref{punchline} and the lower bound given in \eqref{eq:minsing}, we conclude that \eqref{leastsquaresproblem2} has, with probability at least $1-\beta$, a unique solution $\Psi^{\natural}$ such that
\begin{equation*}    
    \| f - {\rm R}(\Psi^{\natural}) \|_{L_{ {\bf u}}^2([-1,1]^d)} 
    \leq 
    \Big(1+ \dfrac{2}{\sqrt{1-\delta}} \Big)\inf_{\Psi\in \mathcal{N}_{\Lambda,\mathsf{w},\varepsilon}} \| f - {\rm R}(\Psi) \|_{L^{\infty}([-1,1]^d)} 
    + \dfrac{2}{\sqrt{1-\delta}} \| \vec{e} \|_2.
\end{equation*}
Additionally, by \eqref{eq:approxg}, for any $\Psi\in\mathcal{N}_{\varepsilon}$ and any $g \in\mathcal{S}_{\Lambda}$,
\begin{align*}
    \| f - {\rm R}(\Psi) \|_{L^{\infty}([-1,1]^d)} 
    &\leq \| f - g\|_{L^{\infty}([-1,1]^d)} + \| g - {\rm R}(\Psi) \|_{L^{\infty}([-1,1]^d)} \\
    &\leq \| f - g\|_{L^{\infty}([-1,1]^d)} + \sqrt{\texttt{\#}\Lambda}\,B(d,n)\|g\|_{L^2_{\bf u}([-1,1]^d)}\varepsilon,
\end{align*}
and we arrive at \eqref{eq:PETFinal}. 
Finally, we note that the necessary upper bounds on the layers and weights for $\Psi^{\natural}$ have already been specified in \eqref{eq:LWSLambda}. \qed

\section{Conclusion}\label{s:conclusions}


We conclude by discussing some limitations of our work and indicating possible directions for future research. First, our theoretical results hold in dimensions $d=1,2, 3$. In principle, our arguments can be applied to higher dimensions, but the results would be affected by the curse of dimensionality. This arises due to the following reasons.
The sample complexity bound in $m$ of Theorem~\ref{thm:leastsquaresampling} relies on the estimation of an upper bound on $\kappa(S_{\Lambda})$, which is given in Proposition~\ref{prop:Theta}. This upper bound estimate follows from Proposition~\ref{prop:relating}, whose proof provides \textit{implicit} dimension-dependent bounds for the cardinalities of the slices of the hyperbolic cross index set.
An earlier version of the proof of Proposition~\ref{prop:relating} relied on the following estimate for the cardinality of the hyperbolic cross, for which an asymptotic version can be found in \citep{dobrovol1998number}.

\begin{lemma}[{See \citep[Theorem 3.5]{chernov2016new} and \citep[Lemma~B.3]{adcock2022sparse}}]
\label{lem:HCsize}
Let $d,n\in\N$, and suppose $\Lambda^{\rm HC}_{n-1}$ is the $d$-dimensional hyperbolic cross of order $n$. Then there exists $n_{\star}(d)\in\mathbb{N}$, depending only on $d$, such that if $n\geq n_{\star}(d)$, 
\begin{equation*}
    \texttt{\#} \Lambda^{\rm HC}_{n-1} < \frac{n(\log n + d\log 2)^{d-1}}{(d-1)!}.
\end{equation*}
\end{lemma}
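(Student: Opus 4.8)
The plan is to prove the bound by reformulating the cardinality as a divisor-type lattice count and inducting on the dimension $d$. Writing $m_k = \nu_k + 1 \ge 1$, the constraint $\prod_{k=1}^d(\nu_k+1)\le n$ becomes $\prod_{k=1}^d m_k \le n$, so that
\begin{equation*}
    \texttt{\#}\Lambda^{\rm HC}_{n-1} = D_d(n) := \texttt{\#}\Big\{(m_1,\dots,m_d)\in\N^d : \textstyle\prod_{k=1}^d m_k \le n\Big\}.
\end{equation*}
It is also convenient to record that $\log n + d\log 2 = \log(2^d n)$, so the target reads $D_d(n) < n(\log(2^d n))^{d-1}/(d-1)!$. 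First I would establish the recursion obtained by conditioning on the last coordinate $m_d = m$: once $m$ is fixed, the remaining factors must satisfy $m_1\cdots m_{d-1}\le \lfloor n/m\rfloor$, whence
\begin{equation*}
    D_d(n) = \sum_{m=1}^n D_{d-1}(\lfloor n/m\rfloor).
\end{equation*}
The base case $d=1$ is immediate, since $D_1(n)=n$ matches the right-hand side with equality, which is why the strict inequality becomes meaningful only for $d\ge 2$.

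For the inductive step I would substitute the hypothesis $D_{d-1}(N)\le N(\log N + (d-1)\log 2)^{d-2}/(d-2)!$ into the recursion and reduce to estimating a weighted harmonic-type sum. Bounding $\lfloor n/m\rfloor\le n/m$ and $\log\lfloor n/m\rfloor \le \log(n/m)$ and setting $a := \log n + (d-1)\log 2 = \log(2^{d-1}n)$, the step reduces to showing
\begin{equation*}
    \sum_{m=1}^n \frac{(a - \log m)^{d-2}}{m} < \frac{(a+\log 2)^{d-1}}{d-1}.
\end{equation*}
Since the summand $x\mapsto (a-\log x)^{d-2}/x$ is positive and decreasing on $[1,n]$ (its base equals $\log(2^{d-1}n/x)$, which stays positive there), I would bound the sum by its first term $a^{d-2}$ plus the integral $\int_1^n (a-\log x)^{d-2}/x\,{\rm d}x$, and evaluate the integral via the substitution $u=\log x$ to obtain $\big(a^{d-1}-((d-1)\log 2)^{d-1}\big)/(d-1)$. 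This produces the correct leading term $a^{d-1}/(d-1)$.

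The hard part is the constant. Comparing the resulting bound $(d-1)a^{d-2}+a^{d-1}-((d-1)\log 2)^{d-1}$ against the target $(a+\log 2)^{d-1}$, the binomial expansion shows the leading discrepancy is $(d-1)a^{d-2}$ on the left against only $(d-1)(\log 2)\,a^{d-2}$ gained on the right; since $\log 2<1$, the crude estimate $\lfloor n/m\rfloor\le n/m$ is in fact \emph{too lossy} for $d\ge 3$ and the argument fails for large $n$ (one checks directly that it does close for $d=2$, where the requirement is only $1<\log 4$). Thus the genuine obstacle is that the sharp additive constant $d\log 2$ cannot be obtained by discarding the floor: one must retain the integer part $\lfloor n/m\rfloor$—which is strictly and substantially smaller than $n/m$ for the many indices $m>\sqrt n$—in order to recover the missing $(1-\log 2)$-fraction of the $a^{d-2}$-order term.

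Carrying out this refinement—splitting the range of $m$, estimating $\sum_m \lfloor n/m\rfloor(\log\lfloor n/m\rfloor+(d-1)\log 2)^{d-2}$ with the floor retained, and absorbing the lower-order remainders (the first summand $a^{d-2}$, the subtracted $((d-1)\log 2)^{d-1}$ term, and the sum-versus-integral discrepancy) into the available slack—is precisely what forces a dimension-dependent threshold $n\ge n_\star(d)$ and delivers the strict inequality. This optimized bookkeeping is the argument carried out in \citep[Theorem~3.5]{chernov2016new} and \citep[Lemma~B.3]{adcock2022sparse}, which I would follow for the final constant.
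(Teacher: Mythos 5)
You should note at the outset that the paper contains no proof of this lemma: it is quoted directly from the literature, namely \citep[Theorem 3.5]{chernov2016new} and \citep[Lemma B.3]{adcock2022sparse}, with only the side remark that one may take $n_{\star}(2)=1$. Your proposal therefore cannot really diverge from ``the paper's argument''---and indeed your final step, deferring the sharp constant to exactly those references, mirrors what the paper itself does. The portions you do carry out are correct: the identification of $\texttt{\#}\Lambda^{\rm HC}_{n-1}$ with the divisor-type count $D_d(n)$, the recursion $D_d(n)=\sum_{m=1}^{n}D_{d-1}(\lfloor n/m\rfloor)$, the observation that $d=1$ gives equality (so the strict inequality is genuinely a $d\ge 2$ statement), the integral-test bound $\sum_{m=1}^n (a-\log m)^{d-2}/m \le a^{d-2} + \bigl(a^{d-1}-((d-1)\log 2)^{d-1}\bigr)/(d-1)$ with $a=\log(2^{d-1}n)$, and the conclusion that discarding the floors closes the induction precisely when $1<\log 4$, i.e.\ for $d=2$ but not for $d\ge 3$. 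I checked these computations, and the $d=2$ versus $d\ge 3$ dichotomy is real rather than an artifact of loose bookkeeping: the deficit $(d-1)(1-\log 2)\,a^{d-2}$ grows with $n$, so no threshold $n_{\star}(d)$ can rescue the crude bound in dimension three or higher.

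However, judged as a standalone proof, the proposal has a genuine gap, which you yourself flag: the inductive step for $d\ge 3$ is never executed. The assertion that retaining $\lfloor n/m\rfloor$ for the indices $m>\sqrt{n}$ recovers the missing $(1-\log 2)$-fraction of the $a^{d-2}$-order term is a heuristic, not an estimate; the entire difficulty of the cited results lies in making it quantitative and in extracting the dimension-dependent threshold $n_{\star}(d)$. So what you have established independently is the lemma for $d=2$ (with $n_{\star}(2)=1$, matching the paper's remark) together with a correct diagnosis of why the naive argument cannot reach $d=3$---which happens to be a dimension the paper cares about. That is useful and fully consistent with the paper's treatment, but a reader should understand that the lemma itself still rests on \citep{chernov2016new} and \citep{adcock2022sparse}, exactly as it does in the paper.
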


An estimate for $n_{\star}(d)$ can be derived, for example, from the calculations presented in the proof of \citep[Theorem~2.4]{chernov2016new}. In particular, we can take $n_{\star}(2)=1$.
However, for $d \geq 3$, $n_{\star}(d)$ is no longer guaranteed to be $1$. Moreover, \eqref{crucialinequality} may only hold above a separate threshold in $n$ for each $d$.
This prevents a straightforward inductive argument across dimensions that involves the intricate geometry of the hyperbolic cross in higher dimensions.

While Proposition~\ref{prop:relating} is integral to our proof strategy for circumventing the dimensional impact in the upper bound of $\kappa(\mathcal{S}_{\Lambda})$, the potential for the curse of dimensionality to emerge remains evident, as $\kappa(\mathcal{S}_{\Lambda}) \geq \|\varphi_{\mathsf{w},\vec{0}}\|_{L^{\infty}([-1,1]^d)}^2$, and the latter quantity is of order $\mathcal{O}(\mathsf{w}^d)$, by Proposition~\ref{prop:Phi_j}.
A potential fix to mitigate this problem is to consider a \emph{preconditioning scheme} (see \citep[Chapter 5]{adcock2022sparse}) where samples are drawn from an orthogonal measure on $[-1,1]^d$, alongside leveraging hyperbolic crosses of anisotropic type. 
Alternatively, recovery strategies based on compressed sensing (see \citep[Chapter 7]{adcock2022sparse} and the references therein) could be explored. These considerations fall outside the scope of this paper, but they present promising directions for future research.

The error bounds in Theorems~\ref{thm:leastsquaresampling} and \ref{thm:ApproxSpan} depend on the best approximation error of the target function $f$ with respect to the Slepian basis $\{\varphi_{\mathsf{w},\vec{\nu}}\}_{\vec{\nu}\in\Lambda}$. It is possible to obtain approximation rates that depend on the number of samples $m$ for specific function classes, similar to the approach in \citep{adcock2022deep} for analytic functions. 
To address this approximation-theoretical challenge, one would need to define a suitable class of (approximately) bandlimited functions for which the best approximation error $\inf_{g \in \mathcal{S}_\Lambda} \| f - g \|_{L^{\infty}([-1,1]^d)}$ can be explicitly estimated in terms of $\texttt{\#}\Lambda$. This type of bound is not readily available in the literature and so would need to be derived. Another interesting open question is whether the error bound in \eqref{eq:PETFinal} holds when the inf is taken over the neural networks in $\mathcal{N}_{\Lambda, \mathsf{w}, \varepsilon}$ as opposed to $\mathcal{S}_{\Lambda}$.

Additionally, as noted in Section~\ref{sec:main_results}, the NNs in the class $\mathcal{N}_{\Lambda, \mathsf{w}, \varepsilon}$ of Theorem~\ref{thm:ApproxSpan} are not fully trained like those considered in our numerical experiments. In fact, their first-to-second-last layers are explicitly constructed to emulate the Slepian basis, and the last layer is trained via least squares. Addressing this gap between theory and practice is an important direction for future work. 
One possible approach is given by the proof strategy of \citep[Theorem 3.2]{adcock2024optimal}, which shows that training problems based on any family of fully connected NNs possess uncountably many minimizers that achieve accurate approximations. 

Finally, our numerical experiments in dimensions one and two demonstrate that both reconstruction methods using least squares and deep learning 
improve as the number of training samples increases. 
Although least squares approximation is far more effective in dimension one, deep learning can match and slightly surpass it in dimension two.
However, identifying an NN architecture that consistently achieves superior performance across different benchmark functions is an open question that deserves a more comprehensive numerical investigation. Additionally, our experiments in Section~\ref{sec:initializations} show that a novel Slepian-based initialization---based on the construction of Theorem~\ref{thm:ApproxSpan}---can outperform standard random initializations when approximating a one-dimensional frequency localized test function. Carrying out a more extensive experimentation to see if this advantage can be seen for more functions and in higher dimensions is an interesting avenue of future work.

\section*{Acknowledgments}

AMN is supported by the Austrian Science Fund (FWF) Project P-37010. SB and JJB were partially supported by the Natural Sciences and Engineering Research Council of Canada (NSERC) through grants RGPIN-2020-06766 and RGPIN-2023-04244, respectively, and the Fonds de Recherche
du Qu\'ebec Nature et Technologies (FRQNT) through grants 313276, 340894 and 359708, respectively.

\bibliographystyle{plain}
\bibliography{biblio}

\appendix

\section{Basic neural network calculus} \label{appx:basicnetworks}

We first introduce a classical result that constructively demonstrates the existence of an NN for approximating products within a compact region, as detailed below.

\begin{proposition}[{\citep[Proposition 2.5]{opschoor2022exponential}}] \label{prop:Multiplication}
Let $\varepsilon \in (0,1)$, $B>0$, and $\times: \mathbb{R}^2 \to \R$ be defined by $\times(x,y)=xy$. 
Then, there exists a ReLU NN $\widetilde{\times}_{\varepsilon,B}$ such that ${\rm R}(\widetilde{\times}_{\varepsilon,B}): [-B,B]^2\to \mathbb{R}$ and that
\begin{equation*}
   \| {\rm R}(\widetilde{\times}_{\varepsilon,B})- \times \|_{L_{\bf u}^{\infty}([-B,B]^2)} \leq \varepsilon.
\end{equation*}
Moreover, there is a universal constant $C>0$ such that 
\begin{equation*}
   L (\widetilde{\times}_{\varepsilon,B}) \leq C(1+ \log_2(B/\varepsilon)) 
\quad\text{ and }\quad
    W (\widetilde{\times}_{\varepsilon,B}) \leq C(1+ \log_2(B/\varepsilon)).
\end{equation*}
\end{proposition}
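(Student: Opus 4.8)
The plan is to reduce the approximation of the bilinear map $\times(x,y)=xy$ to the approximation of the univariate squaring map $z\mapsto z^2$ via the polarization identity $xy=\tfrac14\big((x+y)^2-(x-y)^2\big)$, and to approximate squaring by the classical sawtooth construction. First I would record the telescoping identity for the square on $[0,1]$: let $g:[0,1]\to[0,1]$ be the tent map $g(z)=2\varrho(z)-4\varrho(z-\tfrac12)$ (so $g(z)=2z$ on $[0,\tfrac12]$ and $g(z)=2(1-z)$ on $[\tfrac12,1]$), realized exactly by a fixed-size ReLU layer, and let $g_i$ denote its $i$-fold self-composition. Then $z^2=z-\sum_{i=1}^{\infty}4^{-i}g_i(z)$ uniformly on $[0,1]$, with the truncation error satisfying $\big\|z^2-\big(z-\sum_{i=1}^{s}4^{-i}g_i(z)\big)\big\|_{L^\infty([0,1])}\le 4^{-(s+1)}$. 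Assembling the compositions $g_1,\dots,g_s$ and the affine read-out $\sum_{i=1}^{s}4^{-i}g_i$ through the concatenation/parallelization calculus of Appendix~\ref{appx:basicnetworks} (using the identity network of Proposition~\ref{prop:ApproxIdent} to pad depths when forming the linear combination via Proposition~\ref{prop:NNLinComb}) yields a squaring network $\Phi^{\rm sq}_{s}$ of depth and size both $O(s)$ achieving that error.

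Next I would pass from $[0,1]$ to $[-B,B]$ and from squaring to multiplication. For $z\in[-2B,2B]$ I would write $z^2=(2B)^2\,(|z|/(2B))^2$, where $|z|=\varrho(z)+\varrho(-z)$ is computed exactly by one ReLU layer and $|z|/(2B)\in[0,1]$; feeding this into $\Phi^{\rm sq}_{s}$ and scaling the output by $(2B)^2$ approximates $z\mapsto z^2$ on $[-2B,2B]$ with error $(2B)^2 4^{-(s+1)}$. Applying the polarization identity to the inputs $x\pm y\in[-2B,2B]$, combining the two squaring subnetworks by parallelization, and composing with the fixed affine map $\tfrac14(\,\cdot-\cdot\,)$ produces $\widetilde{\times}_{\varepsilon,B}$, with total error at most $2B^2 4^{-(s+1)}$. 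Choosing $s$ so that $2B^2 4^{-(s+1)}\le\varepsilon$, i.e.\ $s=O\big(\log_2(B/\varepsilon)\big)$, delivers the claimed accuracy; since the depth and size of $\Phi^{\rm sq}_{s}$ grow linearly in $s$ and the remaining operations (the $|\cdot|$ layers, the affine scalings, the parallelization, and the final read-out) contribute only fixed-size overhead, both $L(\widetilde{\times}_{\varepsilon,B})$ and $W(\widetilde{\times}_{\varepsilon,B})$ are $O\big(1+\log_2(B/\varepsilon)\big)$.

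The main obstacle is the error analysis of the squaring network: establishing the telescoping identity together with its exponential convergence rate $4^{-(s+1)}$, and then tracking how the prefactor $(2B)^2$ propagates so that the $B$-dependence enters only inside the logarithm. Concretely, one must verify $\log_2\!\big((2B)^2/\varepsilon\big)\le 2\log_2(B/\varepsilon)+O(1)$ for $\varepsilon\in(0,1)$ and $B\ge 1$, and handle the regime $B<1$ separately so that the rescaling inflates the depth and size bounds by only a constant factor absorbed into $C$ and the additive $1$. The accompanying network-calculus bookkeeping — aligning the two squaring subnetworks to equal depth before parallelizing, and checking that composition with the fixed affine maps does not disturb the asymptotics — is routine given the operations catalogued in Appendix~\ref{appx:basicnetworks}, but it is what must be carried out carefully to obtain the stated universal-constant bounds.
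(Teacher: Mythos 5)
The paper itself contains no proof of this proposition: it is imported verbatim, with attribution, from \citep[Proposition~2.5]{opschoor2022exponential}. So the relevant comparison is with the standard construction behind that citation (Yarotsky's sawtooth argument), which is exactly the route you take. Your reduction is the right one, and most of it is correct: the polarization identity $xy=\tfrac14\big((x+y)^2-(x-y)^2\big)$, the tent-map telescoping identity $z^2=z-\sum_{i\ge 1}4^{-i}g_i(z)$ on $[0,1]$ with truncation error $4^{-(s+1)}$, the rescaling via $|z|=\varrho(z)+\varrho(-z)$, the propagated error $2B^2 4^{-(s+1)}$, and the choice $s=O\big(\log_2(B/\varepsilon)\big)$ are all accurate.

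There is, however, a genuine gap in the complexity bookkeeping for the squaring network $\Phi^{\rm sq}_s$, and it breaks the stated weight bound. You propose to realize $\sum_{i=1}^{s}4^{-i}g_i$ by building the $s$ networks $g_1,\dots,g_s$ separately, padding them to common depth with Proposition~\ref{prop:ApproxIdent}, and combining them with Proposition~\ref{prop:NNLinComb}. This yields size $O(s^2)$, not $O(s)$: the $i$-fold composition $g_i$ alone costs $\Theta(i)$ weights (weights add under concatenation, Remark~\ref{rem:NNdefsprops}), so $\sum_{i=1}^s W(g_i)=\Theta(s^2)$, and the padding term $2\sum_{i}(L_{\max}-L_i)$ in Proposition~\ref{prop:NNLinComb} contributes another $\Theta(s^2)$. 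With $s\asymp\log_2(B/\varepsilon)$ your construction therefore gives $W(\widetilde{\times}_{\varepsilon,B})\le C\big(1+\log_2(B/\varepsilon)\big)^2$; the depth bound is as claimed, but the size bound is quadratically worse than the proposition asserts. The missing idea---and the one used in the cited proof---is to compute the partial sums in a single pass rather than in parallel: concatenate $s$ constant-size blocks, the $i$-th of which maps $\big(g_{i-1}(z),\,S_{i-1}(z)\big)\mapsto\big(g_i(z),\,S_i(z)\big)$, where $S_i:=z-\sum_{j\le i}4^{-j}g_j(z)$ is carried along as an extra channel. This channel is nonnegative on $[0,1]$ (the piecewise-linear interpolant of the convex function $z^2$ lies above it), so it passes through a single ReLU unit unchanged, and each block reuses $g_{i-1}$ instead of recomputing it from scratch. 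Each block then costs $O(1)$ weights, and concatenation gives depth and size both $O(s)$, which is what the stated linear-in-$\log_2(B/\varepsilon)$ bounds require.
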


Continuing, we present two foundational network operations; they are, concatenation, as described in Definition~\ref{def:concat}, and parallelization, as described in Definition~\ref{def:Parallelization}.

\begin{definition}[{\citep[Definition~2.2]{PetV2018OptApproxReLU}}]\label{def:concat}
Let $L_1, L_2 \in \N$ and suppose 
\begin{equation*}
    \Phi^1 = ((A_1^1,b_1^1), \dots, (A_{L_1}^1,b_{L_1}^1)) 
    \quad\text{ and }\quad
    \Phi^2 = ((A_1^2,b_1^2), \dots, (A_{L_2}^2,b_{L_2}^2))
\end{equation*}
are two NNs with the number of layers $L_1$ and $L_2$, respectively, such that the input layer of $\Phi^1$ has the same dimension as that of the output layer of $\Phi^2$. Then, $\Phi^1 \odot \Phi^2$ is an NN with $L_1+L_2-1$ layers such that
\begin{equation*}
    \Phi^1 \odot \Phi^2 := ((A_1^2,b_1^2), \dots, (A_{L_2-1}^2,b_{L_2-1}^2), (A_1^1A_{L_2}^2, A_1^1b_{L_2}^2+b_1^1) , (A_2^1,b_2^1), \dots, (A_{L_1}^1,b_{L_1}^1)),
\end{equation*}
called the concatenation of $\Phi^1$ and $\Phi^2$.  Moreover, ${\rm R}(\Phi^1 \odot \Phi^2) = {\rm R}(\Phi^1) \circ {\rm R}(\Phi^2)$. 
\end{definition}

\begin{definition}[{\citep[Definition~2.7]{PetV2018OptApproxReLU}}]\label{def:Parallelization} 
Let $d, L \in \N$ and suppose 
\begin{equation*}
    \Phi^1 = ((A_1^1,b_1^1), \dots, (A_{L}^1,b_{L}^1))
    \quad\text{ and }\quad
    \Phi^2 = ((A_1^2,b_1^2), \dots, (A_{L}^2,b_{L}^2))
\end{equation*}
are NNs both with $L$ layers and $d$-dimensional input.
Then, ${\rm P}(\Phi^1,\Phi^2)$ is an NN with $L$ layers such that
\begin{equation*}
    {\rm P}(\Phi^1,\Phi^2):= ((\tilde{A}_1,\tilde{b}_1), \dots, (\tilde{A}_L,\tilde{b}_L) )
\end{equation*}
where 
\begin{equation*}
\widetilde{A}_{1}:=\begin{pmatrix}
	A_{1}^{1} \\
	A_{1}^{2}  
\end{pmatrix},\quad
\widetilde{b}_{1}:=\begin{pmatrix}
	b_{1}^{1} \\
	b_{1}^{2}  
\end{pmatrix},
\,\text{ and }\,
\widetilde{A}_{l}:=\begin{pmatrix}
	A_{l}^{1} & 0 \\
	0 & A_{l}^{2}  
\end{pmatrix},\quad
\widetilde{b}_{l}:=\begin{pmatrix}
	b_{l}^{1} \\
	b_{l}^{2}  
\end{pmatrix},
\, \text{ for } \,
l= 2, \dots, L,
\end{equation*}
called the parallelization of $\Phi^1$ and $\Phi^2$. Moreover, ${\rm R}({\rm P}(\Phi^1,\Phi^2))= ({\rm R}(\Phi^1), {\rm R}(\Phi^2))$.
\end{definition}

The following observation is immediate from the two preceding definitions. 

\begin{remark} \label{rem:NNdefsprops}
It can be easily seen that
\begin{equation*} 
    L(\Phi^1 \odot \Phi^2) \leq L(\Phi^1) + L(\Phi^2), \quad\text{ and }\quad
    W(\Phi^1 \odot \Phi^2) \leq W(\Phi^1) + W(\Phi^2),
\end{equation*}
and that 
\begin{equation*} 
    L({\rm P}(\Phi^1, \Phi^2)) = L(\Phi^1) = L(\Phi^2),
    \quad\text{ and }\quad
    W({\rm P}(\Phi^1, \Phi^2)) = W(\Phi^1) + W(\Phi^2).
\end{equation*} 
It is also readily confirmed that concatenation and parallelization can be extended naturally to involve multiple NNs, such that
\begin{align*}
    \Phi^1\odot \Phi^2 \odot \Phi^3 &= (\Phi^1\odot \Phi^2)\odot \Phi^3 = \Phi^1\odot (\Phi^2 \odot \Phi^3), \\
    {\rm P}(\Phi^1,\Phi^2,\Phi^3) &= {\rm P}({\rm P}(\Phi^1,\Phi^2),\Phi^3) = {\rm P}(\Phi^1,{\rm P}(\Phi^2,\Phi^3)).
\end{align*}
\end{remark}

A key result, shown below, establishes that the identity function can be effectively approximated by an NN with any chosen number of layers.

\begin{proposition}[{\citep[Proposition~2.4]{FEMNNsPetersenSchwab}}] \label{prop:ApproxIdent} 
For every $d,L \in \N$, if $I: \R^d \to \R^d$ denotes the identity map $I(x) = x$ for all $x \in \R^d$, then there exists an NN $\Phi_I$ with input dimension $d$ and $L$ layers, such that ${\rm R}(\Phi_I)(x) = I(x) = x$ for all $x \in \R^d$ and $W(\Phi_I) \leq 2dL$.
\end{proposition}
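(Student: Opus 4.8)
The plan is to exploit the exact ReLU reconstruction of the identity on $\R$, namely $t = \varrho(t) - \varrho(-t)$ for every $t \in \R$, and to propagate this representation unchanged through all intermediate layers by relying on the fact that $\varrho$ acts as the identity on nonnegative inputs. Since the activation is applied only in layers $1,\dots,L-1$ while the last layer is affine (see \eqref{eq:NetworkScheme}), I would build $\Phi_I$ so that its first layer \emph{lifts} each input coordinate $x_i$ to the nonnegative pair $(\varrho(x_i),\varrho(-x_i))$, the intermediate layers \emph{carry} this $2d$-dimensional nonnegative vector forward verbatim, and the final affine layer \emph{projects} by subtracting the two halves to return $x$.

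Concretely, for $L \geq 2$ I would take
\begin{equation*}
    A_1 = \begin{pmatrix} I_d \\ -I_d \end{pmatrix} \in \R^{2d \times d}, \qquad b_1 = 0 \in \R^{2d},
\end{equation*}
so that $x_1 = \varrho(A_1 x) = (\varrho(x),\varrho(-x)) \in \R^{2d}$, reading the top (resp.\ bottom) block as the coordinatewise positive (resp.\ negative) parts of $x$. For $l = 2,\dots,L-1$ I would set $A_l = I_{2d}$ and $b_l = 0$; since every entry of $x_{l-1}$ is nonnegative, $\varrho(I_{2d}\, x_{l-1}) = x_{l-1}$, so the lifted representation is preserved. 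Finally I would set
\begin{equation*}
    A_L = \begin{pmatrix} I_d & -I_d \end{pmatrix} \in \R^{d \times 2d}, \qquad b_L = 0 \in \R^d,
\end{equation*}
giving $x_L = A_L x_{L-1} = \varrho(x) - \varrho(-x) = x$, i.e.\ ${\rm R}(\Phi_I) = I$. Counting nonzero entries via the definition of $W$, layer $1$ contributes $2d$, each of the $L-2$ intermediate layers contributes $2d$, layer $L$ contributes $2d$, and all biases vanish; summing gives $W(\Phi_I) = 2d + (L-2)\cdot 2d + 2d = 2dL$, meeting the stated bound with equality.

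The sole edge case to dispatch separately is $L = 1$, where the single layer is purely affine and I would simply take $A_1 = I_d$, $b_1 = 0$, so that ${\rm R}(\Phi_I)(x) = x$ with $W(\Phi_I) = d \leq 2d$. I do not expect a genuine obstacle here, as the construction is elementary. The one point requiring care---and the closest thing to a subtlety---is justifying that the intermediate identity layers truly preserve the lifted vector, which hinges precisely on the nonnegativity of the outputs of $\varrho$, so that reapplying $\varrho$ is the identity; this is exactly what lets me pad the depth to any prescribed $L$ without distorting the realized function or exceeding the size budget.
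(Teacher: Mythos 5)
Your construction is correct and is essentially the same argument as in the cited reference \citep[Proposition~2.4]{FEMNNsPetersenSchwab} (the paper itself only quotes the result): lift via $x \mapsto (\varrho(x),\varrho(-x))$, propagate the nonnegative vector through the hidden layers, and recombine with $\varrho(x)-\varrho(-x)=x$ in the final affine layer, which gives exactly $W(\Phi_I)=2dL$ for $L\geq 2$ and the trivial affine network for $L=1$.
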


We provide another key result, which, as a culmination of the previous definitions and proposition, demonstrates that NNs can be linearly combined.

\begin{proposition}[{\citep[Lemma 5.4]{petersen2024mathematical}}]
\label{prop:NNLinComb}
For $d, K \in \N$, let $\{\Phi_j\}_{j = 1}^K$ be a collection of NNs where
\begin{equation*}
    \Phi^j = ((A_1^j,b_1^j), \dots, (A_{L_j}^j,b_{L_j}^j))
\end{equation*}
where $A^j_i \in \R^{d^j_i \times d^j_{i-1}}$, for $i=1,\dots, L_j$ and assume $d^1_{L_1} = d^2_{L_2} = \dots = d^K_{L_K}$. For a set of scalars $\{\alpha_j\}_{j=1}^K$, there is an NN $\Phi_{\Sigma}$ such that 
\begin{equation*}
     {\rm R} (\Phi_{\Sigma}) 
     = \sum_{j=1}^K \alpha_j {\rm R} (\Phi_{j}), 
\end{equation*}
and that
\begin{equation*}
    W  (\Phi_{\Sigma}) \leq 2 \sum_{j=1}^K W (\Phi_{j}) + 2 \sum_{j=1}^K (L_{\rm max} - L_j)d_{L_j}^j
    \quad\text{ and }\quad
    L  (\Phi_{\Sigma}) = \max_{1\leq j \leq K} L (\Phi_{j}). 
\end{equation*}
where $L_{\rm max} := \max_{1\leq j \leq K} L (\Phi_{j}) = \max_{1\leq j \leq K} L_j$.
\end{proposition}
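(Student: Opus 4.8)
The plan is to assemble $\Phi_\Sigma$ in three stages: synchronize the depths of the constituent networks, parallelize them, and then fold the scalar combination into a single affine readout so that no depth is wasted. Since Definitions~\ref{def:concat} and \ref{def:Parallelization} together with Remark~\ref{rem:NNdefsprops} already supply the concatenation and parallelization calculus, the only genuinely new ingredient is the handling of unequal depths $L_1,\dots,L_K$.

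First I would set $L_{\max} = \max_{1\le j\le K} L_j$ and, for each $j$ with $L_j < L_{\max}$, pad $\Phi^j$ up to depth $L_{\max}$. Concretely, I would invoke Proposition~\ref{prop:ApproxIdent} on the common output dimension $d := d_{L_j}^j$ to obtain an identity network of $L_{\max}-L_j+1$ layers---recall that ReLU realizes the identity \emph{exactly} via $x = \varrho(x) - \varrho(-x)$---and concatenate it with $\Phi^j$ through Definition~\ref{def:concat}. Because concatenation merges one layer, the result $\tilde\Phi^j$ has exactly $L_{\max}$ layers and satisfies ${\rm R}(\tilde\Phi^j) = {\rm R}(\Phi^j)$; the padding adds $L_{\max}-L_j$ layers of width at most $2d$, which is precisely the source of the term $2\sum_j (L_{\max}-L_j)d_{L_j}^j$. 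Networks already at depth $L_{\max}$ are left untouched.

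Second, with all $\tilde\Phi^j$ now sharing depth $L_{\max}$ and a common output dimension, I would form ${\rm P}(\tilde\Phi^1,\dots,\tilde\Phi^K)$ via Definition~\ref{def:Parallelization}, whose realization is the stacked vector $({\rm R}(\Phi^1),\dots,{\rm R}(\Phi^K))$ and whose width is additive by Remark~\ref{rem:NNdefsprops}. Third, since the final layer of any network in Definition~\ref{def:NeuralNetworks} is purely affine, I would absorb the linear map $\vec z \mapsto \sum_j \alpha_j z_j$ directly into it: left-multiplying the block-diagonal last weight matrix and the stacked last bias by the block row $[\alpha_1 I \mid \cdots \mid \alpha_K I]$ yields a valid single affine layer realizing $\sum_j \alpha_j {\rm R}(\Phi^j)$, so that $L(\Phi_\Sigma) = L_{\max}$ with no extra layer incurred.

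I expect the main obstacle to be the width bookkeeping rather than any conceptual step. The two delicate points are (i) choosing the identity depths so that concatenation lands \emph{exactly} on $L_{\max}$ (the ``$+1$'' being consumed by the merged layer), and (ii) tracking how the nonzero counts of the weight matrices and bias vectors recombine when the parallelized output is collapsed by the readout, so as to match the stated factor of $2$ in $2\sum_j W(\Phi_j)$. Each step is routine in isolation; the care lies in threading the constants through so that the claimed bound on $W(\Phi_\Sigma)$ is met.
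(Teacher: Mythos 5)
Your pad--parallelize--absorb construction is correct and is essentially the same argument as in the paper, which states this result without proof by citing \citep[Lemma 5.4]{petersen2024mathematical}: one equalizes depths via Proposition~\ref{prop:ApproxIdent} and Definition~\ref{def:concat}, stacks via Definition~\ref{def:Parallelization}, and folds the scalars into the final affine layer so that $L(\Phi_\Sigma)=L_{\max}$ exactly. The only imprecision is in your point (ii): the factor $2$ in $2\sum_j W(\Phi_j)$ comes not from the readout collapse (which leaves nonzero counts unchanged, since $[\alpha_1 I\,|\,\cdots\,|\,\alpha_K I]$ times a block-diagonal matrix merely scales and rearranges blocks) but from the identity-padding concatenation, where the merged layer $\left(\begin{smallmatrix} I \\ -I \end{smallmatrix}\right) A^j_{L_j}$ doubles the nonzeros of each network's last layer---the coarse bounds $W(\Phi_I)\le 2dL$ and $W(\Phi^1\odot\Phi^2)\le W(\Phi^1)+W(\Phi^2)$ alone would leave an extra $2d^j_{L_j}$ per padded network that cannot in general be absorbed into the stated bound.
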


Our final result addresses the polynomial approximability of NNs and is derived from \citep[Proposition 2.11]{opschoor2022exponential}.

\begin{proposition} 
\label{prop:AppLeg1D} 
Let $\varepsilon\in (0,1)$ and $P_k$ be a normalized Legendre polynomial \eqref{eq:NormLegendr} of degree $k \in \N_0$. For each $k\in\N$ there exists an NN $\Phi^k_{\varepsilon}$ with input and output of dimension one satisfying
\begin{equation*}
    \| P_k  - {\rm R}(\Phi^k_{\varepsilon}) \|_{L^{\infty}([-1,1])} \leq \varepsilon,
\end{equation*}
and for $k=0$, an NN $\Phi^0_{\varepsilon}$ satisfying $P_0 = {\rm R}(\Phi^0_{\varepsilon}) = 1$ on $[-1,1]$. 
Moreover, there exists a universal constant $C>0$ 
such that for $k\in\N$
\begin{equation*}
    L(\Phi^k_{\varepsilon}) \leq C(1+ \log_2 k)(k+ \log_2 (1/\varepsilon))
    \quad\text{ and }\quad
    W(\Phi^k_{\varepsilon}) \leq Ck(k+\log_2(1/\varepsilon)),
\end{equation*}
and in the case $k=0$, $L(\Phi^0_{\varepsilon}) = 2$ and $W(\Phi^0_{\varepsilon}) = 1$.
\end{proposition}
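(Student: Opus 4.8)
The plan is to reduce Proposition~\ref{prop:AppLeg1D} to its source, \citep[Proposition~2.11]{opschoor2022exponential}, which provides ReLU NN emulations of the \emph{max-normalized} Legendre polynomials $\widetilde{P}_k$ (those with $\|\widetilde{P}_k\|_{L^{\infty}([-1,1])}=1$), and then to correct for the normalization factor relating $\widetilde{P}_k$ to $P_k$. Recall from \eqref{eq:NormLegendr} that $P_k=\sqrt{2k+1}\,\widetilde{P}_k$. For $k\in\N$, the cited result furnishes, for any tolerance $\delta\in(0,1)$, an NN $\widetilde{\Phi}^k_{\delta}$ with one-dimensional input and output satisfying $\|\widetilde{P}_k-{\rm R}(\widetilde{\Phi}^k_{\delta})\|_{L^{\infty}([-1,1])}\leq \delta$, together with depth and size bounds of the form $L(\widetilde{\Phi}^k_{\delta})\leq C(1+\log_2 k)(k+\log_2(1/\delta))$ and $W(\widetilde{\Phi}^k_{\delta})\leq Ck(k+\log_2(1/\delta))$. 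I would then define $\Phi^k_{\varepsilon}$ by absorbing the scalar $\sqrt{2k+1}$ into the final affine layer: if $\widetilde{\Phi}^k_{\delta}=((A_1,b_1),\dots,(A_{L},b_{L}))$, set $\Phi^k_{\varepsilon}:=((A_1,b_1),\dots,(\sqrt{2k+1}\,A_{L},\sqrt{2k+1}\,b_{L}))$, so that ${\rm R}(\Phi^k_{\varepsilon})=\sqrt{2k+1}\,{\rm R}(\widetilde{\Phi}^k_{\delta})$. Rescaling the last layer changes neither the number of layers nor the sparsity pattern of the weights, hence $L(\Phi^k_{\varepsilon})=L(\widetilde{\Phi}^k_{\delta})$ and $W(\Phi^k_{\varepsilon})=W(\widetilde{\Phi}^k_{\delta})$.

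Choosing $\delta=\varepsilon/\sqrt{2k+1}$ then gives
\[
\|P_k-{\rm R}(\Phi^k_{\varepsilon})\|_{L^{\infty}([-1,1])}=\sqrt{2k+1}\,\|\widetilde{P}_k-{\rm R}(\widetilde{\Phi}^k_{\delta})\|_{L^{\infty}([-1,1])}\leq \sqrt{2k+1}\,\delta=\varepsilon,
\]
which is the required accuracy. It then remains to verify that the depth and size bounds retain the stated form after substituting $\delta=\varepsilon/\sqrt{2k+1}$. The only new contribution is $\log_2(1/\delta)=\log_2(1/\varepsilon)+\tfrac12\log_2(2k+1)$, and since $\log_2(2k+1)\leq \log_2(3k)\leq C k$ for $k\in\N$, one has $k+\log_2(1/\delta)\leq C'(k+\log_2(1/\varepsilon))$. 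Plugging this into the bounds for $\widetilde{\Phi}^k_{\delta}$ and enlarging the universal constant accordingly yields $L(\Phi^k_{\varepsilon})\leq C(1+\log_2 k)(k+\log_2(1/\varepsilon))$ and $W(\Phi^k_{\varepsilon})\leq Ck(k+\log_2(1/\varepsilon))$, as claimed.

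Finally, the case $k=0$ is handled by direct construction, since $P_0=\sqrt{2\cdot 0+1}\,\widetilde{P}_0\equiv 1$ is constant and needs no approximation. Taking $\Phi^0_{\varepsilon}:=((0,0),(0,1))$, a two-layer network with $1\times 1$ zero weight matrices whose single nonzero entry sits in the last bias, gives ${\rm R}(\Phi^0_{\varepsilon})(x)=0\cdot\varrho(0\cdot x+0)+1=1=P_0$ on $[-1,1]$, with $L(\Phi^0_{\varepsilon})=2$ and $W(\Phi^0_{\varepsilon})=1$. I do not expect a substantive obstacle: the analytic content is entirely supplied by the cited emulation result, and the only points requiring care are the bookkeeping items above—confirming that the factor $\sqrt{2k+1}$ can be pushed into the output layer without affecting depth or sparsity, and that the resulting $\tfrac12\log_2(2k+1)$ term is dominated by the existing $k$ term and hence absorbable into the universal constant.
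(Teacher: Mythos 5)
The paper offers no proof of this proposition beyond stating that it is derived from \citep[Proposition~2.11]{opschoor2022exponential}, and your proposal follows exactly that route with correct bookkeeping: rescaling the output layer preserves both depth and sparsity, the tolerance $\delta=\varepsilon/\sqrt{2k+1}$ lies in $(0,1)$, the extra term $\tfrac12\log_2(2k+1)$ is dominated by $Ck$ and absorbed into the universal constant, and your $k=0$ network realizes the constant $1$ with $L=2$, $W=1$ as required. One minor caveat: in the cited source the univariate Legendre polynomials are already normalized in $L^2_{\bf u}([-1,1])$ (i.e., they coincide with $P_k$ rather than $\widetilde{P}_k$), so your rescaling step---though mathematically valid under your stated premise---is not actually needed, and the proposition follows by specializing the cited result to dimension one.
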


\end{document}